\documentclass[11pt,english]{extarticle}
\usepackage[]{fontenc}
\usepackage[latin9]{inputenc}
\usepackage[letterpaper]{geometry}
\geometry{verbose,tmargin=2.5cm,bmargin=3.5cm,lmargin=2.5cm,rmargin=2.5cm}
\usepackage{mathrsfs}
\usepackage{amsmath}
\usepackage{amsthm}
\usepackage{amssymb}
\usepackage{stackrel}
\usepackage{graphicx}
\usepackage{esint}

\makeatletter
  \theoremstyle{remark}
  \newtheorem{rem}{\protect\remarkname}
\theoremstyle{plain}
\newtheorem{thm}{\protect\theoremname}
  \theoremstyle{definition}
  \newtheorem{defn}{\protect\definitionname}
  \theoremstyle{plain}
  \newtheorem{lem}{\protect\lemmaname}
 \theoremstyle{definition}
  \newtheorem{example}{\protect\examplename}


\interdisplaylinepenalty=2500
\usepackage{dsfont}
\usepackage{relsize}
\usepackage{subdepth}
\allowdisplaybreaks

\providecommand{\definitionname}{Definition}
\providecommand{\examplename}{Example}

\providecommand{\lemmaname}{Lemma}
\providecommand{\remarkname}{Remark}
\providecommand{\theoremname}{Theorem}

\usepackage{url}

\DeclareFontFamily{OT1}{pzc}{}
\DeclareFontShape{OT1}{pzc}{m}{it}{<-> s * [1.200] pzcmi7t}{}
\DeclareMathAlphabet{\mathpzc}{OT1}{pzc}{m}{it}

\usepackage[noadjust]{cite}
\usepackage{filecontents}

\usepackage{stfloats}

\renewcommand\footnotemark{}

\@ifundefined{showcaptionsetup}{}{%
 \PassOptionsToPackage{caption=false}{subfig}}
\usepackage{subfig}
\makeatother

\usepackage[english]{babel}
  \providecommand{\definitionname}{Definition}
  \providecommand{\examplename}{Example}
  \providecommand{\lemmaname}{Lemma}
  \providecommand{\remarkname}{Remark}
\providecommand{\theoremname}{Theorem}

\begin{document}

\title{\textbf{Grid Based Nonlinear Filtering Revisited:}\\
\textbf{Recursive Estimation \& Asymptotic Optimality}}

\author{Dionysios S. Kalogerias\thanks{The Authors are with the Department of Elctrical \& Computer Engineering,
Rutgers, The State University of New Jersey, 94 Brett Rd, Piscataway,
NJ 08854, USA. e-mail: \{d.kalogerias, athinap\}@rutgers.edu.}\thanks{This work is supported by the National Science Foundation (NSF) under
Grants CCF-1526908 \& CNS-1239188.} and Athina P. Petropulu}

\date{April, 2016}

\maketitle
\textbf{\vspace{-30pt}}
\begin{abstract}
We revisit the development of grid based recursive approximate filtering of
general Markov processes in discrete time, partially observed in conditionally Gaussian
noise. The grid based filters considered rely on two types of state
quantization: The \textit{Markovian} type and the \textit{marginal}
type. We propose a set of novel, relaxed sufficient conditions, ensuring
strong and fully characterized pathwise convergence of these filters
to the respective MMSE state estimator. In particular, for marginal
state quantizations, we introduce the notion of \textit{conditional
regularity of stochastic kernels}, which, to the best of our knowledge,
constitutes the most relaxed condition proposed, under which asymptotic
optimality of the respective grid based filters is guaranteed. Further,
we extend our convergence results, including filtering of bounded
and continuous functionals of the state, as well as recursive approximate
state prediction. For both Markovian and marginal quantizations, the
whole development of the respective grid based filters relies more
on linear-algebraic techniques and less on measure theoretic arguments,
making the presentation considerably shorter and technically simpler.
\end{abstract}
\textbf{\textit{$\quad$}}\textbf{Keywords.} Nonlinear Filtering,
Grid Based Filtering, Approximate Filtering, Markov Chains, Markov
Processes, Sequential Estimation, Change of Probability Measures.

\textbf{\vspace{-21pt}
}

\section{Introduction}

It is well known that except for a few special cases \cite{Segall_Point1976,Marcus1979,Elliott1994Exact,Elliott1994_HowToCount,Daum2005},
general nonlinear filters of partially observable Markov processes
(or Hidden Markov Models (HMMs)) do not admit finite dimensional (recursive)
representations \cite{Segall1976,Elliott1994Hidden}. Nonlinear filtering
problems, though, arise naturally in a wide variety of important applications,
including target tracking \cite{Tracking_1_2002,Tracking_2_2004},
localization and robotics \cite{Roumeliotis_2000,Volkov_2015}, mathematical
finance \cite{OXFORD_Crisan_2011} and channel prediction in wireless
sensor networks \cite{KalPetChannelMarkov2014}, just to name a few.
Adopting the Minimum Mean Square Error (MMSE) as the standard optimality
criterion, in most cases, the nonlinear filtering problem results
in a dynamical system in the infinite dimensional space of measures,
making the need for robust approximate solutions imperative.

Approximate nonlinear filtering methods can be primarily categorized
into two major groups \cite{Chen_FILTERS_2003}: \textit{local} and
\textit{global}. Local methods include the celebrated extended Kalman
filter \cite{Elliott2010620}, the unscented Kalman filter \cite{UNSCENTED},
Gaussian approximations \cite{ItoXiong1997}, cubature Kalman filters
\cite{Cubature_2009} and quadrature Kalman filters \cite{Elliott_GAUSS}.
These methods are mainly based on the local ``assumed form of the
conditional density'' approach, which dates back to the 1960's \cite{Kushner1967_Approximations}.
Local methods are characterized by relatively small computational
complexity, making them applicable in relatively higher dimensional
systems. However, they are strictly suboptimal and, thus, they at
most constitute efficient heuristics, but \textit{without explicit
theoretical guarantees}. On the other hand, global methods, which
include grid based approaches (relying on proper quantizations of
the state space of the state process \cite{Pages2005optimal,Kushner2001_BOOK,Kushner2008})
and Monte Carlo approaches (particle filters and related methods \cite{PARTICLE2002tutorial}),
provide approximations to the \textit{whole posterior measure of the
state}. Global methods possess very powerful asymptotic optimality
properties, providing explicit theoretical guarantees and predictable
performance. For that reason, they are very important both in theory
and practice, either as solutions, or as benchmarks for the evaluation
of suboptimal techniques. The main common disadvantage of global methods
is their high computational complexity as the dimensionality of the
underlying model increases. This is true both for grid based and particle
filtering techniques \cite{Bengtsson2008Curse,Quang2010Insight,RebeschiniRamon2013_1,Rebeschini2014Nonlinear,Sellami_2008_Compare}.

In this paper, we focus on \textit{grid based approximate filtering
of Markov processes observed in conditionally Gaussian noise, constructed
by exploiting uniform quantizations of the state}. Two types of state
quantizations are considered: the \textit{Markovian} and the \textit{marginal
}ones (see \cite{Pages2005optimal} and/or Section \ref{sec:Uniform-State-Quantizations}).
Based on existing results \cite{Elliott1994Hidden,Chen_FILTERS_2003,Pages2005optimal},
one can derive grid based, recursive nonlinear filtering schemes,
exploitting the properties of the aforementioned types of state approximations.
The novelty of our work lies in the development of an original convergence
analysis of those schemes, under generic assumptions on the expansiveness
of the observations (see Section \ref{sec:Quant_Filtering}). Our
contributions can be summarized as follows:

\textbf{1)} For marginal state quantizations, we propose the notion
of \textit{conditional regularity of Markov kernels} (Definition 2),
which is an easily verifiable condition for guaranteeing strong asymptotic
consistency of the resulting grid based filter. Conditional regularity
is a simple and relaxed condition, in contrast to more complicated
and potentially stronger conditions found in the literature, such
as the Lipschitz assumption imposed on the stochastic kernel(s) of
the underlying process in \cite{Pages2005optimal}.

\textbf{2)} Under certain conditions, we show that all grid based
filters considered here converge to the true optimal nonlinear filter
in a strong and controllable sense (Theorems \ref{Markovian_Filter}
and \ref{OUR_FILTER2}). In particular, the convergence is compact
in time and uniform in a measurable set occurring with probability
almost $1$; this event is completely characterized in terms of the
filtering horizon and the dimensionality of the observations.

\textbf{3)} We show that all our results can be easily extended in
order to support filters of \textit{functionals} of the state and
recursive, grid based approximate prediction (Theorem 5). More specifically,
we show that grid based filters are asymptotically optimal as long
as the state functional is bounded and continuous; this is a typical
assumption (see also \cite{Elliott1994Hidden,Kushner2008,Crisan2002Survey}).
Of course, this latter assumption is in addition to and independent
from any other condition (e.g., conditional regularity) imposed on
the structure of the partially observable system under consideration.
In a companion paper \cite{KalPetChannelMarkov2014}, this simple
property has been proven particularly useful, in the context of channel
estimation in wireless sensor networks. The assumption of a bounded
and continuous state functional is more relaxed as compared to the
respective bounded and Lipschitz assumption found in \cite{Pages2005optimal}.

Another novel aspect of our contribution is that our original theoretical
development is based more on linear-algebraic arguments and less on
measure theoretic ones, making the presentation shorter, clearer and
easy to follow.

\textbf{\vspace{-20pt}
}

\subsection*{Relation to the Literature}

In this paper, conditional regularity is presented as a relaxed sufficient
condition for asymptotic consistency of discrete time grid based filters,
employing marginal state quantizations. Another set of conditions
ensuring asymptotic convergence of state approximations to optimal
nonlinear filters are the Kushner's local consistency conditions (see,
example, \cite{Kushner2008,Kushner2001_BOOK}). These refer to Markov
chain approximations for continuous time Gaussian diffusion processes
and the related standard nonlinear filtering problem. 

It is important to stress that, as it can be verified in Section IV,
the constraints which conditional regularity imposes on the stochastic
kernel of the hidden Markov process under consideration are \textit{general
and do not require} the assumption of any specific class of hidden
models. In this sense, conditional regularity is a \textit{nonparametric
condition} for ensuring convergence to the optimal nonlinear filter.
For example, hidden Markov processes driven by strictly non-Gaussian
noise are equally supported as their Gaussian counterparts, provided
the same conditions are satisfied, as suggested by conditional regularity
(see Section IV). Consequently, it is clear that conditional regularity
advocated in this paper is different in nature than Kushner's local
consistency conditions \cite{Kushner2008,Kushner2001_BOOK}. In fact,
putting the differences between continuous and discrete time aside,
conditional regularity is more general as well.

Convergence of discrete time approximate nonlinear filters (not necessarily
recursive) is studied in \cite{KalPetNonlinear_2015}. No special
properties of the state are assumed, such as the Markov property;
it is only assumed that the state is almost surely compactly supported.
In this work, the results of \cite{KalPetNonlinear_2015} provide
the tools for showing asymptotic optimality of grid base, \textit{recursive}
approximate estimators. Further, our results have been leveraged in
\cite{KalPetChannelMarkov2014,KalPet_SPAWC_2015}, showing asymptotic
consistency of sequential spatiotemporal estimators/predictors of
the magnitude of the wireless channel over a geographical region,
as well as its variance. The estimation is based on limited channel
observations, obtained by a small number of sensors.

The paper is organized as follows. In Section II, we define the system
model under consideration and formulate the respective filtering approximation
problem. In Section III, we present useful results on the asymptotic
characterization of the Markovian and marginal quantizations of the
state (Lemmata \ref{Marginal_Convergence} and \ref{Markovian_Convergence}).
Exploiting these results, Section IV is devoted to: \textbf{(a)} Showing
convergence of the respective (\textit{not necessarily finite dimensional})
grid based filters (Theorem \ref{CONVERGENCE_THEOREM_2}). \textbf{(b)}
Derivation of the respective recursive, asymptotically optimal filtering
schemes, based on the Markov property and any other conditions imposed
on the state (Theorem \ref{Markovian_Filter} and Lemmata \ref{Doob_Lemma-1}
and \ref{Convergence_FAKE}, leading to Theorem \ref{OUR_FILTER2}).
Extensions to the main results are also presented (Theorem 5), and
recursive filter performance evaluation is also discussed (Theorem
6). Some analytical examples supporting our investigation are discussed
in Section V, along with some numerical simulations. Finally, Section
VI concludes the paper.

\textit{Notation}: In the following, the state vector will be represented
as $X_{t}$, its innovative part as $W_{t}$ (if exists), its approximations
as $X_{t}^{L_{S}}$, and all other matrices and vectors, either random
or not, will be denoted by boldface letters (to be clear by the context).
Real valued random variables will be denoted by uppercase letters.
Calligraphic letters and formal script letters will denote sets and
$\sigma$-algebras, respectively. For any random variable (same for
vector) $Y$, $\sigma\left\{ Y\right\} $ will denote the $\sigma$-algebra
generated by $Y$. The essential supremum (with respect to some measure
- to be clear by the context) of a function $f\left(\cdot\right)$
over a set ${\cal A}$ will be denoted by $\mathrm{ess}\hspace{0.2em}\mathrm{sup}_{x\in{\cal A}}f\left(x\right)$.
The operators $\left(\cdot\right)^{\boldsymbol{T}}$, $\lambda_{min}\left(\cdot\right)$
and $\lambda_{max}\left(\cdot\right)$ will denote transposition,
minimum and maximum eigenvalue, respectively. The $\ell_{p}$-norm
of a vector $\boldsymbol{x}\in\mathbb{R}^{n}$ is $\left\Vert \boldsymbol{x}\right\Vert _{p}\triangleq\left(\sum_{i=1}^{n}\left|x\left(i\right)\right|^{p}\right)^{1/p}$,
for all naturals $p\ge1$. For any Euclidean space $\mathbb{R}^{N}$,
${\bf I}_{N}$ will denote the respective identity operator. For \textit{collections
of sets} $\left\{ {\cal A},{\cal B}\right\} $ and $\left\{ {\cal C},{\cal D}\right\} $,
the usual Cartesian product is overloaded by defining $\left\{ {\cal A},{\cal B}\right\} \times\left\{ {\cal C},{\cal D}\right\} \triangleq\left\{ {\cal A}\times{\cal C},{\cal A}\times{\cal D},{\cal B}\times{\cal C},{\cal B}\times{\cal D}\right\} $.
Additionally, we employ the identifications $\mathbb{R}_{+}\equiv\left[0,\infty\right)$,
$\mathbb{R}_{++}\equiv\left(0,\infty\right)$, $\mathbb{N}^{+}\equiv\left\{ 1,2,\ldots\right\} $,
$\mathbb{N}_{n}^{+}\equiv\left\{ 1,2,\ldots,n\right\} $ and $\mathbb{N}_{n}\equiv\left\{ 0\right\} \cup\mathbb{N}_{n}^{+}$,
for any positive natural $n$.

\section{\label{sec:Quant_Filtering}System Model \& Problem Formulation}

\subsection{System Model \& Technical Assumptions}

All stochastic processes defined below are defined on a common complete
probability space (the base space), defined by a triplet $\left(\Omega,\mathscr{F},{\cal P}\right)$.
Also, for a set ${\cal A}$, $\mathscr{B}\left({\cal A}\right)$ denotes
the respective Borel $\sigma$-algebra.

Let $X_{t}\in\mathbb{R}^{M\times1}$ be Markov with \textit{known}
\textit{dynamics} (stochastic kernel)\footnote{Hereafter, we employ the usual notation ${\cal K}_{t}\left({\cal A}\left|X_{t-1}\equiv\boldsymbol{x}\right.\right)\equiv{\cal K}_{t}\left(\left.{\cal A}\right|\boldsymbol{x}\right)$,
for ${\cal A}$ Borel.}
\begin{equation}
{\cal K}_{t}:\mathscr{B}\left(\mathbb{R}^{M\times1}\right)\times\mathbb{R}^{M\times1}\mapsto\left[0,1\right],\quad t\in\mathbb{N},
\end{equation}
which, together with an initial probability measure ${\cal P}_{X_{-1}}$
on $\left(\mathbb{R}^{M\times1},\mathscr{B}\left(\mathbb{R}^{M\times1}\right)\right)$,
completely describe its stochastic behavior. Generically, the state
is assumed to be compactly supported in $\mathbb{R}^{M\times1}$,
that is, for all $t\in\left\{ -1\right\} \cup\mathbb{N},X_{t}\in{\cal Z}\subset\mathbb{R}^{M\times1}$,
${\cal P}-a.s.$. We may also alternatively assume the existence of
an explicit state transition model describing the temporal evolution
of the state, as
\begin{equation}
X_{t}\triangleq f_{t}\left(X_{t-1},W_{t}\right)\in{\cal Z},\quad\forall t\in\mathbb{N},\label{eq:State_Equation}
\end{equation}
where, for each $t$, $f_{t}:{\cal Z}\times{\cal W}\overset{a.s.}{\mapsto}{\cal Z}$
constitutes a measurable nonlinear state transition mapping with somewhat
``favorable'' analytical behavior (see below) and $W_{t}\equiv W_{t}\left(\omega\right)\in{\cal W}\subseteq\mathbb{R}^{M_{W}\times1}$,
for $t\in\mathbb{N}$, $\omega\in\Omega$, denotes a white noise process
with state space ${\cal W}$. The recursion defined in \eqref{eq:State_Equation}
is initiated by choosing $X_{-1}\sim{\cal P}_{X_{-1}}$, independently
of $W_{t}$.

The state $X_{t}$ is partially observed through the \textit{conditionally
Gaussian} process
\begin{equation}
\mathbb{R}^{N\times1}\ni\left.{\bf y}_{t}\right|X_{t}\overset{i.i.d.}{\sim}{\cal N}\left(\boldsymbol{\mu}_{t}\left(X_{t}\right),\boldsymbol{\Sigma}_{t}\left(X_{t}\right)+\sigma_{\boldsymbol{\Sigma}}^{2}{\bf I}_{N}\right),\label{eq:Observation_Equation}
\end{equation}
$\sigma_{\boldsymbol{\Sigma}}\ge0$, with conditional means and variances
known apriori, for all $t\in\mathbb{N}$. Additionally, we assume
that $\boldsymbol{\Sigma}_{t}\left(X_{t}\right)\succ{\bf 0}$, with
$\boldsymbol{\Sigma}_{t}:{\cal Z}\mapsto{\cal D}_{\boldsymbol{\Sigma}}$,
for all $t\in\mathbb{N}$, where ${\cal D}_{\boldsymbol{\Sigma}}\subset\mathbb{R}^{N\times N}$
is bounded. The observations \eqref{eq:Observation_Equation} can
also be rewritten in the canonical form ${\bf y}_{t}\equiv\boldsymbol{\mu}_{t}\left(X_{t}\right)+\sqrt{{\bf C}_{t}\left(X_{t}\right)}\boldsymbol{u}_{t}$,
for all $t\in\mathbb{N}$, where $\boldsymbol{u}_{t}\equiv\boldsymbol{u}_{t}\left(\omega\right)$
constitutes a standard Gaussian white noise process and, for all $\boldsymbol{x}\in{\cal Z}$,
${\bf C}_{t}\left(\boldsymbol{x}\right)\triangleq\boldsymbol{\Sigma}_{t}\left(\boldsymbol{x}\right)+\sigma_{\boldsymbol{\Sigma}}^{2}{\bf I}_{N}$.
The process $\boldsymbol{u}_{t}$ is assumed to be mutually independent
of $X_{-1}$, and of the innovations $W_{t}$, in case $X_{t}\equiv f_{t}\left(X_{t-1},W_{t}\right)$.

The class of partially observable systems described above is very
wide, containing all (first order) Hidden Markov Models (HMMs) with
compactly supported state processes and conditionally Gaussian measurements.
Hereafter, without loss of generality and in order to facilitate the
presentation, we will assume stationarity of state transitions, dropping
the subscript ``$t$'' in the respective stochastic kernels and/or
transition mappings. However, we should mention that all subsequent
results hold true also for the nonstationary case, if one assumes
that any condition hereafter imposed on the mechanism generating $X_{t}$
holds for all $t\in\mathbb{N}$, that is, for all different ``modes''
of the state process. As in \cite{KalPetNonlinear_2015}, the following
additional technical assumptions are made. \textbf{\medskip{}
}

\noindent \textbf{Assumption 1:} \label{Elementary_Definition_1-1}\textbf{(Boundedness)}
The quantities $\lambda_{max}\left({\bf C}_{t}\left(\boldsymbol{x}\right)\right)$,
$\left\Vert \boldsymbol{\mu}_{t}\left(\boldsymbol{x}\right)\right\Vert _{2}$
are each uniformly upper bounded both with respect to $t\in\mathbb{N}$
and $\boldsymbol{x}\in{\cal X}$, with finite bounds $\lambda_{sup}$
and $\mu_{sup}$, respectively. For technical reasons, it is also
true that $\lambda_{inf}\triangleq\inf_{t\in\mathbb{N}}\inf_{\boldsymbol{x}\in{\cal X}}\lambda_{min}\left({\bf C}_{t}\left(\boldsymbol{x}\right)\right)>1$.
This can always be satisfied by normalization of the observations.
If $\boldsymbol{x}$ is substituted by the $X_{t}\left(\omega\right)$,
then all the above continue to hold almost everywhere.\textbf{\medskip{}
}

\noindent \textbf{Assumption 2:} \textbf{(Continuity \& Expansiveness)}
All members of the family $\left\{ \boldsymbol{\mu}_{t}:{\cal Z}\mapsto\mathbb{R}^{N\times1}\right\} _{t\in\mbox{\ensuremath{\mathbb{N}}}}$
are uniformly Lipschitz continuous on ${\cal Z}$ with respect to
the $\ell_{1}$-norm. Additionally, all members of the family $\left\{ \boldsymbol{\Sigma}_{t}:{\cal Z}\mapsto{\cal D}_{\boldsymbol{\Sigma}}\right\} _{t\in\mathbb{N}}$
are \textit{elementwise} uniformly Lipschitz continuous on ${\cal Z}$
with respect to the $\ell_{1}$-norm. If ${\cal Z}$ is regarded as
the essential state space of $X_{t}\left(\omega\right)$, then all
the above statements are understood essentially.
\begin{rem}
In certain applications, conditional Gaussianity of the observations
given the state may not be a valid modeling assumption. However, such
a structural assumption not only allows for analytical tractability
when it holds, but also provides important insights related to the
performance of the respective approximate filter, even if the conditional
distribution of the observations is not Gaussian, provided it is ``sufficiently
smooth and unimodal''.\hfill{}\ensuremath{\blacksquare}
\end{rem}

\subsection{Prior Results \& Problem Formulation}

Before proceeding and for later reference, let us define the complete
natural filtrations generated by the processes $X_{t}$ and ${\bf y}_{t}$
as $\left\{ \mathscr{X}_{t}\right\} _{t\in\mathbb{N}\cup\left\{ -1\right\} }$
and $\left\{ \mathscr{Y}_{t}\right\} _{t\in\mathbb{N}}$, respectively.

Adopting the MMSE as an optimality criterion for inferring the hidden
process $X_{t}$ on the basis of the observations, one would ideally
like to discover an efficient way for evaluating the conditional expectation
or \textit{filter} of the state, given the available information encoded
in $\mathscr{Y}_{t}$, sequentially in time. Unfortunately, except
for some very special cases, \cite{Segall_Point1976,Marcus1979,Elliott1994Exact,Elliott1994_HowToCount},
it is well known that the optimal nonlinear filter does not admit
an explicit finite dimensional representation \cite{Segall1976,Elliott1994Hidden}.

As a result, one must resort to properly designed approximations to
the general nonlinear filtering problem, leading to well behaved,
finite dimensional, approximate filtering schemes. Such schemes are
typically derived by approximating the desired quantities of interest
either heuristically (see, e.g. \cite{Kushner1967_Approximations,ItoXiong1997}),
or in some more powerful, rigorous sense, (see, e.g., Markov chain
approximations \cite{Kushner2001_BOOK,Kushner2008,Pages2005optimal},
or particle filtering techniques \cite{PARTICLE2002tutorial,Crisan2002Survey}).
In this paper, we follow the latter direction and propose a novel,
rigorous development of grid based approximate filtering, focusing
on the class of partially observable systems described in Section
\ref{sec:Quant_Filtering}.A. For this, we exploit the general asymptotic
results presented in \cite{KalPetNonlinear_2015}.

Our analysis is based on a well known representation of the optimal
filter, employing the simple concept (at least in discrete time) of
\textit{change of probability measures} (see, e.g., \cite{Elliott1994_HowToCount,Elliott1994Exact,Elliott1994Hidden,Elliott2005JUMP}).
Let $\mathbb{E}_{{\cal P}}\left\{ \left.X_{t}\right|\mathscr{Y}_{t}\right\} $
denote the filter of $X_{t}$ given $\mathscr{Y}_{t}$, under the
base measure ${\cal P}$. Then, there exists another (hypothetical)
probability measure $\widetilde{{\cal P}}$ \cite{Elliott1994Hidden,KalPetNonlinear_2015},
such that \textit{
\begin{equation}
\mathbb{E}_{{\cal P}}\left\{ \left.X_{t}\right|\mathscr{Y}_{t}\right\} \equiv\dfrac{\mathbb{E}_{\widetilde{{\cal P}}}\left\{ \left.X_{t}\Lambda_{t}\right|\mathscr{Y}_{t}\right\} }{\mathbb{E}_{\widetilde{{\cal P}}}\left\{ \left.\Lambda_{t}\right|\mathscr{Y}_{t}\right\} },\label{eq:Changed_Expectation}
\end{equation}
}where $\Lambda_{t}\triangleq\prod_{i\in\mathbb{N}_{t}}\mathsf{L}_{i}\left(X_{i},{\bf y}_{i}\right)$
and $\mathsf{L}_{t}\left(X_{t},{\bf y}_{t}\right)\triangleq\left(\sqrt{2\pi}\right)^{N}$
${\cal N}\left({\bf y}_{t};\boldsymbol{\mu}_{t}\left(X_{t}\right),{\bf C}_{t}\left(X_{t}\right)\right)$,
for all $t\in\mathbb{N}$, with ${\cal N}\left(\boldsymbol{x};\boldsymbol{\mu},{\bf C}\right)$
denoting the multivariate Gaussian density as a function of $\boldsymbol{x}$,
with mean $\boldsymbol{\mu}$ and covariance matrix ${\bf C}$. Here,
we also define $\Lambda_{-1}\equiv1$. The most important part is
that, under $\widetilde{{\cal P}}$, the processes $X_{t}$ (including
the initial value $X_{-1}$) and ${\bf y}_{t}$ are mutually statistically
independent, with $X_{t}$ being the same as under the original measure
and ${\bf y}_{t}$ being a Gaussian vector white noise process with
zero mean and covariance matrix the identity. As one might guess,
the measure $\widetilde{{\cal P}}$ is more convenient to work with.
It is worth mentioning that the \textit{Feynman-Kac formula} \eqref{eq:Changed_Expectation}
is true regardless of the nature of the state $X_{t}$, that is, it
holds even if $X_{t}$ is not Markov. In fact, the machinery of change
of measures can be applied to any nonlinear filtering problem and
is not tied to the particular filtering formulations considered in
this paper \cite{Elliott1994Hidden}.

Let us now replace $X_{t}$ in the RHS of \eqref{eq:Changed_Expectation}
with another process $X_{t}^{L_{S}}$, called the \textit{approximation},
with \textit{resolution }or\textit{ approximation parameter} $L_{S}\in\mathbb{N}$
(conventionally), also independent of the observations under $\widetilde{{\cal P}}$,
for which the evaluation of the resulting ``filter'' might be easier.
Then, we can define the \textit{approximate filter} of the state $X_{t}$
\begin{equation}
{\cal E}^{L_{S}}\left(\left.X_{t}\right|\mathscr{Y}_{t}\right)\triangleq\dfrac{\mathbb{E}_{\widetilde{{\cal P}}}\left\{ \left.X_{t}^{L_{S}}\Lambda_{t}^{L_{S}}\right|\mathscr{Y}_{t}\right\} }{\mathbb{E}_{\widetilde{{\cal P}}}\left\{ \left.\Lambda_{t}^{L_{S}}\right|\mathscr{Y}_{t}\right\} },\quad\forall t\in\mathbb{N}.\label{eq:CoM_Approx}
\end{equation}
It was shown in \cite{KalPetNonlinear_2015} that, under certain conditions,
this approximate filter is asymptotically consistent, as follows.

Hereafter, $\mathds{1}_{{\cal A}}:\mathbb{R}\rightarrow\left\{ 0,1\right\} $
denotes the indicator of ${\cal A}$. Given $x\in\mathbb{R}$ and
for any Borel ${\cal A}$, $\mathds{1}_{{\cal A}}\left(x\right)$
constitutes a Dirac (atomic) probability measure. Equivalently, we
write $\mathds{1}_{{\cal A}}\left(x\right)\equiv\delta_{x}\left({\cal A}\right)$.
Also, convergence in probability is meant to be with respect to the
$\ell_{1}$-norm of the random elements involved. Additionally, below
we refer to the concept to ${\cal C}$-weak convergence, which is
nothing but weak convergence \cite{BillingsleyMeasures} of conditional
probability distributions \cite{Berti2006,Grubel2014}. For a sufficient
definition, the reader is referred to \cite{KalPetNonlinear_2015}.
\begin{thm}
\label{CONVERGENCE_THEOREM}\textbf{\textup{(Convergence to the Optimal
Filter \cite{KalPetNonlinear_2015})}} Pick any natural $T<\infty$
and suppose either of the following:
\begin{itemize}
\item \noindent For all $t\in\mathbb{N}_{T}$, the sequence $\left\{ X_{t}^{L_{S}}\right\} _{L_{S}\in\mathbb{N}}$
is marginally ${\cal C}$-weakly convergent to $X_{t}$, given $X_{t}$,
that is,
\begin{equation}
{\cal P}_{\left.X_{t}^{L_{S}}\right|X_{t}}^{L_{S}}\left(\left.\cdot\right|X_{t}\right)\stackrel[L_{S}\rightarrow\infty]{{\cal W}}{\longrightarrow}\delta_{X_{t}}\left(\cdot\right),\quad\forall t\in\mathbb{N}_{T}.
\end{equation}

\item \noindent For all $t\in\mathbb{N}_{T}$, the sequence $\left\{ X_{t}^{L_{S}}\right\} _{L_{S}\in\mathbb{N}}$
is (marginally) convergent to $X_{t}$ in probability, that is,
\begin{equation}
X_{t}^{L_{S}}\stackrel[L_{S}\rightarrow\infty]{{\cal P}}{\longrightarrow}X_{t},\quad\forall t\in\mathbb{N}_{T}.
\end{equation}

\end{itemize}
Then, there exists a measurable subset $\widehat{\Omega}_{T}\subseteq\Omega$
with ${\cal P}$-measure at least $1-\left(T+1\right)^{1-CN}\exp\left(-CN\right)$,
such that
\begin{equation}
\sup_{t\in\mathbb{N}_{T}}\sup_{\omega\in\widehat{\Omega}_{T}}\left\Vert {\cal E}^{L_{S}}\left(\left.X_{t}\right|\mathscr{Y}_{t}\right)\hspace{-2pt}-\hspace{-2pt}\mathbb{E}_{{\cal P}}\left\{ \left.X_{t}\right|\mathscr{Y}_{t}\right\} \right\Vert _{1}\hspace{-2pt}\left(\omega\right)\hspace{-2pt}\underset{L_{S}\rightarrow\infty}{\longrightarrow}\hspace{-2pt}0,\label{eq:UAE}
\end{equation}
for any free, finite constant $C\ge1$. In other words, the convergence
of the respective approximate filtering operators is compact in $t\in\mathbb{N}$
and, with probability at least $1-\left(T+1\right)^{1-CN}\exp\left(-CN\right)$,
uniform in $\omega$.\end{thm}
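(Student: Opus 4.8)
The plan is to reduce the claim to the change--of--measure ratio \eqref{eq:CoM_Approx} and exploit the structural fact that, under $\widetilde{\mathcal{P}}$, the state $X$ and its approximation $X^{L_S}$ are each independent of the observation process $\mathbf{y}$. Because of this product structure, for every fixed observation path $\mathbf{y}_{0:t}\triangleq(\mathbf{y}_0,\dots,\mathbf{y}_t)$ the conditional expectations in \eqref{eq:Changed_Expectation} and \eqref{eq:CoM_Approx} are ordinary integrals against the law of $X_{0:t}$ (respectively $X_{0:t}^{L_S}$); writing $h_t(\mathbf{y}_{0:t})\triangleq\mathbb{E}_{\widetilde{\mathcal{P}}}\{X_t\prod_{i\in\mathbb{N}_t}\mathsf{L}_i(X_i,\mathbf{y}_i)\}$, $g_t(\mathbf{y}_{0:t})\triangleq\mathbb{E}_{\widetilde{\mathcal{P}}}\{\prod_{i\in\mathbb{N}_t}\mathsf{L}_i(X_i,\mathbf{y}_i)\}$ and $h_t^{L_S},g_t^{L_S}$ for the analogues with $X$ replaced by $X^{L_S}$, one has $\mathbb{E}_{\mathcal{P}}\{X_t|\mathscr{Y}_t\}=h_t(\mathbf{y}_{0:t})/g_t(\mathbf{y}_{0:t})$ and $\mathcal{E}^{L_S}(X_t|\mathscr{Y}_t)=h_t^{L_S}(\mathbf{y}_{0:t})/g_t^{L_S}(\mathbf{y}_{0:t})$. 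So it suffices to prove that (a) $h_t^{L_S}\to h_t$ and $g_t^{L_S}\to g_t$, uniformly over a suitable compact set of observation paths, and (b) the normalizers $g_t$ and $g_t^{L_S}$ are bounded away from $0$, uniformly in $t\le T$ and in $L_S$, on an event of the required probability.

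For (a), I would first note that both hypotheses force $X_i^{L_S}\to X_i$ in probability for every $i\in\mathbb{N}_T$: this is the second hypothesis verbatim, while under the first it follows from compactness of $\mathcal{Z}$, since expanding $\mathbb{E}_{\mathcal{P}}\{\|X_i^{L_S}-X_i\|_1^2\,|\,X_i\}$ and feeding the bounded continuous functions $\boldsymbol{x}\mapsto x(j)$ and $\boldsymbol{x}\mapsto\|\boldsymbol{x}\|_1^2$ into the $\mathcal{C}$--weak convergence of the conditional law of $X_i^{L_S}$ given $X_i$ towards $\delta_{X_i}$ yields $\mathbb{E}_{\mathcal{P}}\{\|X_i^{L_S}-X_i\|_1^2\,|\,X_i\}\to0$, hence $X_i^{L_S}\to X_i$ in $L^2(\mathcal{P})$. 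As $\mathcal{P}$ and $\widetilde{\mathcal{P}}$ are mutually absolutely continuous on each finite horizon, this convergence transfers to $\widetilde{\mathcal{P}}$. Joint convergence in probability of the finite family $\{X_i^{L_S}\}_{i\in\mathbb{N}_t}$, continuity of $\boldsymbol{x}\mapsto\mathsf{L}_i(\boldsymbol{x},\mathbf{y}_i)$ (Assumptions 1 and 2 make $\boldsymbol{\mu}_i,\boldsymbol{\Sigma}_i$ continuous and keep $\mathbf{C}_i\succ\mathbf{I}_N$), and uniform boundedness of $X,X^{L_S}\in\mathcal{Z}$ and of the factors $\mathsf{L}_i$, then give, for each fixed $\mathbf{y}_{0:t}$, $L^1(\widetilde{\mathcal{P}})$--convergence of $X_t^{L_S}\prod_{i\in\mathbb{N}_t}\mathsf{L}_i(X_i^{L_S},\mathbf{y}_i)$ to $X_t\prod_{i\in\mathbb{N}_t}\mathsf{L}_i(X_i,\mathbf{y}_i)$; that is, $h_t^{L_S}(\mathbf{y}_{0:t})\to h_t(\mathbf{y}_{0:t})$ and $g_t^{L_S}(\mathbf{y}_{0:t})\to g_t(\mathbf{y}_{0:t})$ pointwise in $\mathbf{y}_{0:t}$.

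The rest is where the probabilistic content sits. Set $\widehat{\Omega}_T\triangleq\{\omega:\|\mathbf{y}_t(\omega)\|_2\le r\text{ for all }t\in\mathbb{N}_T\}$ with $r=r(T,C,N)<\infty$ chosen so that the conditionally Gaussian tail estimate following from Assumption 1 — $\mathbf{y}_t=\boldsymbol{\mu}_t(X_t)+\sqrt{\mathbf{C}_t(X_t)}\,\boldsymbol{u}_t$ gives $\|\mathbf{y}_t\|_2\le\mu_{sup}+\sqrt{\lambda_{sup}}\,\|\boldsymbol{u}_t\|_2$ with $\|\boldsymbol{u}_t\|_2^2\sim\chi_N^2$ — yields $\mathcal{P}(\|\mathbf{y}_t\|_2>r)\le((T+1)\mathrm{e})^{-CN}$; a union bound over the $T+1$ times in $\mathbb{N}_T$ then gives $\mathcal{P}(\Omega\setminus\widehat{\Omega}_T)\le(T+1)^{1-CN}\exp(-CN)$. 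On $\widehat{\Omega}_T$: (i) since $\mathsf{L}_i(\boldsymbol{x},\mathbf{y})\ge\lambda_{sup}^{-N/2}\exp(-\tfrac12(\|\mathbf{y}\|_2+\mu_{sup})^2)$ for every $\boldsymbol{x}\in\mathcal{Z}$ (Assumption 1), the normalizers $g_t$ and $g_t^{L_S}$ are bounded below by a single constant $\kappa=\kappa(r,T)>0$, uniformly in $t\le T$ and $L_S$ (the approximations are supported in $\mathcal{Z}$ by construction); (ii) the families $\{h_t^{L_S}\}_{L_S}$, $\{g_t^{L_S}\}_{L_S}$ and their limits are uniformly Lipschitz in $\mathbf{y}_{0:t}$ on the compact box $\{\|\mathbf{y}_i\|_2\le r,\ i\in\mathbb{N}_t\}$ — the Gaussian density has $\mathbf{y}$--gradient bounded uniformly over $\|\boldsymbol{\mu}\|_2\le\mu_{sup}$, $\mathbf{C}\in\mathcal{D}_{\boldsymbol{\Sigma}}$, $\|\mathbf{y}\|_2\le r$, and products of uniformly bounded Lipschitz factors are Lipschitz — so by the Arzel\`{a}--Ascoli theorem the pointwise convergence of step (a) becomes uniform on that box, hence uniform over $\omega\in\widehat{\Omega}_T$. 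Finally, from
\[
\mathcal{E}^{L_S}\left(\left.X_t\right|\mathscr{Y}_t\right)-\mathbb{E}_{\mathcal{P}}\left\{\left.X_t\right|\mathscr{Y}_t\right\}=\frac{h_t^{L_S}-h_t}{g_t^{L_S}}+\frac{h_t}{g_t}\cdot\frac{g_t-g_t^{L_S}}{g_t^{L_S}},
\]
$\|h_t/g_t\|_1=\|\mathbb{E}_{\mathcal{P}}\{X_t|\mathscr{Y}_t\}\|_1\le\sup_{\boldsymbol{x}\in\mathcal{Z}}\|\boldsymbol{x}\|_1$ (as $X_t\in\mathcal{Z}$ a.s.), and (i), we get on $\widehat{\Omega}_T$ the bound $\|\mathcal{E}^{L_S}(X_t|\mathscr{Y}_t)-\mathbb{E}_{\mathcal{P}}\{X_t|\mathscr{Y}_t\}\|_1\le\kappa^{-1}\|h_t^{L_S}-h_t\|_1+\kappa^{-1}(\sup_{\boldsymbol{x}\in\mathcal{Z}}\|\boldsymbol{x}\|_1)|g_t^{L_S}-g_t|$; taking the supremum over the finitely many $t\in\mathbb{N}_T$ and over $\omega\in\widehat{\Omega}_T$ and letting $L_S\to\infty$ produces \eqref{eq:UAE} for any $C\ge1$.

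The step I expect to be the main obstacle is the combination just described: one must simultaneously produce a quantitative, horizon-- and dimension--aware lower bound on the approximate likelihood normalizers valid on an event whose probability matches exactly $1-(T+1)^{1-CN}\exp(-CN)$ — which is precisely what forces that functional form, via the $\chi_N^2$ tail — and upgrade the merely pointwise (in the observation path) convergence of numerators and denominators to convergence that is uniform over that event. The equicontinuity estimate and the calibration of the Gaussian tail are where the real work concentrates; the algebraic reduction and the passage from $\mathcal{C}$--weak to $L^2$ convergence are routine given the compact support of the state.
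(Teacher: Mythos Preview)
The paper does not actually prove Theorem~\ref{CONVERGENCE_THEOREM}; it is quoted verbatim from the companion work \cite{KalPetNonlinear_2015} and used as a black box. The only glimpse the present paper gives into that proof is in Appendix~C, where it is recalled that \cite{KalPetNonlinear_2015} establishes
\[
\inf_{t\in\mathbb{N}_T}\inf_{\omega\in\widehat\Omega_T}\inf_{L_S\in\mathbb{N}}\mathbb{E}_{\widetilde{\mathcal P}}\!\left\{\Lambda_t^{L_S}\,\middle|\,\mathscr{Y}_t\right\}(\omega)>0,
\]
with $\widehat\Omega_T$ the very set carrying the measure bound $1-(T+1)^{1-CN}e^{-CN}$. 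Your step~(b) reproduces exactly this ingredient, and your construction of $\widehat\Omega_T$ via a $\chi^2_N$ tail bound on $\|\boldsymbol u_t\|_2$ plus a union bound over $t\in\mathbb{N}_T$ is the natural (and presumably the intended) route to the specific functional form $(T+1)^{1-CN}e^{-CN}$.

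Your argument is sound. The reduction of the $\mathcal C$-weak hypothesis to convergence in probability is correct (bounded continuous test functions against a Dirac limit give $L^2$ convergence on the compact $\mathcal Z$); marginal convergence in probability for each $i\le t$ does imply joint convergence of the vector $(X_0^{L_S},\dots,X_t^{L_S})$; the factors $\mathsf L_i$ are bounded by $\lambda_{\inf}^{-N/2}\le 1$ and continuous in $\boldsymbol x$ under Assumptions~1--2, so bounded convergence yields the pointwise-in-$\mathbf y$ limits $h_t^{L_S}\to h_t$, $g_t^{L_S}\to g_t$; and the equicontinuity you invoke is genuine, since $\|\nabla_{\mathbf y}\mathsf L_i(\boldsymbol x,\mathbf y)\|\le \lambda_{\inf}^{-1}(\|\mathbf y\|_2+\mu_{\sup})\,\mathsf L_i\le \lambda_{\inf}^{-1}(r+\mu_{\sup})$ uniformly over $\boldsymbol x\in\mathcal Z$ and $\|\mathbf y\|_2\le r$, which passes through the expectation. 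The upgrade from pointwise to uniform convergence on the compact box via equicontinuity (your Arzel\`a--Ascoli remark) is the one step whose packaging the paper gives no hint about, but it is a standard argument and your outline is correct.

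In short: there is no ``paper's own proof'' to compare against here, but your proposal is a complete and correct sketch, fully consistent with the one structural fact the paper does cite from \cite{KalPetNonlinear_2015}.
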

\begin{rem}
It should be mentioned here that Theorem \ref{CONVERGENCE_THEOREM}
holds for \textit{any process} $X_{t}$, \textit{Markov or not}, as
long as $X_{t}$ is almost surely compactly supported.\hfill{}\ensuremath{\blacksquare}
\end{rem}

\begin{rem}
The mode of filter convergence reported in Theorem \ref{CONVERGENCE_THEOREM}
is particularly strong. It implies that \textit{inside any fixed finite
time interval and among almost all possible paths} of the observations
process, the approximation error between the true and approximate
filters is finitely bounded and converges to zero, as the grid resolution
increases, resulting in a practically appealing asymptotic property.
This mode of convergence constitutes, in a sense, a practically useful,
quantitative justification of Egorov's Theorem \cite{Richardson2009measure},
which abstractly relates almost uniform convergence with almost sure
convergence of measurable functions. Further, it is important to mention
that, for fixed $T$, convergence to the optimal filter tends to be
in the uniformly almost everywhere sense, \textit{at an exponential
rate} with respect to the dimensionality of the observations, $N$.
This shows that, in a sense, the dimensionality of the observations
\textit{stochastically stabilizes} the approximate filtering process.\hfill{}\ensuremath{\blacksquare}
\end{rem}

\begin{rem}
Observe that the adopted approach concerning construction of the approximate
filter of $X_{t}$, the approximation $X_{t}^{L_{S}}$ is naturally
constructed under the base measure $\widetilde{{\cal P}}$, satisfying
the constraint of being independent of the observations, ${\bf y}_{t}$.
However, it is easy to see that if, for each $t$ in the horizon of
interest, $X_{t}^{L_{S}}$\textit{ }is $\left\{ \mathscr{X}_{t}\right\} $-adapted,
then it may be defined under the original base measure ${\cal P}$
without any complication; under $\widetilde{{\cal P}}$, $X_{t}$
(and, thus, $X_{t}^{L_{S}}$) is independent of ${\bf y}_{t}$ by
construction. In greater generality, $X_{t}^{L_{S}}$ may be constructed
under ${\cal P}$, as long as it can be somehow guaranteed to follow
the same distribution \textit{and} be independent of ${\bf y}_{t}$
under $\widetilde{{\cal P}}$. As we shall see below, this is not
always obvious or true; if fact, it is strongly dependent on the information
(encoded in the appropriate $\sigma$-algebra) exploited in order
to define the process $X_{t}^{L_{S}}$, as well as the particular
choice of the alternative measure $\widetilde{{\cal P}}$.\hfill{}\ensuremath{\blacksquare}
\end{rem}

\section{\label{sec:Uniform-State-Quantizations}Uniform State Quantizations}

Although Theorem \ref{CONVERGENCE_THEOREM} presented above provides
the required conditions for convergence of the respective approximate
filter, it does not specify any specific class of processes to be
used as the required approximations. In order to satisfy either of
the conditions of Theorem \ref{CONVERGENCE_THEOREM}, $X_{t}^{L_{S}}$
must be strongly dependent on $X_{t}$. For example, if the approximation
is merely weakly convergent to the original state process (as, for
instance, in particle filtering techniques), the conditions of Theorem
\ref{CONVERGENCE_THEOREM} will not be fulfilled. In this paper, the
state $X_{t}$ is approximated by another closely related process
with discrete state space, constituting a uniformly quantized approximation
of the original one. 

Similarly to \cite{Pages2005optimal}, we will consider two types
of state approximations: \textit{Marginal Quantizations }and \textit{Markovian
Quantizations}. Specifically, in the following, we study pathwise
properties of the aforementioned state approximations. Nevertheless,
and as in every meaningful filtering formulation, neither the state
nor its approximations need to be known or constructed by the user.
Only the (conditional) laws of the approximations need to be known.
To this end, let us state a general definition of a quantizer.
\begin{defn}
\textbf{(Quantizers)} Consider a compact subset ${\cal A}\subset\mathbb{R}^{N}$,
a partition $\Pi\triangleq\left\{ {\cal A}_{i}\right\} _{i\in\mathbb{N}_{L}^{+}}$
of ${\cal A}$ and let ${\cal B}\triangleq\left\{ \left\{ b_{i}\right\} _{i\in\mathbb{N}_{L}^{+}}\right\} $
be a discrete set consisting of \textit{distinct reconstruction points},
with $b_{i}\in\mathbb{R}^{M},\forall i\in\mathbb{N}_{L}^{+}$. Then,
an \textit{$L$-level Euclidean Quantizer} is any bounded and measurable
function ${\cal Q}_{L}:\left({\cal A},\mathscr{B}\left({\cal A}\right)\right)\mapsto\left({\cal B},2^{{\cal B}}\right)$,
defined by assigning all $x\in{\cal A}_{i}\in\Pi,i\in\mathbb{N}_{L}^{+}$
to a unique $b_{j}\in{\cal B},j\in\mathbb{N}_{L}^{+}$, such that
the mapping between the elements of $\Pi$ and ${\cal B}$ is one
to one and onto (a bijection).
\end{defn}

\subsection{Uniformly Quantizing ${\cal Z}$}

For simplicity and without any loss of generality, suppose that ${\cal Z}\equiv\left[a,b\right]^{M}$
(for $a\in\mathbb{R}$ and $b\in\mathbb{R}$ with obviously $a<b$),
representing the compact set of support of the state $X_{t}$. Also,
consider a uniform $L$-set partition of the interval $\left[a,b\right]$,
$\Pi_{L}\triangleq\left\{ {\cal Z}_{l}\right\} _{l\in\mathbb{N}_{L-1}}$
and, additionally, let $\Pi_{L_{S}}\triangleq\underset{M\text{ times}}{\times}\Pi_{L}$
be the overloaded Cartesian product of $M$ copies of the partitions
defined above, with cardinality $L_{S}\triangleq L^{M}$. As usual,
our reconstruction points will be chosen as the center of masses of
the hyperrectangles comprising the hyperpartition $\Pi_{L_{S}}$,
denoted as $\boldsymbol{x}_{L_{S}}^{\left\{ l_{m}\right\} _{m\in\mathbb{N}_{M}^{+}}}\equiv\boldsymbol{x}_{L_{S}}^{\left\{ l_{m}\right\} }$,
where $l_{m}\in\mathbb{N}_{L-1}$. According to some predefined ordering,
we make the identification $\boldsymbol{x}_{L_{S}}^{\left\{ l_{m}\right\} }\equiv\boldsymbol{x}_{L_{S}}^{l}$,
$l\in\mathbb{N}_{L_{S}}^{+}$. Further, let ${\cal X}_{L_{S}}\triangleq\left\{ \boldsymbol{x}_{L_{S}}^{1},\boldsymbol{x}_{L_{S}}^{2},\ldots,\boldsymbol{x}_{L_{S}}^{L_{S}}\right\} $
and define the quantizer ${\cal Q}_{L_{S}}:\left({\cal Z},\mathscr{B}\left({\cal Z}\right)\right)\mapsto\left({\cal X}_{L_{S}},2^{{\cal X}_{L_{S}}}\right)$,
where\renewcommand{\arraystretch}{1.5}
\begin{equation}
\begin{array}{c}
{\cal Q}_{L_{S}}\left(\boldsymbol{x}\right)\triangleq\boldsymbol{x}_{L_{S}}^{\left\{ l_{m}\right\} }\equiv\boldsymbol{x}_{L_{S}}^{l}\in{\cal X}_{L_{S}}\\
\text{iff}\quad\boldsymbol{x}\in\underset{m\in\mathbb{N}_{M}^{+}}{\times}{\cal Z}_{l_{m}}\triangleq{\cal Z}_{L_{S}}^{l}\in\Pi_{L_{S}}
\end{array}.
\end{equation}
\renewcommand{\arraystretch}{1}Given the definitions stated above,
the following simple and basic result is true. The proof, being elementary,
is omitted.
\begin{lem}
\label{EQS_CONVERGENCE_1}\textbf{\textup{(Uniform Convergence of
Quantized Values)}} It is true that
\begin{equation}
\lim_{L_{S}\rightarrow\infty}\sup_{\boldsymbol{x}\in{\cal Z}}\left\Vert {\cal Q}_{L_{S}}\left(\boldsymbol{x}\right)-\boldsymbol{x}\right\Vert _{1}\equiv0,
\end{equation}
that is, ${\cal Q}_{L_{S}}\left(\boldsymbol{x}\right)$ converges
as $L_{S}\rightarrow\infty$, uniformly in $\boldsymbol{x}$.\end{lem}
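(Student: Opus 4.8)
The statement to prove is Lemma~\ref{EQS_CONVERGENCE_1}, which asserts uniform convergence of the quantizer values $\mathcal{Q}_{L_S}(\boldsymbol{x})$ to $\boldsymbol{x}$ over $\mathcal{Z}$. The plan is to exploit the explicitly uniform (equispaced) structure of the partition $\Pi_{L_S}$ and reduce everything to a one-dimensional mesh-size estimate, then lift it to $\mathbb{R}^M$ via the $\ell_1$-norm.

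\textbf{Step 1: The scalar mesh estimate.} First I would observe that the uniform $L$-set partition $\Pi_L$ of $[a,b]$ consists of subintervals each of length $(b-a)/L$ (or $(b-a)/(L-1)$, according to the indexing convention adopted in the paper; the constant is irrelevant). Since the reconstruction point for each subinterval is its center of mass — i.e., its midpoint — any $x$ lying in a given subinterval is within half the mesh size of that midpoint. Hence for each coordinate $m$, if $x(m) \in \mathcal{Z}_{l_m}$ then $|{\cal Q}_{L_S}(\boldsymbol{x})(m) - x(m)| \le (b-a)/(2L)$, a bound that is independent of the particular point $x(m)$ and of the index $l_m$.

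\textbf{Step 2: Aggregating coordinates in the $\ell_1$-norm.} Next I would use the product structure of the hyperpartition: $\boldsymbol{x} \in \mathcal{Z}_{L_S}^{l} = \times_{m} \mathcal{Z}_{l_m}$ means the $m$-th coordinate of $\boldsymbol{x}$ lies in $\mathcal{Z}_{l_m}$, and the $m$-th coordinate of ${\cal Q}_{L_S}(\boldsymbol{x})$ is the midpoint of $\mathcal{Z}_{l_m}$. Summing the per-coordinate bound from Step~1 gives
\begin{equation}
\left\Vert {\cal Q}_{L_S}(\boldsymbol{x}) - \boldsymbol{x} \right\Vert_1 = \sum_{m=1}^{M} \left| {\cal Q}_{L_S}(\boldsymbol{x})(m) - x(m) \right| \le M \cdot \frac{b-a}{2L},
\end{equation}
uniformly over all $\boldsymbol{x} \in \mathcal{Z}$. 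Taking the supremum over $\boldsymbol{x} \in \mathcal{Z}$ and then letting $L_S = L^M \to \infty$ (equivalently $L \to \infty$) drives the right-hand side to $0$, which is exactly the claim.

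\textbf{The main obstacle.} Honestly, there is no real obstacle here — this is why the paper calls the result "elementary" and omits the proof. The only points requiring mild care are bookkeeping ones: being consistent about whether the uniform partition of $[a,b]$ has $L$ or $L-1$ cells (the excerpt writes $\Pi_L \triangleq \{\mathcal{Z}_l\}_{l \in \mathbb{N}_{L-1}}$, so one should use the matching cell count and mesh size), and noting that "center of mass" of a hyperrectangle cell equals its geometric center so that the half-mesh bound is tight and valid. One should also remark that $\mathcal{Z}$ being compact is what makes $b - a < \infty$, so the bound is finite for every $L$; compactness is otherwise not needed since the estimate is already uniform by construction.
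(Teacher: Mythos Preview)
Your proposal is correct and is precisely the elementary argument the paper has in mind; indeed, the paper omits the proof entirely, and your bound $\left\Vert {\cal Q}_{L_S}(\boldsymbol{x}) - \boldsymbol{x} \right\Vert_1 \le M(b-a)/(2L)$ is exactly the estimate invoked later (e.g., in the proof of Lemma~\ref{Markovian_Convergence}).
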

\begin{rem}
We should mention here that Lemma \ref{EQS_CONVERGENCE_1}, as well
as all the results to be presented below hold equally well when the
support of $X_{t}$ is different in each dimension, or when different
quantization resolutions are chosen in each dimension, just by adding
additional complexity to the respective arguments.\hfill{}\ensuremath{\blacksquare}
\end{rem}

\subsection{Marginal Quantization}

The first class of state process approximations of interest is that
of marginal state quantizations, according to which $X_{t}$ is approximated
by its nearest neighbor
\begin{equation}
X_{t}^{L_{S}}\left(\omega\right)\triangleq{\cal Q}_{L_{S}}\left(X_{t}\left(\omega\right)\right)\in{\cal X}_{L_{S}},\quad\forall t\in\left\{ -1\right\} \cup\mathbb{N},
\end{equation}
${\cal P}-a.s.$, where $L_{S}\in\mathbb{N}$ is identified as the
approximation parameter. Next, we present another simple but important
lemma, concerning the behavior of the quantized stochastic process
$X_{t}^{L_{S}}\left(\omega\right)$, as $L_{S}$ gets large. Again,
the proof is relatively simple, and it is omitted.
\begin{lem}
\label{Marginal_Convergence}\textbf{\textup{(Uniform Convergence
of Marginal State Quantizations)}} For $X_{t}\left(\omega\right)\in{\cal Z}$,
for all $t\in\mathbb{N}$, almost surely, it is true that
\begin{equation}
\lim_{L_{S}\rightarrow\infty}\sup_{t\in\mathbb{N}}\hspace{0.2em}\underset{\omega\in\Omega}{\mathrm{ess}\hspace{0.2em}\mathrm{sup}}\left\Vert X_{t}^{L_{S}}\left(\omega\right)-X_{t}\left(\omega\right)\right\Vert _{1}\equiv0,
\end{equation}
that is, $X_{t}^{L_{S}}\left(\omega\right)$ converges as $L_{S}\rightarrow\infty$,
uniformly in $t$ and uniformly ${\cal P}$-almost everywhere in $\omega$.\end{lem}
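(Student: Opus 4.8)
The plan is to reduce the statement to the purely deterministic estimate already contained in Lemma \ref{EQS_CONVERGENCE_1}, exploiting that, by construction, $X_{t}^{L_{S}}$ is simply the image of $X_{t}$ under the \emph{fixed} quantizer ${\cal Q}_{L_{S}}$. First I would dispose of the exceptional set. By the standing hypothesis, for each $t\in\left\{ -1\right\} \cup\mathbb{N}$ there is a ${\cal P}$-null event ${\cal N}_{t}\in\mathscr{F}$ off which $X_{t}\left(\omega\right)\in{\cal Z}$; since this index set is countable, ${\cal N}\triangleq\bigcup_{t}{\cal N}_{t}$ is again ${\cal P}$-null, and for every $\omega\notin{\cal N}$ one has $X_{t}\left(\omega\right)\in{\cal Z}$ \emph{simultaneously} for all $t$.

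Next I would write down the pointwise bound. For every $\omega\notin{\cal N}$ and every $t\in\mathbb{N}$,
\begin{equation}
\left\Vert X_{t}^{L_{S}}\left(\omega\right)-X_{t}\left(\omega\right)\right\Vert _{1}=\left\Vert {\cal Q}_{L_{S}}\left(X_{t}\left(\omega\right)\right)-X_{t}\left(\omega\right)\right\Vert _{1}\le\sup_{\boldsymbol{x}\in{\cal Z}}\left\Vert {\cal Q}_{L_{S}}\left(\boldsymbol{x}\right)-\boldsymbol{x}\right\Vert _{1}\triangleq\delta_{L_{S}},
\end{equation}
where the quantity $\delta_{L_{S}}$ depends neither on $t$ nor on $\omega$. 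Taking the essential supremum over $\omega\in\Omega$ (which discards the null set ${\cal N}$) and then the supremum over $t\in\mathbb{N}$ leaves the right-hand side unchanged, so that
\begin{equation}
\sup_{t\in\mathbb{N}}\hspace{0.2em}\underset{\omega\in\Omega}{\mathrm{ess}\hspace{0.2em}\mathrm{sup}}\left\Vert X_{t}^{L_{S}}\left(\omega\right)-X_{t}\left(\omega\right)\right\Vert _{1}\le\delta_{L_{S}}.
\end{equation}
Finally I would invoke Lemma \ref{EQS_CONVERGENCE_1}, which states precisely that $\delta_{L_{S}}\rightarrow0$ as $L_{S}\rightarrow\infty$; letting $L_{S}\rightarrow\infty$ in the last display then gives the assertion.

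There is essentially no analytical obstacle here: the one point deserving care is the bookkeeping of the null set, i.e.\ that the almost-sure containment $X_{t}\in{\cal Z}$ must be taken \emph{simultaneously} over the time indices so that the essential supremum over $\omega$ can be dominated by a single bound that is uniform in $t$; countability of $\left\{ -1\right\} \cup\mathbb{N}$ makes this immediate, and uniformity in $t$ then comes for free because $\delta_{L_{S}}$ in Lemma \ref{EQS_CONVERGENCE_1} is a deterministic, time-independent quantity. Everything else is a one-line consequence of the uniform convergence of the quantizer.
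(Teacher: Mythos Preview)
Your proof is correct. The paper in fact omits the proof of this lemma, stating only that it ``is relatively simple, and it is omitted''; your argument is precisely the elementary reduction to Lemma~\ref{EQS_CONVERGENCE_1} that the paper evidently has in mind, and your careful handling of the countable union of null sets is the only point requiring any attention.
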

\begin{rem}
One drawback of marginal approximations is that they \textit{do not}
possess the Markov property any more. This fact introduces considerable
complications in the development of recursive estimators, as shown
later in Section \ref{sec:CORE_SECTION}. However, marginal approximations
are practically appealing, because they do not require explicit knowledge
of the stochastic kernel describing the transitions of $X_{t}$ \cite{KalPetChannelMarkov2014,KalPet_SPAWC_2015}.\hfill{}\ensuremath{\blacksquare}
\end{rem}

\begin{rem}
\label{rem:Property_EQUIV}Note that the implications of Lemma \ref{Marginal_Convergence}
continue to be true under the base measure $\widetilde{{\cal P}}$.
This is true because $X_{t}^{L_{S}}$ is $\left\{ \mathscr{X}_{t}\right\} $-adapted,
and also due to the fact that the ``local'' probability spaces $\left(\Omega,\mathscr{X}_{\infty},{\cal P}\right)$
and $\left(\Omega,\mathscr{X}_{\infty},\widetilde{{\cal P}}\right)$
are completely identical. Here, $\mathscr{X}_{\infty}\triangleq\sigma\left\{ \bigcup_{t\in\mathbb{N}\cup\left\{ -1\right\} }\mathscr{X}_{t}\right\} $
constitutes the join of the filtration $\left\{ \mathscr{X}_{t}\right\} _{t\in\mathbb{N}\cup\left\{ -1\right\} }$.
In other words, the restrictions of ${\cal P}$ and $\widetilde{{\cal P}}$
on $\mathscr{X}_{\infty}$ -the collection of events ever to be generated
by $X_{t}$- coincide; that is, $\left.{\cal P}\right|_{\mathscr{X}_{\infty}}\equiv\left.\widetilde{{\cal P}}\right|_{\mathscr{X}_{\infty}}$.\hfill{}\ensuremath{\blacksquare}
\end{rem}
\textbf{\vspace{-21pt}
}

\subsection{Markovian Quantization}

The second class of approximations considered is that of Markovian
quantizations of the state. In this case, we assume explicit knowledge
of a transition mapping, modeling the temporal evolution of $X_{t}$.
In particular, we assume a recursion as in \eqref{eq:State_Equation},
where the process $W_{t}$ acts as the driving noise of the state
$X_{t}$ and constitutes an intrinsic characteristic of it. Then,
the Markovian quantization of $X_{t}$ is defined as
\begin{equation}
X_{t}^{L_{S}}\triangleq{\cal Q}_{L_{S}}\left(f\left(X_{t-1}^{L_{S}},W_{t}\right)\right)\in{\cal X}_{L_{S}},\:\forall t\in\mathbb{N},\label{eq:Approx_Process}
\end{equation}
with $X_{-1}^{L_{S}}\hspace{-2pt}\equiv\hspace{-2pt}{\cal Q}_{L_{S}}\left(X_{-1}\right)\hspace{-2pt}\in\hspace{-2pt}{\cal X}_{L_{S}}$,
${\cal P}-a.s.$, and which satisfies the Markov property trivially;
since ${\cal X}_{L_{S}}$ is finite, it constitutes a (time-homogeneous)
\textit{finite state space Markov Chain}. A scheme for generating
$X_{t}^{L_{S}}$ is shown in Fig. \ref{fig:Markovian}.

At this point, it is very important to observe that, whereas $X_{t}$
is guaranteed to be Markov with the same dynamics and independent
of ${\bf y}_{t}$ under $\widetilde{{\cal P}}$, we cannot immediately
say the same for the Markovian approximation $X_{t}^{L_{S}}$. The
reason is that $X_{t}^{L_{S}}$ is measurable with respect to the
filtration generated by the initial condition $X_{-1}$ and the innovations
process $W_{t}$ and not with respect to $\left\{ \mathscr{X}_{t}\right\} _{t\in\mathbb{N}\cup\left\{ -1\right\} }$.
Without any additional considerations, $W_{t}$ may very well be partially
correlated relative to ${\bf y}_{t}$ and/or $X_{-1}$, and/or even
non white itself! Nevertheless, $\widetilde{{\cal P}}$ may be chosen
such that $W_{t}$ indeed satisfies the aforementioned properties
under question, as the following result suggests.
\begin{lem}
\label{P_Equivalence}\textbf{\textup{(Choice of }}$\widetilde{{\cal P}}$\textbf{\textup{)}}
Without any other modification, the base measure $\widetilde{{\cal P}}$
may be chosen such that the initial condition $X_{-1}$ and the innovations
process $W_{t}$ follow the same distributions as under ${\cal P}$
\textbf{and} are all mutually independent relative to the observations,
${\bf y}_{t}$.\end{lem}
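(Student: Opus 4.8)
The plan is to construct $\widetilde{{\cal P}}$ explicitly via a Radon--Nikodym derivative (density process) against ${\cal P}$, in the spirit of the classical Girsanov-type change of measure arguments used in discrete-time HMM theory (cf. \cite{Elliott1994Hidden}), but now keeping track of the innovations process $W_{t}$ rather than the state $X_{t}$ itself. First I would recall that, under ${\cal P}$, conditionally on $X_{t}$ the observation ${\bf y}_{t}$ is Gaussian with mean $\boldsymbol{\mu}_{t}(X_{t})$ and covariance ${\bf C}_{t}(X_{t})\succ{\bf 0}$, so the ``inverse likelihood'' factors
\begin{equation}
\overline{\mathsf{L}}_{t}\left(X_{t},{\bf y}_{t}\right)\triangleq\dfrac{{\cal N}\left({\bf y}_{t};{\bf 0},{\bf I}_{N}\right)}{{\cal N}\left({\bf y}_{t};\boldsymbol{\mu}_{t}\left(X_{t}\right),{\bf C}_{t}\left(X_{t}\right)\right)}
\end{equation}
are strictly positive, finite, $\mathscr{X}_{t}\vee\mathscr{Y}_{t}$-measurable random variables with $\mathbb{E}_{{\cal P}}\{\overline{\mathsf{L}}_{t}\,|\,\mathscr{X}_{\infty}\vee\mathscr{Y}_{t-1}\}=1$ (the last identity is just the normalization of a Gaussian density, using that ${\bf y}_{t}$ given $X_{t}$ is independent of the past). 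Then I would set $\overline{\Lambda}_{t}\triangleq\prod_{i\in\mathbb{N}_{t}}\overline{\mathsf{L}}_{i}$ with $\overline{\Lambda}_{-1}\equiv 1$, check that $\{\overline{\Lambda}_{t}\}$ is a positive $({\cal P},\mathscr{X}_{\infty}\vee\mathscr{Y}_{t})$-martingale with unit mean, and define $\widetilde{{\cal P}}$ on each $\mathscr{X}_{\infty}\vee\mathscr{Y}_{t}$ by $d\widetilde{{\cal P}}/d{\cal P}\triangleq\overline{\Lambda}_{t}$, extending to $\mathscr{X}_{\infty}\vee\mathscr{Y}_{\infty}$ by Kolmogorov/Carath\'eodory extension (finiteness of the horizon is not needed at this structural level, but one may restrict to $\mathbb{N}_{T}$ if desired).

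Next I would verify the three claimed properties. That the ${\cal P}$- and $\widetilde{{\cal P}}$-laws of $(X_{-1},\{W_{t}\})$ agree: since $\overline{\mathsf{L}}_{t}$ depends on the state only through $X_{t}=f_{t}(f_{t-1}(\cdots),W_{t})$, i.e.\ it is $\mathscr{X}_{\infty}$-measurable, and since $\overline{\Lambda}_{t}$ is a product of such factors times functions of ${\bf y}$, integrating out ${\bf y}_{t},\dots,{\bf y}_{0}$ successively (each conditional integral equal to $1$ by the normalization noted above) shows $\widetilde{{\cal P}}|_{\mathscr{X}_{\infty}}={\cal P}|_{\mathscr{X}_{\infty}}$; since $(X_{-1},\{W_{t}\})$ is $\mathscr{X}_{\infty}$-measurable (indeed $\sigma\{X_{-1},W_{0},W_{1},\dots\}\subseteq\mathscr{X}_{\infty}$ when $W_{t}$ is recoverable, and in any case the relevant restriction is the same), its joint law is preserved; in particular $X_{-1}\sim{\cal P}_{X_{-1}}$, $W_{t}$ stays white with its original marginals, and $X_{-1}\perp\!\!\!\perp\{W_{t}\}$ is retained. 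That ${\bf y}_{t}$ becomes i.i.d.\ ${\cal N}({\bf 0},{\bf I}_{N})$ and independent of $(X_{-1},\{W_{t}\})$ under $\widetilde{{\cal P}}$: compute, for bounded test functions, $\mathbb{E}_{\widetilde{{\cal P}}}\{g({\bf y}_{0:T})h(X_{-1},W_{0:T})\}=\mathbb{E}_{{\cal P}}\{\overline{\Lambda}_{T}\,g({\bf y}_{0:T})h\}$ and peel off the conditional expectations over ${\bf y}_{T},{\bf y}_{T-1},\dots$ one at a time, each time using that under ${\cal P}$, ${\bf y}_{t}\,|\,\mathscr{X}_{\infty}\vee\mathscr{Y}_{t-1}\sim{\cal N}(\boldsymbol{\mu}_{t}(X_{t}),{\bf C}_{t}(X_{t}))$ so that $\overline{\mathsf{L}}_{t}(X_{t},{\bf y}_{t})$ reweights this to the standard Gaussian; the result factorizes as $\prod_{t}\mathbb{E}\{g_{t}({\bf y}_{t})\}$ times $\mathbb{E}_{{\cal P}}\{h\}$, which is exactly mutual independence with the stated marginals.

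The main obstacle I expect is purely measure-theoretic bookkeeping rather than anything deep: one must be careful that $W_{t}$ is $\mathscr{X}_{\infty}$-measurable (or else argue directly on the canonical space carrying $(X_{-1},\{W_{t}\},\{{\bf y}_{t}\})$, which is cleaner), that the density process is genuinely a martingale so that the family $\{\widetilde{{\cal P}}|_{\mathscr{X}_{\infty}\vee\mathscr{Y}_{t}}\}$ is consistent and extends to a bona fide probability measure, and that all the conditional Gaussian normalizations are justified under Assumption 1 (which guarantees ${\bf C}_{t}\succeq\lambda_{inf}{\bf I}_{N}\succ{\bf 0}$ and $\lambda_{max}({\bf C}_{t})\le\lambda_{sup}<\infty$, hence $\overline{\mathsf{L}}_{t}$ and $\mathsf{L}_{t}$ are a.s.\ finite and strictly positive with finite conditional expectations). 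Since $\overline{\mathsf{L}}_{t}=1/\mathsf{L}_{t}$ up to the constant $(\sqrt{2\pi})^{N}$ already appearing in the paper's $\mathsf{L}_{t}$, the density used here is literally the reciprocal of the likelihood ratio $\Lambda_{t}$ from \eqref{eq:Changed_Expectation}, so consistency with the earlier Feynman--Kac representation is automatic. Once the construction is in place, ``without any other modification'' is immediate because we have changed nothing about the dynamics of $X_{t}$ or the conditional law of ${\bf y}_{t}$ given $X_{t}$ under ${\cal P}$; we have only selected which reference measure to integrate against.
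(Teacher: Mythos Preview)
Your approach is essentially the same as the paper's: both define $\widetilde{{\cal P}}$ via the reciprocal likelihood process $\overline{\Lambda}_{t}\equiv\Lambda_{t}^{-1}$ and then verify the three required properties (law of $X_{-1}$ preserved, $W_{t}$ white with same marginals, mutual independence from ${\bf y}_{t}$). The differences are only in execution.

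First, on the filtration: the paper commits from the outset to the \emph{enlarged} filtration $\{\mathscr{H}_{t}\}$ generated by $X_{-1}$, $\{W_{i}\}_{i\in\mathbb{N}_{t}}$, and $\{{\bf y}_{i}\}_{i\in\mathbb{N}_{t}}$, and applies the Radon--Nikodym construction there. You instead work on $\mathscr{X}_{\infty}\vee\mathscr{Y}_{t}$ and then flag the measurability of $W_{t}$ as an ``obstacle,'' deferring to a canonical-space argument. This is not merely bookkeeping: your primary line ``$\widetilde{{\cal P}}|_{\mathscr{X}_{\infty}}={\cal P}|_{\mathscr{X}_{\infty}}\Rightarrow$ law of $(X_{-1},\{W_{t}\})$ preserved'' does not go through unless $W_{t}$ is $\mathscr{X}_{\infty}$-measurable, which is not assumed. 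Your proposed fix is precisely the paper's choice; you should adopt it as the main argument rather than as an aside.

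Second, on the verification: the paper computes the conditional CDFs $\widetilde{{\cal P}}({\bf y}_{t}\le\alpha\,|\,\mathscr{H}_{t}^{-})$ and $\widetilde{{\cal P}}(W_{t}\le\alpha\,|\,\mathscr{H}_{t-1})$ one at a time via the forward Bayes formula, explicitly cancelling the Gaussian densities. Your factorization of $\mathbb{E}_{\widetilde{{\cal P}}}\{g({\bf y}_{0:T})h(X_{-1},W_{0:T})\}$ by sequential peeling is an equivalent but arguably cleaner packaging, since it delivers the marginals and the mutual independence simultaneously. Either route is fine; the underlying computation (each $\overline{\mathsf{L}}_{t}$ reweights the conditional law of ${\bf y}_{t}$ to ${\cal N}({\bf 0},{\bf I}_{N})$ while integrating to $1$) is identical.
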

\begin{proof}[Proof of Lemma \ref{P_Equivalence}]
See Appendix F.
\end{proof}
Lemma \ref{P_Equivalence} essentially implies that Markovian quantizations
may be constructed and analyzed either under ${\cal P}$ or $\widetilde{{\cal P}}$,
interchangeably. Also adapt Remark \ref{rem:Property_EQUIV} to this
case.

Under the assumption of a transition mapping, every possible path
of $X_{t}\left(\omega\right)$ is completely determined by fixing
$X_{-1}\left(\omega\right)$ and $W_{t}\left(\omega\right)$ at any
particular realization, for each $\omega\in\Omega$. As in the case
of marginal quantizations, the goal of the Markovian quantization
is the \textit{pathwise} approximation of $X_{t}$ by $X_{t}^{L_{S}}$,
for almost all realizations of the white noise process $W_{t}$ and
initial value $X_{-1}$. In practice, however, as noted in the beginning
of this section, knowledge of $W_{t}$ is of course not required by
the user. What is required by the user is the transition matrix of
the Markov chain $X_{t}^{L_{S}}$, which could be obtained via, for
instance, simulation (also see Section IV).

For analytical tractability, we will impose the following reasonable
regularity assumption on the expansiveness of the transition mapping
$f$$\left(\cdot,\cdot\right)$:
\begin{figure}
\centering\includegraphics[scale=1.2]{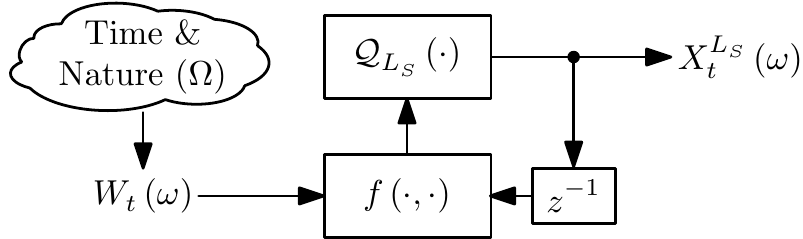} 

\caption{\label{fig:Markovian}Block representation of Markovian quantization.
As noted in the cloud, ``Nature'' here refers to the sample space
$\Omega$ of the base triplet $\left(\Omega,\mathscr{F},{\cal P}\right)$.}
\vspace{-11pt}
\end{figure}
\textbf{\medskip{}
}\\
\textbf{Assumption 3 (Expansiveness of Transition Mappings):} For
all $\boldsymbol{y}\in{\cal W}$, $f:{\cal Z}\times{\cal W}\mapsto{\cal Z}$
is \textit{Lipschitz continuous} in $\boldsymbol{x}\in{\cal Z}$,
that is, possibly dependent on each $\boldsymbol{y}$, there exists
a non-negative, bounded constant $K\left(\boldsymbol{y}\right)$,
where $\sup_{\boldsymbol{y}\in{\cal W}}K\left(\boldsymbol{y}\right)$
exists and is finite, such that
\begin{equation}
\left\Vert f\left(\boldsymbol{x}_{1},\boldsymbol{y}\right)-f\left(\boldsymbol{x}_{2},\boldsymbol{y}\right)\right\Vert _{1}\le K\left(\boldsymbol{y}\right)\left\Vert \boldsymbol{x}_{1}-\boldsymbol{x}_{2}\right\Vert _{1},
\end{equation}
\textbf{$\forall\left(\boldsymbol{x}_{1},\boldsymbol{x}_{2}\right)\in{\cal Z}\times{\cal Z}$}.
If, additionally, $\sup_{\boldsymbol{y}\in{\cal W}}K\left(\boldsymbol{y}\right)<1$,
then $f$$\left(\cdot,\cdot\right)$ will be referred to as \textit{uniformly
contractive}. \textbf{\medskip{}
}

Employing Assumption 3, the next result presented below characterizes
the convergence of the Markovian state approximation $X_{t}^{L_{S}}$
to the true process $X_{t}$, as the quantization of the state space
${\cal Z}$ gets finer and under appropriate conditions.
\begin{lem}
\label{Markovian_Convergence}\textbf{\textup{(Uniform Convergence
of Markovian State Quantizations)}} Suppose that the transition mapping
$f:{\cal Z}\times{\cal W}\mapsto{\cal Z}$ of the Markov process $X_{t}\left(\omega\right)$
is Lipschitz, almost surely and for all $t\in\mathbb{N}$. Also, consider
the approximating Markov process $X_{t}^{L_{S}}\left(\omega\right)$,
as defined in \eqref{eq:Approx_Process}. Then,
\begin{equation}
\lim_{L_{S}\rightarrow\infty}\underset{\omega\in\Omega}{\mathrm{ess}\hspace{0.2em}\mathrm{sup}}\left\Vert X_{t}^{L_{S}}\left(\omega\right)-X_{t}\left(\omega\right)\right\Vert _{1}\equiv0,\quad\forall t\in\mathbb{N},
\end{equation}
that is, $X_{t}^{L_{S}}\left(\omega\right)$ converges as $L_{S}\rightarrow\infty$,
in the pointwise sense in $t$ and uniformly almost everywhere in
$\omega$. If, additionally, $f\left(\cdot,\cdot\right)$ is uniformly
contractive, almost surely and for all $t\in\mathbb{N}$, then it
is true that
\begin{equation}
\lim_{L_{S}\rightarrow\infty}\sup_{t\in\mathbb{N}}\hspace{0.2em}\underset{\omega\in\Omega}{\mathrm{ess}\hspace{0.2em}\mathrm{sup}}\left\Vert X_{t}^{L_{S}}\left(\omega\right)-X_{t}\left(\omega\right)\right\Vert _{1}\equiv0,
\end{equation}
that is, the convergence is additionally uniform in $t$.\end{lem}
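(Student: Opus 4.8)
The plan is to work pathwise and reduce everything to a deterministic linear recursion for the per-step error. Fix $\omega$ outside the ${\cal P}$-null set on which the compact support of $X_{t}$ or the Lipschitz property of $f$ may fail (for all $t\in\mathbb{N}$ simultaneously), and abbreviate $e_{t}^{L_{S}}\triangleq\left\Vert X_{t}^{L_{S}}\left(\omega\right)-X_{t}\left(\omega\right)\right\Vert _{1}$. Set $\delta_{L_{S}}\triangleq\sup_{\boldsymbol{x}\in{\cal Z}}\left\Vert {\cal Q}_{L_{S}}\left(\boldsymbol{x}\right)-\boldsymbol{x}\right\Vert _{1}$, which tends to $0$ as $L_{S}\rightarrow\infty$ by Lemma \ref{EQS_CONVERGENCE_1}, and $\overline{K}\triangleq\sup_{\boldsymbol{y}\in{\cal W}}K\left(\boldsymbol{y}\right)<\infty$ by Assumption 3. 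Since $X_{-1}\left(\omega\right)\in{\cal Z}$, the initialization gives immediately $e_{-1}^{L_{S}}=\left\Vert {\cal Q}_{L_{S}}\left(X_{-1}\left(\omega\right)\right)-X_{-1}\left(\omega\right)\right\Vert _{1}\le\delta_{L_{S}}$.

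For the inductive step I would add and subtract $f\left(X_{t-1}^{L_{S}}\left(\omega\right),W_{t}\left(\omega\right)\right)$ inside the norm defining $e_{t}^{L_{S}}$. Because $f$ maps ${\cal Z}\times{\cal W}$ into ${\cal Z}$ and $X_{t-1}^{L_{S}}\left(\omega\right)\in{\cal X}_{L_{S}}\subset{\cal Z}$, the point $f\left(X_{t-1}^{L_{S}}\left(\omega\right),W_{t}\left(\omega\right)\right)$ lies in ${\cal Z}$, so the quantization term is at most $\delta_{L_{S}}$; the remaining term is bounded, via Assumption 3, by $K\left(W_{t}\left(\omega\right)\right)\left\Vert X_{t-1}^{L_{S}}\left(\omega\right)-X_{t-1}\left(\omega\right)\right\Vert _{1}\le\overline{K}\,e_{t-1}^{L_{S}}$. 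The triangle inequality then gives the recursion $e_{t}^{L_{S}}\le\delta_{L_{S}}+\overline{K}\,e_{t-1}^{L_{S}}$, valid for every $t\in\mathbb{N}$.

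Unrolling from $e_{-1}^{L_{S}}\le\delta_{L_{S}}$ produces $e_{t}^{L_{S}}\le\delta_{L_{S}}\sum_{j=0}^{t+1}\overline{K}^{\,j}$ for every $t\in\mathbb{N}$. For fixed $t$ the right-hand side is a deterministic constant times $\delta_{L_{S}}$, hence vanishes as $L_{S}\rightarrow\infty$; since that bound does not depend on $\omega$, it passes verbatim to the essential supremum over $\omega$, which is the first assertion. When $f$ is uniformly contractive we have $\overline{K}<1$, so $\sum_{j=0}^{t+1}\overline{K}^{\,j}\le(1-\overline{K})^{-1}$ uniformly in $t$, whence $\sup_{t\in\mathbb{N}}e_{t}^{L_{S}}\le\delta_{L_{S}}/(1-\overline{K})$; taking the essential supremum over $\omega$ and letting $L_{S}\rightarrow\infty$ yields the second assertion.

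I expect the only delicate point to be the uniformity in $t$: the partial geometric sum $\sum_{j=0}^{t+1}\overline{K}^{\,j}$ is bounded independently of $t$ precisely when $\overline{K}<1$, which is exactly the role played by the contraction hypothesis; without it the bound grows like $\overline{K}^{\,t}$ and only the pointwise-in-$t$ statement can be salvaged. The remaining bookkeeping — that the Lipschitz and support conditions are used off a single fixed ${\cal P}$-null set, uniformly in $t$, and that the per-path bounds are deterministic so that passing to $\mathrm{ess}\,\mathrm{sup}_{\omega}$ is immediate — is routine.
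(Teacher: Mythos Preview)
Your argument is correct and is essentially the same as the paper's: both write the Markovian quantization as $f(X_{t-1}^{L_S},W_t)$ plus a quantization error bounded by the mesh size, apply the Lipschitz estimate to get a linear recursion for $e_t^{L_S}$, unroll it into a (partial) geometric series, and observe that the resulting bound is deterministic so the essential supremum over $\omega$ is immediate. The only cosmetic difference is that you pass to the global constant $\overline{K}=\sup_{\boldsymbol{y}\in\mathcal{W}}K(\boldsymbol{y})$ at the outset, whereas the paper carries the pathwise constants $K(W_i(\omega))$ through the iteration before taking suprema; your version is slightly cleaner but equivalent.
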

\begin{proof}[Proof of Lemma \ref{Markovian_Convergence}]
See Appendix A.
\end{proof}
Especially concerning temporally uniform convergence of the quantization
schemes under consideration, and to highlight its great practical
importance, it would be useful to illustrate the implications of Lemmata
\ref{Marginal_Convergence} and \ref{Markovian_Convergence} by means
of the following simple numerical example.
\begin{figure*}
\centering\subfloat[]{\includegraphics[scale=0.57]{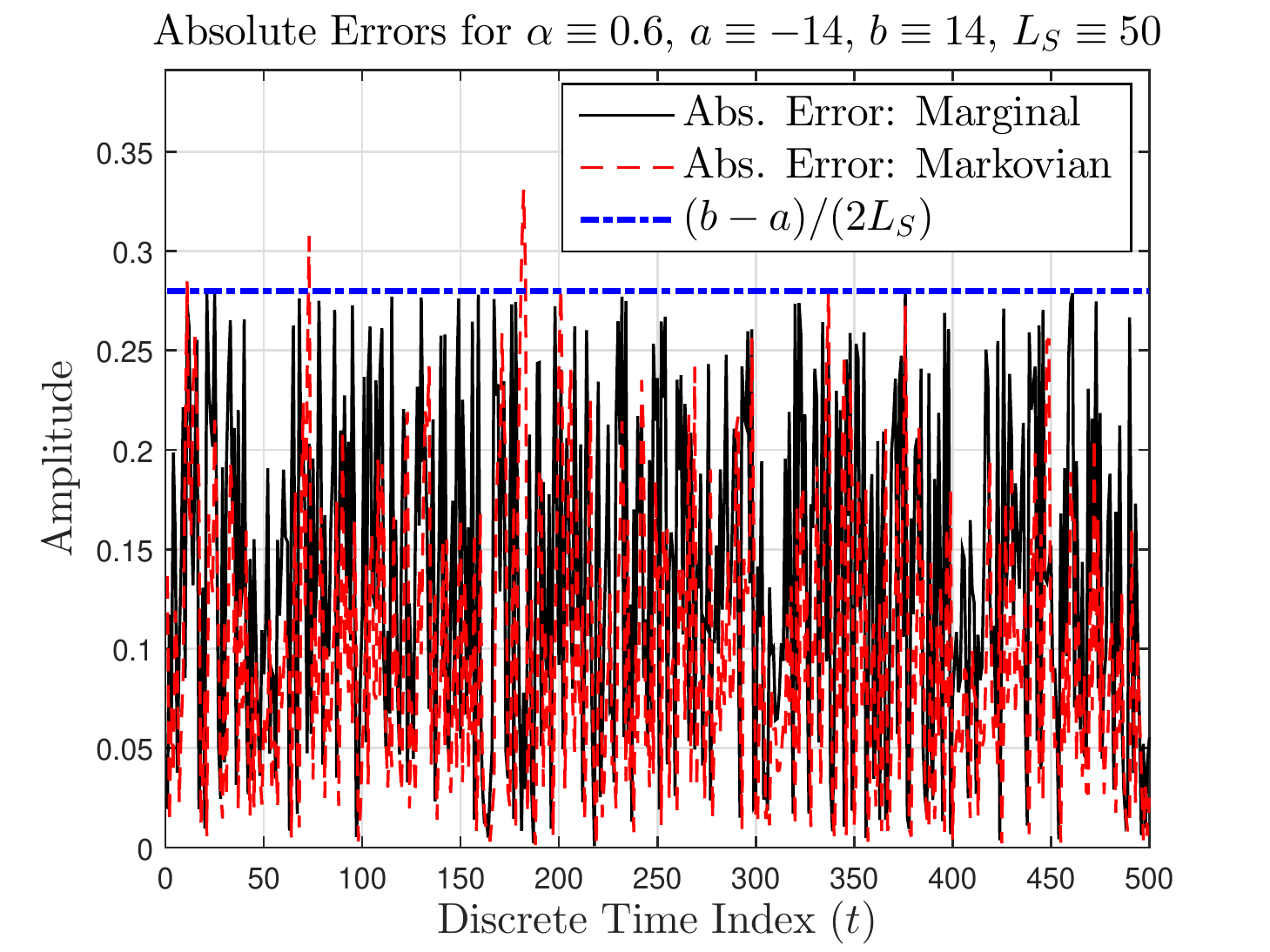}

}\negthickspace{}\subfloat[]{\includegraphics[scale=0.57]{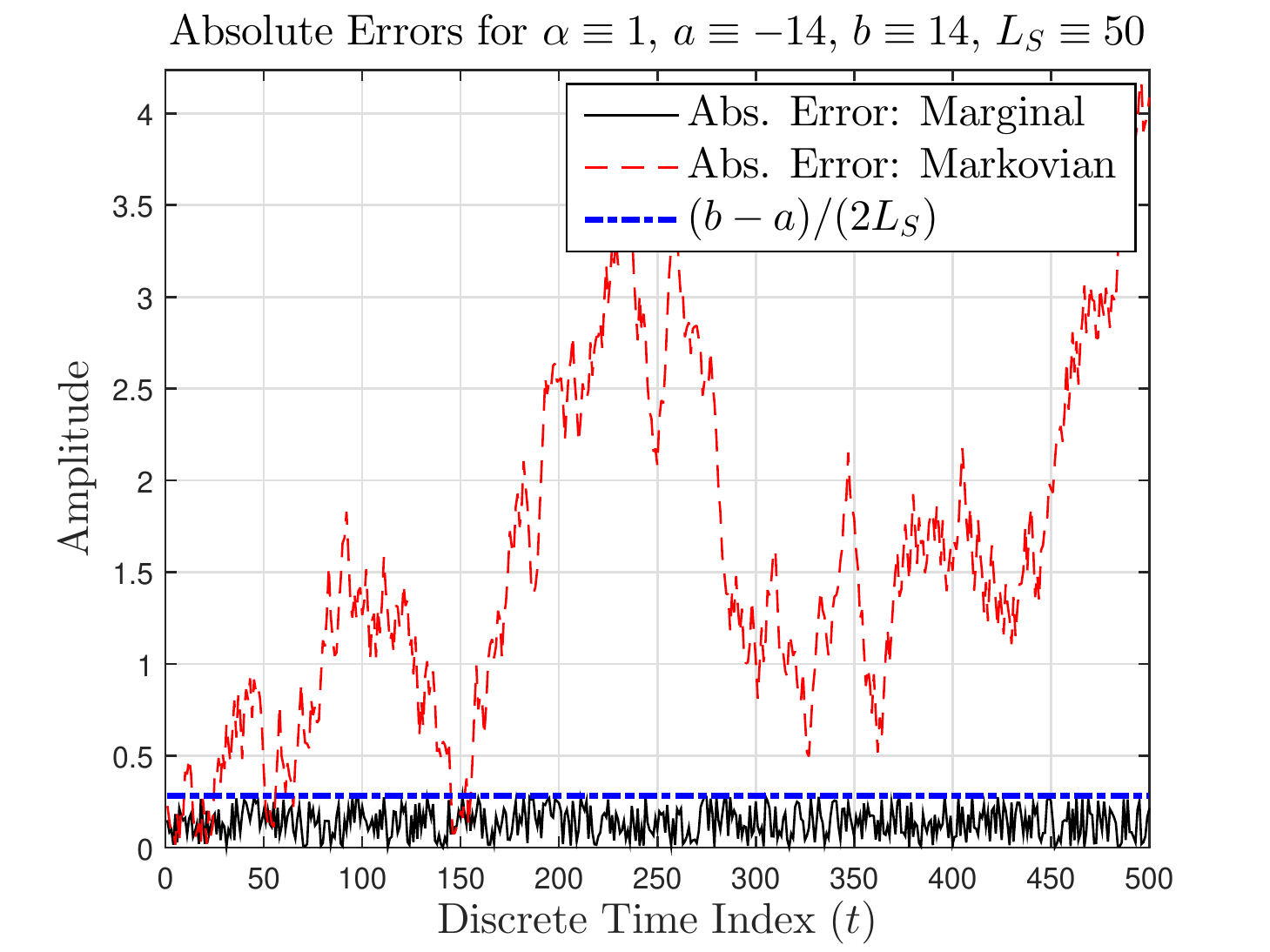}

}

\caption{\label{fig:Unstable_AR}Absolute errors between each of the quantized
versions of the $AR\left(1\right)$ process of our example, and the
true process itself, respectively, for (a) $\alpha\equiv0.6$ (stable
process) and (b) $\alpha\equiv1$ (a random walk).}
\vspace{-11pt}
\end{figure*}

\begin{example}
Let $X_{t}$ be a scalar, first order autoregressive process ($AR\left(1\right)$),
defined via the linear stochastic difference equation
\begin{equation}
X_{t}\triangleq\alpha X_{t-1}+W_{t},\quad\forall t\in\mathbb{N},\label{eq:AR_1}
\end{equation}
where $W_{t}\overset{i.i.d}{\sim}{\cal N}\left(0,1\right),\forall t\in\mathbb{N}$.
In our example, the parameter $\alpha\in\left[-1,1\right]$ is known
apriori and controls the stability of the process, with the case where
$\alpha\equiv1$ corresponding to a Gaussian random walk. Of course,
it is true that the state space of the process defined by \eqref{eq:AR_1}
is the whole $\mathbb{R}$, which means that, strictly speaking, there
are no finite $a$ and $b$ such that $X_{t}\in\left[a,b\right]\equiv{\cal Z},\forall t\in\mathbb{N}$,
with probability $1$. However, it is true that for sufficiently large
but finite $a$ and $b$, there exists a ``large'' measurable set
of possible outcomes for which $X_{t}$, being a Gaussian process,
indeed belongs to ${\cal Z}$ with very high probability. Whenever
this happens, we should be able to verify Lemmata \ref{Marginal_Convergence}
and \ref{Markovian_Convergence} directly.

Additionally, it is trivial to verify that the linear transition function
in \eqref{eq:AR_1} is always a contraction, with Lipschitz constant
$K\equiv\left|\alpha\right|$, whenever the $AR\left(1\right)$ process
of interest is stable, that is, whenever $\left|\alpha\right|<1$.

Fig. \ref{fig:Unstable_AR}(a) and \ref{fig:Unstable_AR}(b) show
the absolute errors between two $AR\left(1\right)$ processes and
their quantized versions according to Lemmata \ref{Marginal_Convergence}
and \ref{Markovian_Convergence}, for $\alpha\equiv0.6$ and $\alpha\equiv1$,
respectively. From the figure, one can readily observe that the marginal
quantization of $X_{t}$ always converges to $X_{t}$ uniformly in
time, regardless of the particular value of $\alpha$, experimentally
validating Lemma \ref{Marginal_Convergence}. On the other hand, it
is obvious that when the transition function of our system is not
a contraction (Lemma \ref{Markovian_Convergence}), uniform convergence
of the respective Markovian quantization to the true state $X_{t}$
cannot be guaranteed. Of course, we have not proved any additional
necessity regarding our sufficiency assumption related to the contractiveness
of the transition mapping of the process of interest, meaning that
there might exist processes which do not fulfill this requirement
and still converge uniformly. However, for uniform contractions, the
convergence will always be uniform \textit{whenever the process $X_{t}$
is bounded in ${\cal Z}$}.\hfill{}\ensuremath{\blacksquare}
\end{example}

\section{\label{sec:CORE_SECTION}Grid Based Approximate Filtering:\protect \\
Recursive Estimation \& Asymptotic Optimality}

It is indeed easy to show that when used as candidate state approximations
for defining approximate filtering operators in the fashion of Section
\ref{sec:Quant_Filtering}.B, both the marginal and Markovian quantization
schemes presented in Sections \ref{sec:Uniform-State-Quantizations}.B
and \ref{sec:Uniform-State-Quantizations}.C, respectively, converge
to the optimal nonlinear filter of the state $X_{t}$. Convergence
is in the sense of Theorem \ref{CONVERGENCE_THEOREM} presented in
Section \ref{sec:Quant_Filtering}.B, \textit{corroborating asymptotic
optimality under a unified convergence criterion}.

Specifically, under the respective (and usual) assumptions, Lemmata
\ref{Marginal_Convergence} and \ref{Markovian_Convergence} presented
above imply that both the marginal and Markovian approximations converge
to the true state $X_{t}$ at least in the almost sure sense, for
all $t\in\mathbb{N}$. Therefore, both will also converge to the true
state in probability, satisfying the second sufficient condition of
Theorem \ref{CONVERGENCE_THEOREM}. The following result is true.
Its proof, being apparent, is omitted.
\begin{thm}
\label{CONVERGENCE_THEOREM_2}\textbf{\textup{(Convergence of Approximate
Filters)}} Pick any natural $T<\infty$ and let the process $X_{t}^{L_{S}}$
represent either the marginal or the Markovian approximation of the
state $X_{t}$. Then, under the respective assumptions implied by
Lemmata \ref{Marginal_Convergence} and \ref{Markovian_Convergence},
the approximate filter ${\cal E}^{L_{S}}\left(\left.X_{t}\right|\mathscr{Y}_{t}\right)$
converges to the true nonlinear filter $\mathbb{E}_{{\cal P}}\left\{ \left.X_{t}\right|\mathscr{Y}_{t}\right\} $,
in the sense of Theorem \ref{CONVERGENCE_THEOREM}.
\end{thm}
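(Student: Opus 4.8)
The plan is to show that Theorem \ref{CONVERGENCE_THEOREM_2} is an immediate corollary of Theorem \ref{CONVERGENCE_THEOREM}, by verifying that in both the marginal and the Markovian case the approximating process $X_{t}^{L_{S}}$ satisfies the second bullet condition of that theorem, namely marginal convergence in probability to $X_{t}$ for every $t\in\mathbb{N}_{T}$. The only real content is to check that the hypotheses of Theorem \ref{CONVERGENCE_THEOREM} are legitimately in force, including the often-overlooked requirement that $X_{t}^{L_{S}}$ be independent of the observations $\mathbf{y}_{t}$ under the reference measure $\widetilde{{\cal P}}$; everything else is then quoted verbatim.

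First I would fix an arbitrary finite $T\in\mathbb{N}$. In the \emph{marginal} case, $X_{t}^{L_{S}}\equiv{\cal Q}_{L_{S}}(X_{t})$ is $\{\mathscr{X}_{t}\}$-adapted by construction, so by Remark \ref{rem:Property_EQUIV} its law (jointly with $X_{t}$, and hence with $\mathbf{y}_{t}$) is unchanged in passing from ${\cal P}$ to $\widetilde{{\cal P}}$, and under $\widetilde{{\cal P}}$ the state and its quantization are independent of $\mathbf{y}_{t}$; thus the admissibility requirements implicit in \eqref{eq:CoM_Approx} are met. Moreover Lemma \ref{Marginal_Convergence} gives, almost surely,
\begin{equation}
\sup_{t\in\mathbb{N}}\,\underset{\omega\in\Omega}{\mathrm{ess}\hspace{0.2em}\mathrm{sup}}\left\Vert X_{t}^{L_{S}}(\omega)-X_{t}(\omega)\right\Vert _{1}\underset{L_{S}\rightarrow\infty}{\longrightarrow}0,
\end{equation}
which in particular forces $X_{t}^{L_{S}}\to X_{t}$ ${\cal P}$-a.s., and therefore in ${\cal P}$-probability, for each fixed $t\in\mathbb{N}_{T}$. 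Hence the second hypothesis of Theorem \ref{CONVERGENCE_THEOREM} holds. In the \emph{Markovian} case the same argument runs, with Lemma \ref{Markovian_Convergence} supplying pointwise-in-$t$ (and, under uniform contractivity, uniform-in-$t$) almost-everywhere convergence, hence again convergence in probability for each $t\in\mathbb{N}_{T}$; the independence/law-preservation issue is handled here by Lemma \ref{P_Equivalence}, which lets us pick $\widetilde{{\cal P}}$ so that $X_{-1}$ and $W_{t}$ retain their distributions and are jointly independent of $\mathbf{y}_{t}$, so that $X_{t}^{L_{S}}$ defined through \eqref{eq:Approx_Process} has the required joint law with $\mathbf{y}_{t}$ under $\widetilde{{\cal P}}$.

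With the hypotheses verified, I would simply invoke Theorem \ref{CONVERGENCE_THEOREM}: there exists a measurable $\widehat{\Omega}_{T}\subseteq\Omega$ with ${\cal P}(\widehat{\Omega}_{T})\ge 1-(T+1)^{1-CN}\exp(-CN)$ such that the uniform-in-$(t,\omega)$ limit \eqref{eq:UAE} holds, which is precisely the assertion that ${\cal E}^{L_{S}}(X_{t}\,|\,\mathscr{Y}_{t})$ converges to $\mathbb{E}_{{\cal P}}\{X_{t}\,|\,\mathscr{Y}_{t}\}$ in the sense of Theorem \ref{CONVERGENCE_THEOREM}. Since $T$ was arbitrary, this completes the proof. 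I do not anticipate a genuine obstacle here — the statement is explicitly flagged as having an omitted, ``apparent'' proof — but the one point that deserves care, rather than being waved through, is the measure-change bookkeeping: one must be sure that the chosen $\widetilde{{\cal P}}$ simultaneously (i) leaves the law of $X_{t}$ (and thus of $X_{t}^{L_{S}}$) intact, and (ii) renders $X_{t}^{L_{S}}$ independent of $\mathbf{y}_{t}$, so that the approximate filter \eqref{eq:CoM_Approx} is well-posed and Theorem \ref{CONVERGENCE_THEOREM} applies; this is exactly what Remark \ref{rem:Property_EQUIV} and Lemma \ref{P_Equivalence} are there to guarantee for the marginal and Markovian cases, respectively.
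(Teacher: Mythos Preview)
Your proposal is correct and follows essentially the same approach as the paper: the paper's (omitted) proof amounts precisely to observing that Lemmata \ref{Marginal_Convergence} and \ref{Markovian_Convergence} yield almost sure, hence in-probability, convergence of $X_{t}^{L_{S}}$ to $X_{t}$ for each $t\in\mathbb{N}_{T}$, so the second bullet of Theorem \ref{CONVERGENCE_THEOREM} applies. Your added care regarding the measure-change bookkeeping via Remark \ref{rem:Property_EQUIV} and Lemma \ref{P_Equivalence} is exactly what the paper establishes in the text surrounding those results.
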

Although Theorem \ref{CONVERGENCE_THEOREM_2} shows asymptotic consistency
of the marginal and Markovian approximate filters in a strong sense,
it does not imply the existence of any finite dimensional scheme for
actually realizing these estimators. This is the purpose of the next
subsections. In particular, we develop recursive representations for
the asymptotically optimal (as $L_{S}\rightarrow\infty$) filter ${\cal E}^{L_{S}}\left(\left.X_{t}\right|\mathscr{Y}_{t}\right)$,
as defined previously in \eqref{eq:CoM_Approx}.

For later reference, let us define the \textit{bijective} mapping
(a trivial quantizer) ${\cal Q}_{L_{S}}^{e}:\left({\cal X}_{L_{S}},2^{{\cal X}_{L_{S}}}\right)\mapsto\left({\cal V}_{L_{S}},2^{{\cal V}_{L_{S}}}\right)$,
where the set ${\cal V}_{L_{S}}\triangleq\left\{ {\bf e}_{1}^{L_{S}},\ldots,{\bf e}_{L_{S}}^{L_{S}}\right\} $
contains the complete standard basis in $\mathbb{R}^{L_{S}\times1}$.
Since $\boldsymbol{x}_{L_{S}}^{l}$ is bijectively mapped to ${\bf e}_{l}^{L_{S}}$
for all $l\in\mathbb{N}_{L_{S}}^{+}$, we can write $\boldsymbol{x}_{L_{S}}^{l}\equiv{\bf X}{\bf e}_{l}^{L_{S}}$,
where ${\bf X}\triangleq\left[\boldsymbol{x}_{L_{S}}^{1}\,\boldsymbol{x}_{L_{S}}^{2}\,\ldots\,\boldsymbol{x}_{L_{S}}^{L_{S}}\right]\in\mathbb{R}^{M\times L_{S}}$
constitutes the respective \textit{reconstruction} matrix. From this
discussion, it is obvious that
\begin{equation}
\mathbb{E}_{\widetilde{{\cal P}}}\left\{ \left.X_{t}^{L_{S}}\Lambda_{t}^{L_{S}}\right|\mathscr{Y}_{t}\right\} \hspace{-2pt}\equiv\hspace{-2pt}{\bf X}\mathbb{E}_{\widetilde{{\cal P}}}\left\{ \left.{\cal Q}_{L_{S}}^{e}\left(X_{t}^{L_{S}}\right)\Lambda_{t}^{L_{S}}\right|\mathscr{Y}_{t}\right\} ,
\end{equation}
leading to the expression
\begin{equation}
{\cal E}^{L_{S}}\left(\left.X_{t}\right|\mathscr{Y}_{t}\right)\equiv\dfrac{{\bf X}\mathbb{E}_{\widetilde{{\cal P}}}\left\{ \left.{\cal Q}_{L_{S}}^{e}\left(X_{t}^{L_{S}}\right)\Lambda_{t}^{L_{S}}\right|\mathscr{Y}_{t}\right\} }{\mathbb{E}_{\widetilde{{\cal P}}}\left\{ \left.\Lambda_{t}^{L_{S}}\right|\mathscr{Y}_{t}\right\} },\label{eq:main_approx}
\end{equation}
for all $t\in\mathbb{N}$, regardless of the type of state quantization
employed. We additionally define the \textit{likelihood} matrix
\begin{equation}
\boldsymbol{\Lambda}_{t}\hspace{-2pt}\triangleq\hspace{-2pt}\mathrm{diag}\left(\mathsf{L}_{t}\left(\boldsymbol{x}_{L_{S}}^{1},{\bf y}_{t}\right)\,\ldots\,\mathsf{L}_{t}\left(\boldsymbol{x}_{L_{S}}^{L_{S}},{\bf y}_{t}\right)\hspace{-2pt}\right)\hspace{-2pt}\in\hspace{-2pt}\mathbb{R}^{L_{S}\times L_{S}}.
\end{equation}
Also to be subsequently used, given the quantization type, define
the column stochastic matrix $\boldsymbol{P}\in\left[0,1\right]^{L_{S}\times L_{S}}$
as 
\begin{equation}
\boldsymbol{P}\left(i,j\right)\triangleq{\cal P}\left(\left.X_{t}^{L_{S}}\equiv\boldsymbol{x}_{L_{S}}^{i}\right|X_{t-1}^{L_{S}}\equiv\boldsymbol{x}_{L_{S}}^{j}\right),\label{eq:Trans}
\end{equation}
for all $\left(i,j\right)\in\mathbb{N}_{L_{S}}^{+}\times\mathbb{N}_{L_{S}}^{+}$.

At this point, it will be important to note that the transition matrix
$\boldsymbol{P}$ defined in \eqref{eq:Trans} is implicitly assumed
to be time invariant, regardless of the state approximation employed.
Under the system model established in Section \ref{sec:Quant_Filtering}.A
(assuming temporal homogeneity for the original Markov process $X_{t}$),
this is unconditionally true when one considers Markovian state quantizations,
simply because the resulting approximating process $X_{t}^{L_{S}}$
constitutes a Markov chain with finite state space, as stated earlier
in Section \ref{sec:Uniform-State-Quantizations}.C. On the other
hand, the situation is quite different when one considers marginal
quantizations of the state. In that case, the conditional probabilities
\begin{align}
\hspace{-2pt}\hspace{-2pt}{\cal P}\hspace{-2pt}\left(\hspace{-2pt}\hspace{-1pt}\left.X_{t}^{L_{S}}\hspace{-2pt}\equiv\hspace{-2pt}\boldsymbol{x}_{L_{S}}^{i}\right|\hspace{-2pt}X_{t-1}^{L_{S}}\hspace{-2pt}\equiv\hspace{-2pt}\boldsymbol{x}_{L_{S}}^{j}\hspace{-1pt}\right)\hspace{-2pt}\hspace{-2pt} & \equiv\hspace{-2pt}{\cal P}\hspace{-2pt}\left(\hspace{-2pt}\hspace{-1pt}\left.X_{t}\hspace{-2pt}\in\hspace{-2pt}{\cal Z}_{L_{S}}^{i}\right|\hspace{-2pt}X_{t-1}\hspace{-2pt}\in\hspace{-2pt}{\cal Z}_{L_{S}}^{j}\hspace{-1pt}\right)\hspace{-2pt},\hspace{-2pt}\hspace{-2pt}\label{eq:marginal_MATRIX}
\end{align}
which would correspond to the $\left(i,j\right)$-th element of the
resulting transition matrix, are, in general, \textit{not time invariant
any more, even if the original Markov process is time homogeneous}.
Nevertheless, assuming the existence of at least one \textit{invariant
measure} (a stationary distribution) for the Markov process $X_{t}$,
\textit{also chosen as its initial distribution}, the aforementioned
probabilities are indeed time invariant. This is a very common and
reasonable assumption employed in practice, especially when tracking
stationary signals. For notational and intuitional simplicity, and
in order to present a unified treatment of all the approximate filters
considered in this paper, the aforementioned assumption will also
be adopted in the analysis that follows.

\subsection{Markovian Quantization}

We start with the case of Markovian quantizations, since it is easier
and more straightforward. Here, the development of the respective
approximate filter is based on the fact that $X_{t}^{L_{S}}$ constitutes
a Markov chain. Actually, this fact is the only requirement for the
existence of a recursive realization of the filter, with Lemma 3 providing
a sufficient condition, ensuring asymptotic optimality. The resulting
recursive scheme is summarized in the following result. The proof
is omitted, since it involves standard arguments in nonlinear filtering,
similar to the ones employed in the derivation of the filtering recursions
for a partially observed Markov chain with finite state space \cite{Elliott1994Exact,Elliott1994Hidden,Cappe_BOOK2005},
as previously mentioned.
\begin{thm}
\label{Markovian_Filter}\textbf{\textup{(The Markovian Filter)}}
Consider the Markovian state approximation $X_{t}^{L_{S}}$ and define
$\boldsymbol{E}_{t}\triangleq\mathbb{E}_{\widetilde{{\cal P}}}\left\{ \left.{\cal Q}_{L_{S}}^{e}\left(X_{t}^{L_{S}}\right)\Lambda_{t}^{L_{S}}\right|\mathscr{Y}_{t}\right\} \in\mathbb{R}^{L_{S}\times1}$,
for all $t\in\mathbb{N}$. Then, under the appropriate assumptions
(Lipschitz property of Lemma \ref{Markovian_Convergence}), the asymptotically
optimal in $L_{S}$ approximate grid based filter ${\cal E}^{L_{S}}\left(\left.X_{t}\right|\mathscr{Y}_{t}\right)$
can be expressed as
\begin{equation}
{\cal E}^{L_{S}}\left(\left.X_{t}\right|\mathscr{Y}_{t}\right)\equiv\dfrac{{\bf X}\boldsymbol{E}_{t}}{\left\Vert \boldsymbol{E}_{t}\right\Vert _{1}},\quad\forall t\in\mathbb{N},\label{eq:FILTER_1}
\end{equation}
where the process $\boldsymbol{E}_{t}$ satisfies the linear recursion
\begin{equation}
\boldsymbol{E}_{t}\equiv\boldsymbol{\Lambda}_{t}\boldsymbol{P}\boldsymbol{E}_{t-1},\quad\forall t\in\mathbb{N}.
\end{equation}
The filter is initialized setting $\boldsymbol{E}_{-1}\triangleq\mathbb{E}_{{\cal P}}\left\{ {\cal Q}_{L_{S}}^{e}\left(X_{-1}^{L_{S}}\right)\right\} $.\end{thm}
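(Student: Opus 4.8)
The plan is to exploit the Markov-chain structure of $X_{t}^{L_{S}}$ under $\widetilde{{\cal P}}$ — guaranteed by Lemma \ref{P_Equivalence}, which lets us work interchangeably under ${\cal P}$ or $\widetilde{{\cal P}}$ — to derive an \emph{unnormalized} filtering recursion for $\boldsymbol{E}_{t}$, and then normalize. First I would observe that by \eqref{eq:main_approx}, since $X_{t}^{L_{S}}\equiv{\bf X}{\cal Q}_{L_{S}}^{e}(X_{t}^{L_{S}})$, we have ${\cal E}^{L_{S}}(X_{t}|\mathscr{Y}_{t})\equiv{\bf X}\boldsymbol{E}_{t}/\mathbb{E}_{\widetilde{{\cal P}}}\{\Lambda_{t}^{L_{S}}|\mathscr{Y}_{t}\}$; but because ${\cal Q}_{L_{S}}^{e}(X_{t}^{L_{S}})$ takes values in the standard basis ${\cal V}_{L_{S}}$, summing its components gives $\mathbf{1}^{\boldsymbol{T}}{\cal Q}_{L_{S}}^{e}(X_{t}^{L_{S}})\equiv 1$, so $\mathbf{1}^{\boldsymbol{T}}\boldsymbol{E}_{t}\equiv\mathbb{E}_{\widetilde{{\cal P}}}\{\Lambda_{t}^{L_{S}}|\mathscr{Y}_{t}\}$, and since $\boldsymbol{E}_{t}$ has nonnegative entries this equals $\|\boldsymbol{E}_{t}\|_{1}$. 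That establishes \eqref{eq:FILTER_1} once the recursion for $\boldsymbol{E}_{t}$ is in hand.

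Next I would derive the recursion. Write $\Lambda_{t}^{L_{S}}\equiv\Lambda_{t-1}^{L_{S}}\mathsf{L}_{t}(X_{t}^{L_{S}},{\bf y}_{t})$ and condition on $\mathscr{Y}_{t}$. The key step is a tower-property/independence argument: under $\widetilde{{\cal P}}$ the chain $X_{t}^{L_{S}}$ is independent of the white-noise observation sequence ${\bf y}_{t}$, so conditioning on $\mathscr{Y}_{t}$ and then on $X_{t-1}^{L_{S}}$, the one-step transition probabilities of $X_{t}^{L_{S}}$ are exactly the entries $\boldsymbol{P}(i,j)$ of \eqref{eq:Trans}, time-invariant by the Markov-chain property. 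Componentwise, $\big(\mathbb{E}_{\widetilde{{\cal P}}}\{{\cal Q}_{L_{S}}^{e}(X_{t}^{L_{S}})\Lambda_{t}^{L_{S}}|\mathscr{Y}_{t}\}\big)_{i}$ splits as $\mathsf{L}_{t}(\boldsymbol{x}_{L_{S}}^{i},{\bf y}_{t})$ (pulled out because on the event $\{X_{t}^{L_{S}}\equiv\boldsymbol{x}_{L_{S}}^{i}\}$ the likelihood is the deterministic scalar $\mathsf{L}_{t}(\boldsymbol{x}_{L_{S}}^{i},{\bf y}_{t})$, and ${\bf y}_{t}$ is $\mathscr{Y}_{t}$-measurable) times $\sum_{j}\boldsymbol{P}(i,j)\big(\boldsymbol{E}_{t-1}\big)_{j}$; in matrix form this is precisely $\boldsymbol{E}_{t}\equiv\boldsymbol{\Lambda}_{t}\boldsymbol{P}\boldsymbol{E}_{t-1}$. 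The initialization follows from $\Lambda_{-1}\equiv 1$ together with Remark \ref{rem:Property_EQUIV} (the restrictions of ${\cal P}$ and $\widetilde{{\cal P}}$ to $\mathscr{X}_{\infty}$ coincide), giving $\boldsymbol{E}_{-1}\equiv\mathbb{E}_{\widetilde{{\cal P}}}\{{\cal Q}_{L_{S}}^{e}(X_{-1}^{L_{S}})\}\equiv\mathbb{E}_{{\cal P}}\{{\cal Q}_{L_{S}}^{e}(X_{-1}^{L_{S}})\}$.

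Finally, asymptotic optimality — that ${\cal E}^{L_{S}}(X_{t}|\mathscr{Y}_{t})$ converges to $\mathbb{E}_{{\cal P}}\{X_{t}|\mathscr{Y}_{t}\}$ in the sense of Theorem \ref{CONVERGENCE_THEOREM} — is inherited directly: under the Lipschitz hypothesis of Lemma \ref{Markovian_Convergence}, $X_{t}^{L_{S}}\to X_{t}$ almost everywhere (hence in probability) for each $t$, so the second bullet of Theorem \ref{CONVERGENCE_THEOREM} applies via Theorem \ref{CONVERGENCE_THEOREM_2}. The main obstacle, and the place deserving the most care, is the interchange-of-measure bookkeeping in the recursion step: one must be sure that the transition kernel appearing after conditioning on $\mathscr{Y}_{t}$ is genuinely the $\widetilde{{\cal P}}$-law of the chain (not the ${\cal P}$-law contaminated by correlation with ${\bf y}_{t}$), which is exactly what Lemma \ref{P_Equivalence} secures, and that time-invariance of $\boldsymbol{P}$ holds — automatic here since $X_{t}^{L_{S}}$ is a time-homogeneous finite-state Markov chain, unlike the marginal case discussed around \eqref{eq:marginal_MATRIX}. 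Everything else is linear algebra.
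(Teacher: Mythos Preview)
Your proposal is correct and matches the approach the paper itself points to: the paper omits the proof, stating only that it ``involves standard arguments in nonlinear filtering, similar to the ones employed in the derivation of the filtering recursions for a partially observed Markov chain with finite state space,'' and your derivation---factoring $\Lambda_{t}^{L_{S}}\equiv\Lambda_{t-1}^{L_{S}}\mathsf{L}_{t}(X_{t}^{L_{S}},{\bf y}_{t})$, exploiting independence of $X_{t}^{L_{S}}$ and ${\bf y}_{t}$ under $\widetilde{{\cal P}}$ (via Lemma \ref{P_Equivalence}), and invoking the Markov property to produce $\boldsymbol{E}_{t}\equiv\boldsymbol{\Lambda}_{t}\boldsymbol{P}\boldsymbol{E}_{t-1}$---is precisely that standard argument. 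The identification $\mathbb{E}_{\widetilde{{\cal P}}}\{\Lambda_{t}^{L_{S}}\,|\,\mathscr{Y}_{t}\}\equiv\|\boldsymbol{E}_{t}\|_{1}$ and the initialization via Remark \ref{rem:Property_EQUIV} are also handled correctly.
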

\begin{rem}
It is worth mentioning that, although \textit{formally similar} to,
the approximate filter introduced in Theorem \ref{Markovian_Filter}
\textit{does not} refer to a Markov chain with finite state space,
because the observations process utilized in the filtering iterations
corresponds to that of the \textit{real} partially observable system
under consideration. The quantity ${\cal E}^{L_{S}}\left(\left.X_{t}\right|\mathscr{Y}_{t}\right)$
\textit{does not constitute a conditional expectation} of the Markov
chain associated with $\boldsymbol{P}$, because the latter process
does not follow the probability law of the true state process $X_{t}$.\hfill{}\ensuremath{\blacksquare}
\end{rem}

\begin{rem}
In fact, $\boldsymbol{E}_{t}$ may be interpreted as a vector encoding
an \textit{unnormalized} point mass function, which, \textit{roughly
speaking}, expresses \textit{the belief of the quantized state}, given
the observations up to and including time $t$. Normalization by $\left\Vert \boldsymbol{E}_{t}\right\Vert _{1}$
corresponds precisely to a point mass function.\hfill{}\ensuremath{\blacksquare}
\end{rem}

\begin{rem}
For the benefit of the reader, we should mention that the Markovian
filter considered above essentially coincides with the approximate
grid based filter reported in (\cite{PARTICLE2002tutorial}, Section
IV.B), although the construction of the two filters is different:
the former is constructed via a Markovian quantization of the state,
whereas the latter \cite{PARTICLE2002tutorial} is based on a ``quasi-marginal''
approach (compare with \eqref{eq:marginal_MATRIX}). Nevertheless,
given our assumptions on the HMM under consideration, both formulations
result in exactly the same transition matrix. Therefore, the optimality
properties of the Markovian filter are indeed inherited by the grid
based filter described in \cite{PARTICLE2002tutorial}. \hfill{}\ensuremath{\blacksquare}
\end{rem}

\subsection{Marginal Quantization}

We now move on to the case of marginal quantizations. In order to
be able to come up with a simple, Markov chain based, recursive filtering
scheme, as in the case of Markovian quantizations previously treated,
it turns out that a further assumption is required, this time concerning
the stochastic kernel of the Markov process $X_{t}$. But before embarking
on the relevant analysis, let us present some essential definitions. 

First, for any process $X_{t}$, we will say that a sequence of functions
$\left\{ f_{n}\left(\cdot\right)\right\} _{n}$ is ${\cal P}_{X_{t}}\hspace{-2pt}\hspace{-2pt}-\hspace{-2pt}UI$,
if $\left\{ f_{n}\left(\cdot\right)\right\} _{n}$ is Uniformly Integrable
with respect to the pushforward measure induced by $X_{t}$, ${\cal P}_{X_{t}}$,
where $t\in\mathbb{N}\cup\left\{ -1\right\} $, i.e.,
\begin{equation}
\lim_{K\rightarrow\infty}\sup_{n}\int_{\left\{ \left|f_{n}\left(\boldsymbol{x}\right)\right|>K\right\} }\left|f_{n}\left(\boldsymbol{x}\right)\right|{\cal P}_{X_{t}}\left(\text{d}\boldsymbol{x}\right)\equiv0.
\end{equation}
Second, given $L_{S}$, recall from Section \ref{sec:Uniform-State-Quantizations}.A
that the set $\Pi_{L_{S}}$ contains as members all quantization regions
of ${\cal Z}$, ${\cal Z}_{L_{S}}^{j}$, $j\in\mathbb{N}_{L_{S}}^{+}$.
Then, given the stochastic kernel ${\cal K}\left(\left.\cdot\right|\cdot\right)$
associated with the time invariant transitions of $X_{t}$ and for
each $L_{S}\in\mathbb{N}^{+}$, we define the \textit{cumulative kernel}
\begin{flalign}
\hspace{-2pt}\hspace{-2pt}{\cal K}\hspace{-2pt}\left(\left.{\cal A}\right|\hspace{-2pt}\in\hspace{-2pt}{\cal Z}_{L_{S}}\left(\boldsymbol{x}\right)\right) & \hspace{-2pt}\triangleq\hspace{-2pt}\dfrac{{\displaystyle \int_{{\cal Z}_{L_{S}}\left(\boldsymbol{x}\right)}{\cal K}\left(\left.{\cal A}\right|\boldsymbol{\theta}\right){\cal P}_{X_{t-1}}\left(\text{d}\boldsymbol{\theta}\right)}}{{\cal P}\left(X_{t-1}\in{\cal Z}_{L_{S}}\left(\boldsymbol{x}\right)\right)}\nonumber \\
 & \hspace{-2pt}\equiv\hspace{-2pt}\dfrac{\mathbb{E}\left\{ {\cal K}\left(\left.{\cal A}\right|X_{t-1}\right)\mathds{1}_{\left\{ X_{t-1}\in{\cal Z}_{L_{S}}\left(\boldsymbol{x}\right)\right\} }\right\} }{\mathbb{E}\left\{ \mathds{1}_{\left\{ X_{t-1}\in{\cal Z}_{L_{S}}\left(\boldsymbol{x}\right)\right\} }\right\} }\nonumber \\
 & \hspace{-2pt}\equiv\mathbb{E}\left\{ \hspace{-2pt}\left.{\cal K}\left(\left.{\cal A}\right|X_{t-1}\right)\right|X_{t-1}\hspace{-2pt}\in\hspace{-2pt}{\cal Z}_{L_{S}}\left(\boldsymbol{x}\right)\right\} \hspace{-2pt},\label{eq:CUM_KERNEL}
\end{flalign}
for all Borel ${\cal A}\in\mathscr{B}\hspace{-2pt}\left(\mathbb{R}^{M\times1}\right)$
and all $\boldsymbol{x}\in{\cal Z}$, where ${\cal Z}_{L_{S}}\hspace{-2pt}\left(\boldsymbol{x}\right)\in\Pi_{L_{S}}$
denotes the unique quantization region, which includes $\boldsymbol{x}$.
Note that if $\boldsymbol{x}$ is substituted by $X_{t-1}\left(\omega\right)$,
the resulting quantity ${\cal Z}_{L_{S}}\left(X_{t-1}\left(\omega\right)\right)$
constitutes an $\mathscr{X}_{t}$-predictable \textit{set-valued random
element}. Now, if, for any $\boldsymbol{x}\in{\cal Z}$, ${\cal K}\left(\left.\cdot\right|\boldsymbol{x}\right)$
admits a stochastic kernel density $\kappa:\mathbb{R}^{M\times1}\times\mathbb{R}^{M\times1}\mapsto\mathbb{R}_{+},$
suggestively denoted as $\kappa\left(\left.\boldsymbol{y}\right|\boldsymbol{x}\right)$,
we define, in exactly the same fashion as above, the \textit{cumulative
kernel density}
\begin{equation}
\kappa\left(\left.\boldsymbol{y}\right|\hspace{-2pt}\in\hspace{-2pt}{\cal Z}_{L_{S}}\left(\boldsymbol{x}\right)\right)\triangleq\mathbb{E}\left\{ \hspace{-2pt}\left.\kappa\left(\left.\boldsymbol{y}\right|X_{t-1}\right)\right|X_{t-1}\hspace{-2pt}\in\hspace{-2pt}{\cal Z}_{L_{S}}\left(\boldsymbol{x}\right)\right\} ,\label{eq:CUM_DENSITY}
\end{equation}
for all $\boldsymbol{y}\in\mathbb{R}^{M\times1}$. The fact that $\kappa\left(\left.\cdot\right|\hspace{-2pt}\in\hspace{-2pt}{\cal Z}_{L_{S}}\left(\boldsymbol{x}\right)\right)$
is indeed a Radon-Nikodym derivative of ${\cal K}\hspace{-2pt}\left(\left.\cdot\right|\hspace{-2pt}\in\hspace{-2pt}{\cal Z}_{L_{S}}\left(\boldsymbol{x}\right)\right)$
readily follows by definition of the latter and Fubini's Theorem.
\begin{rem}
Observe that, although integration is with respect to ${\cal P}_{X_{t-1}}$
on the RHS of \eqref{eq:CUM_KERNEL}, ${\cal K}\hspace{-2pt}\left(\left.\cdot\right|\hspace{-2pt}\in\hspace{-2pt}{\cal Z}_{L_{S}}\left(\cdot\right)\right)$
is time invariant. This is due to stationarity of $X_{t}$, as assumed
in the beginning of Section \ref{sec:CORE_SECTION}, implying time
invariance of the marginal measure ${\cal P}_{X_{t}}$, for all $t\in\mathbb{N}\cup\left\{ -1\right\} $.
Additionally, for each $\boldsymbol{x}\in{\cal Z}$, when ${\cal A}$
is restricted to $\Pi_{L_{S}}$, ${\cal K}\hspace{-2pt}\left(\left.{\cal A}\right|\hspace{-2pt}\in\hspace{-2pt}{\cal Z}_{L_{S}}\left(\boldsymbol{x}\right)\right)$
corresponds to an entry of the (time invariant) matrix $\boldsymbol{P}$,
also defined earlier. In the general case, where the aforementioned
cumulative kernel is time varying, all subsequent analysis continues
to be valid, just by adding additional notational complexity.\hfill{}\ensuremath{\blacksquare}
\end{rem}
In respect to the relevant assumption required on ${\cal K}\left(\left.\cdot\right|\cdot\right)$,
as asserted above, let us now present the following definition.
\begin{defn}
\label{Cond_Reg}\textbf{(Cumulative Conditional Regularity of Markov
Kernels)} Consider the kernel ${\cal K}\left(\left.\cdot\right|\cdot\right)$,
associated with $X_{t}$, for all $t\in\mathbb{N}$. We say that ${\cal K}\left(\left.\cdot\right|\cdot\right)$
is \textit{Conditionally Regular of Type I (CRT I)}, if, for ${\cal P}_{X_{t}}\hspace{-2pt}\equiv\hspace{-2pt}{\cal P}_{X_{-1}}$-almost
all $\boldsymbol{x}$, there exists a ${\cal P}_{X_{-1}}\hspace{-2pt}\hspace{-2pt}-UI$
sequence $\left\{ \delta_{n}^{I}\left(\cdot\right)\ge0\right\} _{n\in\mathbb{N}^{+}}$
with $\delta_{n}^{I}\left(\cdot\right)\stackrel[n\rightarrow\infty]{a.e.}{\longrightarrow}0$,
such that
\begin{equation}
{\displaystyle \sup_{{\cal A}\in\Pi_{L_{S}}}}\left|{\cal K}\left(\left.{\cal A}\right|\boldsymbol{x}\right)-{\cal K}\left(\left.{\cal A}\right|\hspace{-2pt}\in\hspace{-2pt}{\cal Z}_{L_{S}}\left(\boldsymbol{x}\right)\right)\right|\le\dfrac{\delta_{L_{S}}^{I}\left(\boldsymbol{x}\right)}{L_{S}}.
\end{equation}
\renewcommand{\arraystretch}{1}If, further, for ${\cal P}_{X_{-1}}$-almost
all $\boldsymbol{x}$, the measure ${\cal K}\left(\left.\cdot\right|\boldsymbol{x}\right)$
admits a density $\kappa\left(\left.\cdot\right|\boldsymbol{x}\right),$
and if there exists another ${\cal P}_{X_{-1}}\hspace{-2pt}\hspace{-2pt}-UI$
sequence $\left\{ \delta_{n}^{II}\left(\cdot\right)\ge0\right\} _{n\in\mathbb{N}^{+}}$
with $\delta_{n}^{II}\left(\cdot\right)\stackrel[n\rightarrow\infty]{a.e.}{\longrightarrow}0$,
such that
\begin{equation}
\underset{\boldsymbol{y}\in\mathbb{R}^{M\times1}}{\mathrm{ess}\hspace{0.2em}\mathrm{sup}}\left|\kappa\left(\left.\boldsymbol{y}\right|\boldsymbol{x}\right)-\kappa\left(\left.\boldsymbol{y}\right|\hspace{-2pt}\in\hspace{-2pt}{\cal Z}_{L_{S}}\left(\boldsymbol{x}\right)\right)\right|\le\delta_{L_{S}}^{II}\left(\boldsymbol{x}\right),
\end{equation}
${\cal K}\left(\left.\cdot\right|\cdot\right)$ is called \textit{Conditionally
Regular of Type II (CRT II)}. In any case, $X_{t}$ will also be called
conditionally regular.
\end{defn}
\vspace{-0.6cm}

\noindent \begin{center}
\rule[0.5ex]{0.5\columnwidth}{0.5pt}
\par\end{center}

\vspace{-0.2cm}

A consequence of conditional regularity is the following Martingale
Difference (MD) \cite{Segall1976,Elliott1994Hidden} type representation
of the marginally quantized process ${\cal Q}_{L_{S}}^{e}\left(X_{t}^{L_{S}}\right)$.
\begin{lem}
\label{Doob_Lemma-1}\textbf{\textup{(Semirecursive MD-type Representation
of Marginal Quantizations)}} Assume that the state process $X_{t}$
is conditionally regular. Then, the quantized process ${\cal Q}_{L_{S}}^{e}\left(X_{t}^{L_{S}}\right)$
admits the representation
\begin{flalign}
{\cal Q}_{L_{S}}^{e}\left(X_{t}^{L_{S}}\right) & \equiv\boldsymbol{P}{\cal Q}_{L_{S}}^{e}\left(X_{t-1}^{L_{S}}\right)+\boldsymbol{{\cal M}}_{t}^{e}+\boldsymbol{\varepsilon}_{t}^{L_{S}},
\end{flalign}
where, under the base measure $\widetilde{{\cal P}}$,$\boldsymbol{{\cal M}}_{t}^{e}\in\mathbb{R}^{L_{S}\times1}$
constitutes an $\mathscr{X}_{t}$-MD process and $\boldsymbol{\varepsilon}_{t}^{L_{S}}\in\mathbb{R}^{L_{S}\times1}$
constitutes a $\left\{ \mathscr{X}_{t}\right\} $-predictable process,
such that 
\begin{itemize}
\item if $X_{t}$ is CRT I, then
\begin{equation}
\left\Vert \boldsymbol{\varepsilon}_{t}^{L_{S}}\right\Vert _{1}\le\delta_{L_{S}}^{I}\left(X_{t-1}\right)\underset{L_{S}\rightarrow\infty}{\longrightarrow}0,\;\widetilde{{\cal P}}-a.s.,
\end{equation}

\item whereas, if $X_{t}$ is CRT II, then
\begin{equation}
\hspace{-2pt}\hspace{-2pt}\hspace{-2pt}\left\Vert \boldsymbol{\varepsilon}_{t}^{L_{S}}\right\Vert _{1}\le\left|b-a\right|^{M}\delta_{L_{S}}^{II}\left(X_{t-1}\right)\underset{L_{S}\rightarrow\infty}{\longrightarrow}0,\;\widetilde{{\cal P}}-a.s.,
\end{equation}

\end{itemize}
everywhere in time.\end{lem}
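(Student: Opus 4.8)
The plan is to obtain the stated decomposition as a Doob-type (predictable) splitting of the one-hot encoded process ${\cal Q}_{L_{S}}^{e}\left(X_{t}^{L_{S}}\right)$ relative to the filtration $\left\{ \mathscr{X}_{t}\right\} $ under the reference measure $\widetilde{{\cal P}}$, and then to identify the resulting compensator with $\boldsymbol{P}{\cal Q}_{L_{S}}^{e}\left(X_{t-1}^{L_{S}}\right)$ up to a predictable remainder that will play the role of $\boldsymbol{\varepsilon}_{t}^{L_{S}}$. Concretely, since ${\cal Q}_{L_{S}}^{e}\left(\cdot\right)$ takes values in the standard basis of $\mathbb{R}^{L_{S}\times1}$ it is bounded, hence integrable under any measure on the base space, and I will set
\[
\boldsymbol{{\cal M}}_{t}^{e}\triangleq{\cal Q}_{L_{S}}^{e}\left(X_{t}^{L_{S}}\right)-\mathbb{E}_{\widetilde{{\cal P}}}\left\{ \left.{\cal Q}_{L_{S}}^{e}\left(X_{t}^{L_{S}}\right)\right|\mathscr{X}_{t-1}\right\} ,
\]
which is $\mathscr{X}_{t}$-measurable (as $X_{t}^{L_{S}}$ is a deterministic function of $X_{t}$) and satisfies $\mathbb{E}_{\widetilde{{\cal P}}}\{\boldsymbol{{\cal M}}_{t}^{e}|\mathscr{X}_{t-1}\}\equiv\boldsymbol{0}$ by construction; hence it is an $\mathscr{X}_{t}$-MD process under $\widetilde{{\cal P}}$.

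Next, I will use Lemma \ref{P_Equivalence} and the adaptation of Remark \ref{rem:Property_EQUIV} to the present (marginal) case: the restrictions of ${\cal P}$ and $\widetilde{{\cal P}}$ to $\mathscr{X}_{\infty}$ coincide, so $X_{t}$ remains Markov under $\widetilde{{\cal P}}$ with the original kernel ${\cal K}\left(\left.\cdot\right|\cdot\right)$. Therefore the compensator above is $\sigma\{X_{t-1}\}$-measurable, and its $i$-th entry equals $\widetilde{{\cal P}}(X_{t}\in{\cal Z}_{L_{S}}^{i}|X_{t-1})\equiv{\cal K}({\cal Z}_{L_{S}}^{i}|X_{t-1})$. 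On the other hand, letting $j\equiv j(\omega)$ be the random index with $X_{t-1}\in{\cal Z}_{L_{S}}^{j}$, we have ${\cal Q}_{L_{S}}^{e}(X_{t-1}^{L_{S}})\equiv{\bf e}_{j}^{L_{S}}$, so $\boldsymbol{P}{\cal Q}_{L_{S}}^{e}(X_{t-1}^{L_{S}})$ is the $j$-th column of $\boldsymbol{P}$, whose $i$-th entry is, by \eqref{eq:marginal_MATRIX} together with the definition \eqref{eq:CUM_KERNEL} of the cumulative kernel, exactly ${\cal K}({\cal Z}_{L_{S}}^{i}|\in{\cal Z}_{L_{S}}(X_{t-1}))$. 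Adding and subtracting this vector then produces the claimed identity with
\[
\boldsymbol{\varepsilon}_{t}^{L_{S}}\triangleq\mathbb{E}_{\widetilde{{\cal P}}}\left\{ \left.{\cal Q}_{L_{S}}^{e}\left(X_{t}^{L_{S}}\right)\right|\mathscr{X}_{t-1}\right\} -\boldsymbol{P}{\cal Q}_{L_{S}}^{e}\left(X_{t-1}^{L_{S}}\right),
\]
the $i$-th entry of which is ${\cal K}({\cal Z}_{L_{S}}^{i}|X_{t-1})-{\cal K}({\cal Z}_{L_{S}}^{i}|\in{\cal Z}_{L_{S}}(X_{t-1}))$; since this is $\sigma\{X_{t-1}\}\subseteq\mathscr{X}_{t-1}$-measurable, $\boldsymbol{\varepsilon}_{t}^{L_{S}}$ is $\left\{ \mathscr{X}_{t}\right\} $-predictable.

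To obtain the norm bounds, in the CRT I case I will apply Definition \ref{Cond_Reg} with $\boldsymbol{x}\equiv X_{t-1}$, taking the supremum over the $L_{S}$ cells ${\cal A}\in\Pi_{L_{S}}$, which bounds each coordinate of $\boldsymbol{\varepsilon}_{t}^{L_{S}}$ in absolute value by $\delta_{L_{S}}^{I}(X_{t-1})/L_{S}$; summing over the $L_{S}$ coordinates gives $\|\boldsymbol{\varepsilon}_{t}^{L_{S}}\|_{1}\le\delta_{L_{S}}^{I}(X_{t-1})$. In the CRT II case, I will write each coordinate as $\int_{{\cal Z}_{L_{S}}^{i}}(\kappa(\boldsymbol{y}|X_{t-1})-\kappa(\boldsymbol{y}|\in{\cal Z}_{L_{S}}(X_{t-1})))\mathrm{d}\boldsymbol{y}$ and bound it by $(\int_{{\cal Z}_{L_{S}}^{i}}\mathrm{d}\boldsymbol{y})\,\delta_{L_{S}}^{II}(X_{t-1})\equiv(|b-a|^{M}/L_{S})\,\delta_{L_{S}}^{II}(X_{t-1})$, so that summation over $i$ yields $\|\boldsymbol{\varepsilon}_{t}^{L_{S}}\|_{1}\le|b-a|^{M}\delta_{L_{S}}^{II}(X_{t-1})$. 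Finally, since $X_{t-1}\sim{\cal P}_{X_{-1}}$ by the stationarity assumption (and ${\cal P}$, $\widetilde{{\cal P}}$ agree on $\mathscr{X}_{\infty}$) while $\delta_{n}^{I}\rightarrow0$ and $\delta_{n}^{II}\rightarrow0$ hold ${\cal P}_{X_{-1}}$-a.e., both upper bounds tend to $0$ $\widetilde{{\cal P}}$-a.s. as $L_{S}\rightarrow\infty$, for every $t\in\mathbb{N}$.

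I do not expect a substantial obstacle here: integrability is automatic because ${\cal Q}_{L_{S}}^{e}$ is bounded, and the uniform integrability of the $\delta$-sequences built into Definition \ref{Cond_Reg} is not needed for this lemma (it is used later, when limits are passed through expectations). The only place requiring genuine care is the identification step --- justifying, via the measure-equivalence remark, that the $\mathscr{X}_{t-1}$-conditional expectation reduces under $\widetilde{{\cal P}}$ to the function $\boldsymbol{x}\mapsto({\cal K}({\cal Z}_{L_{S}}^{i}|\boldsymbol{x}))_{i}$ evaluated at $X_{t-1}$, and that $\boldsymbol{P}{\cal Q}_{L_{S}}^{e}(X_{t-1}^{L_{S}})$ is exactly the cumulative-kernel vector $({\cal K}({\cal Z}_{L_{S}}^{i}|\in{\cal Z}_{L_{S}}(X_{t-1})))_{i}$ --- together with the elementary $\ell_{1}$ bookkeeping that produces precisely the constants $1$ and $|b-a|^{M}$.
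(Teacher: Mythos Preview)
Your proposal is correct and follows essentially the same route as the paper: both compute the $\mathscr{X}_{t-1}$-compensator of ${\cal Q}_{L_{S}}^{e}(X_{t}^{L_{S}})$ via the Markov property of $X_{t}$ under $\widetilde{{\cal P}}$, identify its entries with ${\cal K}({\cal Z}_{L_{S}}^{i}\mid X_{t-1})$, compare with the cumulative-kernel vector $\boldsymbol{P}{\cal Q}_{L_{S}}^{e}(X_{t-1}^{L_{S}})$, and bound the resulting predictable discrepancy entrywise using Definition~\ref{Cond_Reg}. Your presentation is slightly more streamlined (you define $\boldsymbol{\varepsilon}_{t}^{L_{S}}$ directly as the difference of the two vectors, whereas the paper builds it from a pointwise residual $\varepsilon_{L_{S}}(\cdot,\cdot)$ and then integrates), but the resulting objects and bounds coincide; your remark that the UI part of the definition is not used here is also correct.
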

\begin{proof}[Proof of Lemma \ref{Doob_Lemma-1}]
See Appendix B.
\end{proof}
Now, consider an auxiliary Markov chain $Z_{t}^{L_{S}}\in{\cal V}_{L_{S}}$,
with $\boldsymbol{P}$ (defined as in \eqref{eq:Trans}) as its transition
matrix and with initial distribution to be specified. Of course, $Z_{t}^{L_{S}}$
can be represented as $Z_{t}^{L_{S}}\equiv\boldsymbol{P}Z_{t-1}^{L_{S}}+\widetilde{\boldsymbol{{\cal M}}}_{t}^{e}$,
where $\widetilde{\boldsymbol{{\cal M}}}_{t}^{e}\in\mathbb{R}^{L_{S}\times1}$
constitutes a $\mathscr{Z}_{t}$-MD process, with $\left\{ \mathscr{Z}_{t}\right\} _{t\in\mathbb{N}}$
being the complete natural filtration generated by $Z_{t}^{L_{S}}$. 

Due to the existence of the ``bias'' process $\boldsymbol{\varepsilon}_{t}^{L_{S}}$
in the martingale difference representation of ${\cal Q}_{L_{S}}^{e}\left(X_{t}^{L_{S}}\right)$
(see Lemma \ref{Doob_Lemma-1}), the direct derivation of a filtering
recursion for this process is difficult. However, it turns out that
the approximate filter involving the marginal state quantization $X_{t}^{L_{S}}$,
${\cal E}^{L_{S}}\left(\left.X_{t}\right|\mathscr{Y}_{t}\right)$,
can be further approximated by the also approximate filter
\begin{equation}
\widetilde{{\cal E}}^{L_{S}}\left(\left.X_{t}\right|\mathscr{Y}_{t}\right)\triangleq\dfrac{{\bf X}\mathbb{E}_{\widetilde{{\cal P}}}\left\{ \left.Z_{t}^{L_{S}}\Lambda_{t}^{Z,L_{S}}\right|\mathscr{Y}_{t}\right\} }{\mathbb{E}_{\widetilde{{\cal P}}}\left\{ \left.\Lambda_{t}^{Z,L_{S}}\right|\mathscr{Y}_{t}\right\} },
\end{equation}
for all $t\in\mathbb{N}$, where the functional $\Lambda_{t}^{Z,L_{S}}$
is defined exactly like $\Lambda_{t}^{L_{S}},$ but replacing $X_{t}^{L_{S}}$
with $Z_{t}^{L_{S}}$. This latter filter indeed admits the recursive
representation proposed in Theorem \ref{Markovian_Filter} (with $\boldsymbol{P}$
defined as in \eqref{eq:Trans}, reflecting the choice of a marginal
state approximation).

Consequently, if we are interested in the asymptotic behavior of the
approximation error between $\widetilde{{\cal E}}^{L_{S}}\left(\left.X_{t}\right|\mathscr{Y}_{t}\right)$
and the original nonlinear filter $\mathbb{E}_{{\cal P}}\left\{ \left.X_{t}\right|\mathscr{Y}_{t}\right\} $,
we can write
\begin{multline}
\left\Vert \mathbb{E}_{{\cal P}}\left\{ \left.X_{t}\right|\mathscr{Y}_{t}\right\} -\widetilde{{\cal E}}^{L_{S}}\left(\left.X_{t}\right|\mathscr{Y}_{t}\right)\right\Vert _{1}\\
\le\left\Vert \mathbb{E}_{{\cal P}}\left\{ \left.X_{t}\right|\mathscr{Y}_{t}\right\} -{\cal E}^{L_{S}}\left(\left.X_{t}\right|\mathscr{Y}_{t}\right)\right\Vert _{1}+\left\Vert {\cal E}^{L_{S}}\left(\left.X_{t}\right|\mathscr{Y}_{t}\right)-\widetilde{{\cal E}}^{L_{S}}\left(\left.X_{t}\right|\mathscr{Y}_{t}\right)\right\Vert _{1}.\label{eq:filters_proof_7}
\end{multline}
However, from Theorem \ref{CONVERGENCE_THEOREM_2}, we know that,
under the respective conditions,
\begin{equation}
\sup_{t\in\mathbb{N}_{T}}\sup_{\omega\in\widehat{\Omega}_{T}}\left\Vert {\cal E}^{L_{S}}\left(\left.X_{t}\right|\mathscr{Y}_{t}\right)-\mathbb{E}_{{\cal P}}\left\{ \left.X_{t}\right|\mathscr{Y}_{t}\right\} \right\Vert _{1}\underset{L_{S}\rightarrow\infty}{\longrightarrow}0.
\end{equation}
Therefore, if we show that error between ${\cal E}^{L_{S}}\left(\left.X_{t}\right|\mathscr{Y}_{t}\right)$
and $\widetilde{{\cal E}}^{L_{S}}\left(\left.X_{t}\right|\mathscr{Y}_{t}\right)$
vanishes in the above sense, then, $\widetilde{{\cal E}}^{L_{S}}\left(\left.X_{t}\right|\mathscr{Y}_{t}\right)$
will converge to $\mathbb{E}_{{\cal P}}\left\{ \left.X_{t}\right|\mathscr{Y}_{t}\right\} $,
also in the same sense. It turns out that if $X_{t}$ is conditionally
regular, the aforementioned desired statement always holds, as follows.
\begin{lem}
\label{Convergence_FAKE}\textbf{\textup{(Convergence of Approximate
Filters)}} For any natural $T<\infty$, suppose that the state process
$X_{t}$ is conditionally regular and that the initial measure of
the chain $Z_{t}^{L_{S}}$ is chosen such that 
\begin{equation}
\mathbb{E}_{\widetilde{{\cal P}}}\left\{ Z_{-1}^{L_{S}}\right\} \equiv\mathbb{E}_{{\cal P}}\left\{ {\cal Q}_{L_{S}}^{e}\left(X_{-1}^{L_{S}}\right)\right\} .
\end{equation}
Then, for the same measurable subset $\widehat{\Omega}_{T}\subseteq\Omega$
of Theorem \ref{CONVERGENCE_THEOREM}, it is true that
\begin{equation}
\sup_{t\in\mathbb{N}_{T}}\sup_{\omega\in\widehat{\Omega}_{T}}\left\Vert {\cal E}^{L_{S}}\left(\left.X_{t}\right|\mathscr{Y}_{t}\right)-\widetilde{{\cal E}}^{L_{S}}\left(\left.X_{t}\right|\mathscr{Y}_{t}\right)\right\Vert _{1}\underset{L_{S}\rightarrow\infty}{\longrightarrow}0.
\end{equation}
Additionally, under the same setting, it follows that
\begin{equation}
\sup_{t\in\mathbb{N}_{T}}\sup_{\omega\in\widehat{\Omega}_{T}}\left\Vert \widetilde{{\cal E}}^{L_{S}}\left(\left.X_{t}\right|\mathscr{Y}_{t}\right)-\mathbb{E}_{{\cal P}}\left\{ \left.X_{t}\right|\mathscr{Y}_{t}\right\} \right\Vert _{1}\underset{L_{S}\rightarrow\infty}{\longrightarrow}0.
\end{equation}
\end{lem}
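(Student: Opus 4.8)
The plan is to reduce the whole statement to controlling one deterministic quantity --- the $\ell_1$-distance between the two \emph{unnormalized} filter vectors --- and then transfer this to the normalized filters via an elementary ratio bound that is uniform on $\widehat{\Omega}_T$. The second displayed conclusion then follows immediately from the first, the triangle inequality \eqref{eq:filters_proof_7}, and Theorem \ref{CONVERGENCE_THEOREM_2} (which applies to the marginal quantization by Lemma \ref{Marginal_Convergence}, with the very same good event $\widehat{\Omega}_T$). So it suffices to prove the first displayed limit. Accordingly, I would set $\boldsymbol{F}_t\triangleq\mathbb{E}_{\widetilde{{\cal P}}}\{{\cal Q}_{L_S}^e(X_t^{L_S})\Lambda_t^{L_S}|\mathscr{Y}_t\}$ (with $X_t^{L_S}$ the marginal quantization) and $\widetilde{\boldsymbol{F}}_t\triangleq\mathbb{E}_{\widetilde{{\cal P}}}\{Z_t^{L_S}\Lambda_t^{Z,L_S}|\mathscr{Y}_t\}$, so that ${\cal E}^{L_S}(X_t|\mathscr{Y}_t)={\bf X}\boldsymbol{F}_t/\|\boldsymbol{F}_t\|_1$ and $\widetilde{{\cal E}}^{L_S}(X_t|\mathscr{Y}_t)={\bf X}\widetilde{\boldsymbol{F}}_t/\|\widetilde{\boldsymbol{F}}_t\|_1$ (both numerators have nonnegative entries, so their $\ell_1$-norms equal ${\bf 1}^{\boldsymbol{T}}(\cdot)$, i.e.\ the respective denominators).

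The core step is to derive a \emph{perturbed} recursion for $\boldsymbol{F}_t$. Using ${\cal Q}_{L_S}^e(X_t^{L_S})\mathsf{L}_t(X_t^{L_S},{\bf y}_t)=\boldsymbol{\Lambda}_t{\cal Q}_{L_S}^e(X_t^{L_S})$ (both sides select the $l$-th coordinate when $X_t^{L_S}=\boldsymbol{x}_{L_S}^l$) together with $\Lambda_t^{L_S}=\Lambda_{t-1}^{L_S}\mathsf{L}_t(X_t^{L_S},{\bf y}_t)$, substituting the representation ${\cal Q}_{L_S}^e(X_t^{L_S})=\boldsymbol{P}{\cal Q}_{L_S}^e(X_{t-1}^{L_S})+\boldsymbol{{\cal M}}_t^e+\boldsymbol{\varepsilon}_t^{L_S}$ of Lemma \ref{Doob_Lemma-1}, pulling out the $\mathscr{Y}_t$-measurable $\boldsymbol{\Lambda}_t$, and invoking that under $\widetilde{{\cal P}}$ the state process is independent of the observations (so conditioning on the extra coordinate ${\bf y}_t$ is inert) and that $\boldsymbol{{\cal M}}_t^e$ is an $\mathscr{X}_t$-martingale-difference under $\widetilde{{\cal P}}$ (so $\mathbb{E}_{\widetilde{{\cal P}}}\{\boldsymbol{{\cal M}}_t^e\Lambda_{t-1}^{L_S}|\mathscr{Y}_t\}={\bf 0}$ by the tower property over $\sigma(\mathscr{X}_{t-1}\cup\mathscr{Y}_{t-1})$), I would obtain $\boldsymbol{F}_t=\boldsymbol{\Lambda}_t\boldsymbol{P}\boldsymbol{F}_{t-1}+\boldsymbol{\Lambda}_t\boldsymbol{b}_t$ with $\boldsymbol{b}_t\triangleq\mathbb{E}_{\widetilde{{\cal P}}}\{\boldsymbol{\varepsilon}_t^{L_S}\Lambda_{t-1}^{L_S}|\mathscr{Y}_t\}$. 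By Theorem \ref{Markovian_Filter} the auxiliary chain satisfies the \emph{exact} recursion $\widetilde{\boldsymbol{F}}_t=\boldsymbol{\Lambda}_t\boldsymbol{P}\widetilde{\boldsymbol{F}}_{t-1}$, and $\boldsymbol{F}_{-1}=\mathbb{E}_{\widetilde{{\cal P}}}\{{\cal Q}_{L_S}^e(X_{-1}^{L_S})\}=\mathbb{E}_{{\cal P}}\{{\cal Q}_{L_S}^e(X_{-1}^{L_S})\}=\mathbb{E}_{\widetilde{{\cal P}}}\{Z_{-1}^{L_S}\}=\widetilde{\boldsymbol{F}}_{-1}$ by Remark \ref{rem:Property_EQUIV} and the hypothesis on the initial law of $Z_t^{L_S}$. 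Hence $\boldsymbol{D}_t\triangleq\boldsymbol{F}_t-\widetilde{\boldsymbol{F}}_t$ obeys $\boldsymbol{D}_t=\boldsymbol{\Lambda}_t\boldsymbol{P}\boldsymbol{D}_{t-1}+\boldsymbol{\Lambda}_t\boldsymbol{b}_t$ with $\boldsymbol{D}_{-1}={\bf 0}$.

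Next I would propagate the bias. Unrolling gives $\boldsymbol{D}_t=\sum_{s=0}^{t}(\boldsymbol{\Lambda}_t\boldsymbol{P})\cdots(\boldsymbol{\Lambda}_{s+1}\boldsymbol{P})\boldsymbol{\Lambda}_s\boldsymbol{b}_s$, and each $\boldsymbol{\Lambda}_r\boldsymbol{P}$ is $\ell_1$-nonexpansive: $\boldsymbol{P}$ is column-stochastic with nonnegative entries, while by Assumption 1 each diagonal entry of $\boldsymbol{\Lambda}_r$ is a Gaussian density (normalized by $(\sqrt{2\pi})^N$) and hence at most $\lambda_{inf}^{-N/2}\le1$ since $\lambda_{min}(\mathbf{C}_r)\ge\lambda_{inf}>1$; in particular $0\le\Lambda_{t-1}^{L_S}\le1$ everywhere. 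Thus $\|\boldsymbol{D}_t\|_1\le\sum_{s=0}^{t}\|\boldsymbol{b}_s\|_1$, and by Jensen together with Lemma \ref{Doob_Lemma-1} (CRT I), $\|\boldsymbol{b}_s\|_1\le\mathbb{E}_{\widetilde{{\cal P}}}\{\|\boldsymbol{\varepsilon}_s^{L_S}\|_1\Lambda_{s-1}^{L_S}|\mathscr{Y}_s\}\le\mathbb{E}_{\widetilde{{\cal P}}}\{\delta_{L_S}^{I}(X_{s-1})|\mathscr{Y}_s\}$. Since $\delta_{L_S}^{I}(X_{s-1})$ is $\mathscr{X}_{s-1}$-measurable it is independent of $\mathscr{Y}_s$ under $\widetilde{{\cal P}}$ and has the same law under $\widetilde{{\cal P}}$ and ${\cal P}$, so by stationarity of $X_t$ this bound is the \emph{deterministic} number $\int\delta_{L_S}^{I}\,\mathrm{d}{\cal P}_{X_{-1}}$, whence $\sup_{t\in\mathbb{N}_T}\sup_{\omega\in\Omega}\|\boldsymbol{D}_t\|_1\le(T+1)\int\delta_{L_S}^{I}\,\mathrm{d}{\cal P}_{X_{-1}}\rightarrow0$ as $L_S\rightarrow\infty$ by Vitali's convergence theorem, using that $\{\delta_n^{I}\}$ is ${\cal P}_{X_{-1}}$-UI with $\delta_n^{I}\rightarrow0$ a.e.; the CRT II case is identical after replacing $\delta_{L_S}^{I}(X_{s-1})$ by $|b-a|^M\delta_{L_S}^{II}(X_{s-1})$.

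Finally I would pass to the normalized filters. The elementary bound $\|\boldsymbol{u}/\|\boldsymbol{u}\|_1-\boldsymbol{v}/\|\boldsymbol{v}\|_1\|_1\le2\|\boldsymbol{u}-\boldsymbol{v}\|_1/\|\boldsymbol{u}\|_1$ yields $\|{\cal E}^{L_S}(X_t|\mathscr{Y}_t)-\widetilde{{\cal E}}^{L_S}(X_t|\mathscr{Y}_t)\|_1\le2(\sup_l\|\boldsymbol{x}_{L_S}^l\|_1)\|\boldsymbol{D}_t\|_1/\|\boldsymbol{F}_t\|_1$ with $\sup_l\|\boldsymbol{x}_{L_S}^l\|_1\le M\max\{|a|,|b|\}$ uniformly in $L_S$; it then remains to bound $\|\boldsymbol{F}_t\|_1=\mathbb{E}_{\widetilde{{\cal P}}}\{\Lambda_t^{L_S}|\mathscr{Y}_t\}$ from below on $\widehat{\Omega}_T$ uniformly in $t\le T$ and $L_S$, which follows because by Assumption 1 each factor $\mathsf{L}_i(X_i^{L_S},{\bf y}_i)$ is at least $\lambda_{sup}^{-N/2}\exp(-(\|{\bf y}_i\|_2+\mu_{sup})^2/(2\lambda_{inf}))$, a $\mathscr{Y}_t$-measurable positive quantity, and on $\widehat{\Omega}_T$ the observations ${\bf y}_0,\dots,{\bf y}_T$ remain bounded --- which is precisely the denominator control already underpinning Theorem \ref{CONVERGENCE_THEOREM} --- so $\|\boldsymbol{F}_t\|_1\ge c_T>0$ for a constant $c_T$ independent of $L_S$. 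Combining, $\sup_{t\in\mathbb{N}_T}\sup_{\omega\in\widehat{\Omega}_T}\|{\cal E}^{L_S}(X_t|\mathscr{Y}_t)-\widetilde{{\cal E}}^{L_S}(X_t|\mathscr{Y}_t)\|_1\le(2M\max\{|a|,|b|\}/c_T)(T+1)\int\delta_{L_S}^{I}\,\mathrm{d}{\cal P}_{X_{-1}}\rightarrow0$, which is the first claim, and the second follows as noted. I expect the main obstacle to be the clean derivation of the perturbed recursion $\boldsymbol{F}_t=\boldsymbol{\Lambda}_t\boldsymbol{P}\boldsymbol{F}_{t-1}+\boldsymbol{\Lambda}_t\boldsymbol{b}_t$ --- specifically, justifying that the martingale-difference term integrates to zero against $\Lambda_{t-1}^{L_S}$ under $\widetilde{{\cal P}}$, where the independence structure of $\widetilde{{\cal P}}$, the $\{\mathscr{X}_t\}$-adaptedness of the quantized process, and the martingale-difference property of Lemma \ref{Doob_Lemma-1} must be combined carefully through the tower property.
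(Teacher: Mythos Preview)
Your proposal is correct and follows essentially the same route as the paper's proof: derive the perturbed recursion $\boldsymbol{F}_t=\boldsymbol{\Lambda}_t\boldsymbol{P}\boldsymbol{F}_{t-1}+\boldsymbol{\Lambda}_t\boldsymbol{b}_t$ from Lemma~\ref{Doob_Lemma-1}, compare with the exact recursion for $\widetilde{\boldsymbol{F}}_t$, unroll the difference, bound each factor via $\|\boldsymbol{\Lambda}_r\|_1\le\lambda_{inf}^{-N/2}$ and $\|\boldsymbol{P}\|_1=1$, use independence under $\widetilde{{\cal P}}$ to reduce $\|\boldsymbol{b}_s\|_1$ to $\mathbb{E}_{\widetilde{{\cal P}}}\{\|\boldsymbol{\varepsilon}_s^{L_S}\|_1\}$, invoke Vitali, and control the denominator $\|\boldsymbol{F}_t\|_1$ uniformly on $\widehat{\Omega}_T$. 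The only cosmetic differences are that the paper keeps the tighter geometric factors $\lambda_{inf}^{-N(t-\tau+1)/2}$ rather than your cruder bound by $1$, cites \cite{KalPetNonlinear_2015} directly for the denominator lower bound instead of re-deriving it, and writes $\boldsymbol{b}_t$ with conditioning on $\mathscr{Y}_{t-1}$ rather than $\mathscr{Y}_t$ (equivalent under $\widetilde{{\cal P}}$, as you note).
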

\begin{proof}[Proof of Lemma \ref{Convergence_FAKE}]
See Appendix C.
\end{proof}
Finally, the next theorem establishes precisely the form of the recursive
grid based filter, employing the marginal quantization of the state.
\begin{thm}
\label{OUR_FILTER2}\textbf{\textup{(The Marginal Filter)}} Consider
the marginal state approximation $X_{t}^{L_{S}}$ and suppose that
the state process $X_{t}$ is conditionally regular. Then, for each
$t\in\mathbb{N}$, the asymptotically optimal in $L_{S}$ approximate
filtering operator $\widetilde{{\cal E}}^{L_{S}}\left(\left.X_{t}\right|\mathscr{Y}_{t}\right)$
can be recursively expressed exactly as in Theorem \ref{Markovian_Filter},
with initial conditions as in Lemma \ref{Convergence_FAKE} and transition
matrix $\boldsymbol{P}$ defined as in \eqref{eq:Trans}.\end{thm}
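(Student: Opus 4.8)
The plan is to assemble Theorem \ref{OUR_FILTER2} directly from the results already in hand, so the proof is essentially a bookkeeping argument. First I would invoke Lemma \ref{Convergence_FAKE}: under the hypothesis that $X_{t}$ is conditionally regular (CRT I or CRT II), and with the initial distribution of the auxiliary chain $Z_{t}^{L_{S}}$ matched to that of the marginally quantized initial state via $\mathbb{E}_{\widetilde{{\cal P}}}\{Z_{-1}^{L_{S}}\}\equiv\mathbb{E}_{{\cal P}}\{{\cal Q}_{L_{S}}^{e}(X_{-1}^{L_{S}})\}$, the modified filter $\widetilde{{\cal E}}^{L_{S}}(X_{t}|\mathscr{Y}_{t})$ converges to $\mathbb{E}_{{\cal P}}\{X_{t}|\mathscr{Y}_{t}\}$ compactly in $t\in\mathbb{N}_{T}$ and uniformly on $\widehat{\Omega}_{T}$. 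This already delivers the ``asymptotically optimal in $L_{S}$'' clause of the statement; no additional argument on the optimality side is needed.

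Next I would establish that $\widetilde{{\cal E}}^{L_{S}}(X_{t}|\mathscr{Y}_{t})$ admits the claimed recursion. The key observation is that $Z_{t}^{L_{S}}$ is, by construction, a time-homogeneous finite-state Markov chain with transition matrix $\boldsymbol{P}$ given by \eqref{eq:Trans} (reflecting the marginal quantization), and it is independent of ${\bf y}_{t}$ under $\widetilde{{\cal P}}$. Hence $\widetilde{{\cal E}}^{L_{S}}$ has exactly the structure of the generic change-of-measure approximate filter \eqref{eq:CoM_Approx}/\eqref{eq:main_approx} with the Markov-chain approximation $Z_{t}^{L_{S}}$ playing the role of $X_{t}^{L_{S}}$. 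Therefore Theorem \ref{Markovian_Filter} applies verbatim to $Z_{t}^{L_{S}}$: setting $\boldsymbol{E}_{t}\triangleq\mathbb{E}_{\widetilde{{\cal P}}}\{{\cal Q}_{L_{S}}^{e}(Z_{t}^{L_{S}})\Lambda_{t}^{Z,L_{S}}|\mathscr{Y}_{t}\}$ — equivalently $\mathbb{E}_{\widetilde{{\cal P}}}\{Z_{t}^{L_{S}}\Lambda_{t}^{Z,L_{S}}|\mathscr{Y}_{t}\}$, since ${\cal Q}_{L_{S}}^{e}$ is the identity-type embedding on ${\cal V}_{L_{S}}$ — one gets $\widetilde{{\cal E}}^{L_{S}}(X_{t}|\mathscr{Y}_{t})\equiv{\bf X}\boldsymbol{E}_{t}/\|\boldsymbol{E}_{t}\|_{1}$ with $\boldsymbol{E}_{t}\equiv\boldsymbol{\Lambda}_{t}\boldsymbol{P}\boldsymbol{E}_{t-1}$ and $\boldsymbol{E}_{-1}\triangleq\mathbb{E}_{{\cal P}}\{Z_{-1}^{L_{S}}\}=\mathbb{E}_{{\cal P}}\{{\cal Q}_{L_{S}}^{e}(X_{-1}^{L_{S}})\}$, which is precisely the initialization prescribed in Lemma \ref{Convergence_FAKE}. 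This is the whole content of the theorem.

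The only genuinely delicate point — and the one I would flag as the main obstacle — is verifying that the hypotheses of Theorem \ref{Markovian_Filter} really do transfer to $Z_{t}^{L_{S}}$ without the conditional-regularity assumption leaking into the recursion itself. That is, one must be careful that $Z_{t}^{L_{S}}$ is defined as a bona fide Markov chain with transition matrix $\boldsymbol{P}$ and is independent of ${\bf y}_{t}$ under $\widetilde{{\cal P}}$ (the analogue of Remark \ref{rem:Property_EQUIV} and Lemma \ref{P_Equivalence}, which the text instructs us to adapt), so that the likelihood functional $\Lambda_{t}^{Z,L_{S}}$ and the change-of-measure machinery apply. Once this is granted, conditional regularity is used only through Lemma \ref{Convergence_FAKE} to guarantee the approximation $\widetilde{{\cal E}}^{L_{S}}\approx{\cal E}^{L_{S}}$, not in the derivation of the recursion. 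I would close the proof by noting that combining the recursive form of $\widetilde{{\cal E}}^{L_{S}}$ with the convergence in Lemma \ref{Convergence_FAKE} (on the same set $\widehat{\Omega}_{T}$ of ${\cal P}$-measure at least $1-(T+1)^{1-CN}\exp(-CN)$ from Theorem \ref{CONVERGENCE_THEOREM}) yields exactly the asserted statement, completing the argument.
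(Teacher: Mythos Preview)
Your proposal is correct and mirrors the paper's approach exactly: the paper does not give a separate proof of Theorem \ref{OUR_FILTER2} but treats it as the immediate combination of (i) the observation, stated just before Lemma \ref{Convergence_FAKE}, that $\widetilde{{\cal E}}^{L_{S}}$ ``indeed admits the recursive representation proposed in Theorem \ref{Markovian_Filter}'' because $Z_{t}^{L_{S}}$ is a finite-state Markov chain with transition matrix $\boldsymbol{P}$, and (ii) Lemma \ref{Convergence_FAKE} for asymptotic optimality. Your flagged ``delicate point'' about independence of $Z_{t}^{L_{S}}$ from ${\bf y}_{t}$ under $\widetilde{{\cal P}}$ is appropriate and is precisely what the paper handles via Remark \ref{rem:Property_EQUIV} and Lemma \ref{P_Equivalence}.
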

\begin{rem}
\textbf{(Weak Conditional Regularity)} All the derivations presented
above are still valid if, in the definition of conditional regularity
(Definition 2), one replaces almost everywhere convergence of the
sequences $\left\{ \delta_{n}^{I}\left(\cdot\right)\right\} _{n}$
and $\left\{ \delta_{n}^{II}\left(\cdot\right)\right\} _{n}$ with
\textit{convergence in probability}. This is due to the fact that
uniform integrability plus convergence in measure are necessary and
sufficient conditions for showing convergence in ${\cal L}_{1}$ (for
finite measure spaces). Consequently, if we focus on, for instance,
CRT I (CRT II is similar), it is easy to see that in order to ensure
asymptotic consistency of the marginal approximate filter in the sense
of Theorem \ref{OUR_FILTER2}, it suffices that, for ${\cal A}\in\Pi_{L_{S}}$
and for any $\epsilon>0$,
\begin{equation}
{\cal P}_{X_{t-1}}\hspace{-2pt}\left(\hspace{-1.5pt}\sup_{{\cal A}}\hspace{-2pt}\left|{\cal K}\hspace{-2pt}\left(\left.{\cal A}\right|\hspace{-2pt}\boldsymbol{x}\right)\hspace{-2pt}-\hspace{-2pt}{\cal K}\hspace{-2pt}\left(\left.{\cal A}\right|\hspace{-2pt}\in\hspace{-2pt}{\cal Z}_{L_{S}}\hspace{-2pt}\left(\boldsymbol{x}\right)\right)\right|\hspace{-2pt}>\hspace{-2pt}\dfrac{\epsilon}{L_{S}}\hspace{-1.5pt}\right)\hspace{-2pt}\hspace{-2pt}\underset{L_{S}\rightarrow\infty}{\longrightarrow}\hspace{-2pt}\hspace{-2pt}0,\hspace{-2pt}
\end{equation}
for all $t\in\mathbb{N}$ (in general), given the stochastic kernel
${\cal K}\left(\left.\cdot\right|\cdot\right)$ and for the desired
choice of the quantizer ${\cal Q}_{L_{S}}\left(\cdot\right)$. Here,
the ${\cal P}_{X_{t}}\hspace{-2pt}-\hspace{-2pt}UI$ sequence $\left\{ \delta_{n}^{I}\left(\cdot\right)\right\} _{n}$
is identified as
\begin{equation}
\delta_{L_{S}}^{I}\left(\boldsymbol{x}\right)\equiv\sup_{{\cal A}\in\Pi_{L_{S}}}L_{S}\left|{\cal K}\left(\left.{\cal A}\right|\boldsymbol{x}\right)\hspace{-2pt}-\hspace{-2pt}{\cal K}\left(\left.{\cal A}\right|\hspace{-2pt}\in\hspace{-2pt}{\cal Z}_{L_{S}}\left(\boldsymbol{x}\right)\right)\right|,
\end{equation}
for all $L_{S}\in\mathbb{N}^{+}$ and for almost all $\boldsymbol{x}\in\mathbb{R}^{M\times1}$.
In other words, it is required that, for any $\epsilon>0$,
\begin{equation}
\sup_{{\cal A}\in\Pi_{L_{S}}}\left|{\cal K}\left(\left.{\cal A}\right|X_{t-1}\right)\hspace{-2pt}-\hspace{-2pt}{\cal K}\left(\left.{\cal A}\right|\hspace{-2pt}\in\hspace{-2pt}{\cal Z}_{L_{S}}\left(X_{t-1}\right)\right)\right|\hspace{-2pt}\le\hspace{-2pt}\dfrac{\epsilon}{L_{S}},
\end{equation}
with probability at least $1\hspace{-2pt}-\hspace{-2pt}\pi_{t-1}\hspace{-2pt}\left(\epsilon,L_{S}\right)$,
for all $t\hspace{-2pt}\in\hspace{-2pt}\mathbb{N}$ (in general),
where, for each $t,$ $\left\{ \pi_{t-1}\hspace{-2pt}\left(\epsilon,n\right)\right\} _{n\in\mathbb{N}^{+}}$
constitutes a sequence vanishing at infinity. This is a considerably
weaker form of conditional regularity, as stated in Definition 2.\hfill{}\ensuremath{\blacksquare}
\end{rem}

\subsection{Extensions: State Functionals \& Approximate Prediction}

All the results presented so far can be extended as follows. First,
if $\left\{ \boldsymbol{\phi}_{t}:\mathbb{R}^{M\times1}\mapsto\mathbb{R}^{M_{\phi_{t}}\times1}\right\} _{t\in\mathbb{N}}$
is a family of bounded and continuous functions, it is easy to show
that every relevant theorem presented so far is still true if one
replaces $X_{t}$ by $\boldsymbol{\phi}_{t}\left(X_{t}\right)$ in
the respective formulations of the approximate filters discussed.
This is made possible by observing that \eqref{eq:Changed_Expectation}
still holds if we replace $X_{t}$ by $\boldsymbol{\phi}_{t}\left(X_{t}\right)$,
by invoking the Continuous Mapping Theorem and using the boundedness
of $\boldsymbol{\phi}_{t}\left(X_{t}\right)$, instead of the boundedness
of $X_{t}$, whenever required.

Second, exploiting very similar arguments as in the previous sections,
it is possible to derive asymptotically optimal $\rho$-step state
predictors, where $\rho>0$ denotes the desired (and finite) prediction
horizon. In particular, under the usual assumptions \cite{Elliott1994Hidden},
it is easy to show that, as in the filtering case, the optimal nonlinear
temporal predictor $\mathbb{E}_{{\cal P}}\left\{ \left.X_{t+\rho}\right|\mathscr{Y}_{t}\right\} $
can be expressed through the Feynman-Kac type of formula
\begin{equation}
\mathbb{E}_{{\cal P}}\left\{ \left.X_{t+\rho}\right|\mathscr{Y}_{t}\right\} \equiv\dfrac{\mathbb{E}_{\widetilde{{\cal P}}}\left\{ \left.X_{t+\rho}\Lambda_{t}\right|\mathscr{Y}_{t}\right\} }{\mathbb{E}_{\widetilde{{\cal P}}}\left\{ \left.\Lambda_{t}\right|\mathscr{Y}_{t}\right\} },\quad\forall t\in\mathbb{N}.
\end{equation}
Therefore, in analogy to \eqref{eq:main_approx}, it is reasonable
to consider grid based approximations of the form
\begin{equation}
{\cal E}^{L_{S}}\left(\left.X_{t+\rho}\right|\mathscr{Y}_{t}\right)\equiv\dfrac{{\bf X}\mathbb{E}_{\widetilde{{\cal P}}}\left\{ \left.{\cal Q}_{L_{S}}^{e}\left(X_{t+\rho}^{L_{S}}\right)\Lambda_{t}^{L_{S}}\right|\mathscr{Y}_{t}\right\} }{\mathbb{E}_{\widetilde{{\cal P}}}\left\{ \left.\Lambda_{t}^{L_{S}}\right|\mathscr{Y}_{t}\right\} },\label{eq:Pred_1}
\end{equation}
for all $t\in\mathbb{N}$. Focusing on marginal state quantizations
(the Markovian case is similar, albeit easier), then, exploiting Lemma
\ref{Doob_Lemma-1} and using induction, it is easy to show that
\begin{align}
{\cal Q}_{L_{S}}^{e}\left(X_{t+\rho}^{L_{S}}\right) & \equiv\boldsymbol{P}^{\rho}{\cal Q}_{L_{S}}^{e}\left(X_{t}^{L_{S}}\right)+\sum_{i=1}^{\rho}\boldsymbol{P}^{\rho-i}\boldsymbol{{\cal M}}_{t+i}^{e}+\sum_{i=1}^{\rho}\boldsymbol{P}^{\rho-i}\boldsymbol{\varepsilon}_{t+i}^{L_{S}},\quad\forall t\in\mathbb{N}.
\end{align}
Thus, using simple properties of MD sequences, it follows that the
numerator of the fraction on the RHS of \eqref{eq:Pred_1} can be
decomposed as
\begin{align}
{\cal E}^{L_{S}}\left(\left.X_{t+\rho}\right|\mathscr{Y}_{t}\right) & \equiv\dfrac{{\bf X}\boldsymbol{P}^{\rho}\mathbb{E}_{\widetilde{{\cal P}}}\left\{ \left.{\cal Q}_{L_{S}}^{e}\left(X_{t+\rho}^{L_{S}}\right)\Lambda_{t}^{L_{S}}\right|\mathscr{Y}_{t}\right\} }{\mathbb{E}_{\widetilde{{\cal P}}}\left\{ \left.\Lambda_{t}^{L_{S}}\right|\mathscr{Y}_{t}\right\} }+\dfrac{{\displaystyle {\bf X}\sum_{i=1}^{\rho}\boldsymbol{P}^{\rho-i}\mathbb{E}_{\widetilde{{\cal P}}}\left\{ \left.\boldsymbol{\varepsilon}_{t+i}^{L_{S}}\Lambda_{t}^{L_{S}}\right|\mathscr{Y}_{t}\right\} }}{\mathbb{E}_{\widetilde{{\cal P}}}\left\{ \left.\Lambda_{t}^{L_{S}}\right|\mathscr{Y}_{t}\right\} }.\label{eq:Pred_2}
\end{align}
The first term on the RHS of \eqref{eq:Pred_2} is analyzed exactly
as in the proof of Lemma \ref{Convergence_FAKE}. For the second term,
it is true that
\begin{align}
\left\Vert {\displaystyle {\bf X}\sum_{i=1}^{\rho}\boldsymbol{P}^{\rho-i}\mathbb{E}_{\widetilde{{\cal P}}}\left\{ \left.\boldsymbol{\varepsilon}_{t+i}^{L_{S}}\Lambda_{t}^{L_{S}}\right|\mathscr{Y}_{t}\right\} }\right\Vert _{1} & \le M\gamma\sqrt{\lambda_{inf}^{-N\left(t+1\right)}}\sum_{i=1}^{\rho}\mathbb{E}_{\widetilde{{\cal P}}}\left\{ \left\Vert \boldsymbol{\varepsilon}_{t+i}^{L_{S}}\right\Vert _{1}\right\} ,
\end{align}
which can be treated as an extra error term, also in the fashion of
Lemma \ref{Convergence_FAKE}.

Putting it altogether (state functionals plus prediction), the following
general theorem holds, covering every aspect of the investigation
presented in this paper.
\begin{thm}
\label{Functionals}\textbf{\textup{(Grid Based Filtering/Prediction
\& Functionals of the State)}} For any deterministic functional family
$\left\{ \boldsymbol{\phi}_{t}:\mathbb{R}^{M\times1}\mapsto\mathbb{R}^{M_{\phi_{t}}\times1}\right\} _{t\in\mathbb{N}}$
with bounded and continuous members and any finite prediction horizon
$\rho\ge0$, the strictly optimal filter and $\rho$-step predictor
of the \textbf{transformed} \textbf{process} $\boldsymbol{\phi}_{t}\left(X_{t}\right)$
can be approximated as
\begin{equation}
{\cal E}^{L_{S}}\left(\left.\boldsymbol{\phi}_{t+\rho}\left(X_{t+\rho}\right)\right|\mathscr{Y}_{t}\right)\triangleq\boldsymbol{\Phi}_{t+\rho}\dfrac{\boldsymbol{P}^{\rho}\boldsymbol{E}_{t}}{\left\Vert \boldsymbol{E}_{t}\right\Vert _{1}}\in\mathbb{R}^{M_{\phi_{t}}\times1},\label{eq:Spatial_2}
\end{equation}
for all $t\in\mathbb{N}$, where the process $E_{t}\in\mathbb{R}^{L_{S}\times1}$
can be recursively evaluated as in Theorem \ref{Markovian_Filter},
$\boldsymbol{P}$ is defined according to the chosen state quantization
and 
\begin{equation}
\boldsymbol{\Phi}_{t+\rho}\triangleq\left[\boldsymbol{\phi}_{t+\rho}\left(\boldsymbol{x}_{L_{S}}^{1}\right)\,\ldots\,\boldsymbol{\phi}_{t+\rho}\left(\boldsymbol{x}_{L_{S}}^{L_{S}}\right)\right]\in\mathbb{R}^{M_{\phi_{t}}\times L_{S}}.
\end{equation}
Additionally, under the appropriate assumptions (see Lemma \ref{Markovian_Convergence}
and Lemma \ref{Convergence_FAKE}, respectively) the approximate filter
is asymptotically optimal. in the sense of Theorem \ref{CONVERGENCE_THEOREM}.
\end{thm}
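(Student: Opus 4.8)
The plan is to prove the two assertions of the theorem in turn: first the closed, recursive form \eqref{eq:Spatial_2}, which is a purely linear-algebraic consequence of the finiteness of the quantization grid, and then its asymptotic optimality, which combines the Continuous Mapping Theorem with the convergence machinery already developed in Theorems \ref{CONVERGENCE_THEOREM}, \ref{CONVERGENCE_THEOREM_2}, \ref{Markovian_Filter} and Lemmata \ref{Doob_Lemma-1}, \ref{Convergence_FAKE}.

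For the closed form, I would start from the Feynman--Kac type representation of the approximate $\rho$-step predictor of $\boldsymbol{\phi}_{t+\rho}(X_{t+\rho})$, i.e. the ratio \eqref{eq:Pred_1} but with $\boldsymbol{\phi}_{t+\rho}(X_{t+\rho}^{L_S})$ in the numerator. Since $X_{t+\rho}^{L_S}$ takes values in the finite set ${\cal X}_{L_S}$, one has the identity $\boldsymbol{\phi}_{t+\rho}(X_{t+\rho}^{L_S})\equiv\boldsymbol{\Phi}_{t+\rho}{\cal Q}_{L_S}^{e}(X_{t+\rho}^{L_S})$, reducing everything to the (vector-valued) conditional expectation of ${\cal Q}_{L_S}^{e}(X_{t+\rho}^{L_S})\Lambda_t^{L_S}$. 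For the Markovian quantization, ${\cal Q}_{L_S}^{e}(X_t^{L_S})$ is the one-hot encoding of a finite-state Markov chain with transition matrix $\boldsymbol{P}$, so ${\cal Q}_{L_S}^{e}(X_{t+\rho}^{L_S})\equiv\boldsymbol{P}^{\rho}{\cal Q}_{L_S}^{e}(X_t^{L_S})+\sum_{i=1}^{\rho}\boldsymbol{P}^{\rho-i}\boldsymbol{{\cal M}}_{t+i}^{e}$ with the $\boldsymbol{{\cal M}}_{t+i}^{e}$ being $\mathscr{X}_{t+i}$-martingale differences; since under $\widetilde{{\cal P}}$ the state process is independent of the observations, the standard MD property gives $\mathbb{E}_{\widetilde{{\cal P}}}\{\boldsymbol{{\cal M}}_{t+i}^{e}\Lambda_t^{L_S}\,|\,\mathscr{Y}_t\}\equiv{\bf 0}$ for $i\ge1$, leaving $\mathbb{E}_{\widetilde{{\cal P}}}\{{\cal Q}_{L_S}^{e}(X_{t+\rho}^{L_S})\Lambda_t^{L_S}\,|\,\mathscr{Y}_t\}\equiv\boldsymbol{P}^{\rho}\boldsymbol{E}_t$. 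Because ${\cal Q}_{L_S}^{e}(\cdot)$ has nonnegative entries summing to one and $\Lambda_t^{L_S}\ge0$, the denominator equals ${\bf 1}^{\boldsymbol{T}}\boldsymbol{E}_t=\left\Vert\boldsymbol{E}_t\right\Vert_1$; combining with the recursion $\boldsymbol{E}_t\equiv\boldsymbol{\Lambda}_t\boldsymbol{P}\boldsymbol{E}_{t-1}$ of Theorem \ref{Markovian_Filter} yields \eqref{eq:Spatial_2}. For the marginal quantization one works instead with the auxiliary chain $Z_t^{L_S}$, whose one-hot encoding obeys the same exact recursion with $\boldsymbol{P}$ given by \eqref{eq:Trans}; \eqref{eq:Spatial_2} then computes precisely $\widetilde{{\cal E}}^{L_S}(\boldsymbol{\phi}_{t+\rho}(X_{t+\rho})\,|\,\mathscr{Y}_t)$, by the identical argument.

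For asymptotic optimality, first note that the Feynman--Kac identity \eqref{eq:Changed_Expectation}, and its predictive version stated above, remain valid with $\boldsymbol{\phi}_{t+\rho}(X_{t+\rho})$ in place of $X_{t+\rho}$, since boundedness of $\boldsymbol{\phi}_{t+\rho}$ makes $\boldsymbol{\phi}_{t+\rho}(X_{t+\rho})$ integrable. Next, Lemma \ref{Markovian_Convergence} (Markovian case) and Lemma \ref{Marginal_Convergence} (marginal case) give $X_{t+\rho}^{L_S}\to X_{t+\rho}$ almost surely; by the Continuous Mapping Theorem and continuity of $\boldsymbol{\phi}_{t+\rho}$ this propagates to $\boldsymbol{\phi}_{t+\rho}(X_{t+\rho}^{L_S})\to\boldsymbol{\phi}_{t+\rho}(X_{t+\rho})$ a.s., hence in probability, while boundedness of $\boldsymbol{\phi}_{t+\rho}$ guarantees the limit is a.s. compactly supported. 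These are exactly the inputs needed to run the argument behind Theorem \ref{CONVERGENCE_THEOREM}/Theorem \ref{CONVERGENCE_THEOREM_2} (via \cite{KalPetNonlinear_2015}) with $\boldsymbol{\phi}_{t+\rho}(\cdot)$ replacing the identity: the only structural change is the time-index shift between the predicted variable (at $t+\rho$) and the likelihood ratio $\Lambda_t^{L_S}$, which is absorbed exactly as in the decomposition \eqref{eq:Pred_2}, the first (``clean'') term being handled verbatim as in the proof of Lemma \ref{Convergence_FAKE}. This shows the clean part of \eqref{eq:Spatial_2} converges to $\mathbb{E}_{{\cal P}}\{\boldsymbol{\phi}_{t+\rho}(X_{t+\rho})\,|\,\mathscr{Y}_t\}$, compactly in $t\in\mathbb{N}_T$ and uniformly on the event $\widehat{\Omega}_T$ of Theorem \ref{CONVERGENCE_THEOREM}.

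Finally, for the marginal case one must also control the accumulated bias $\boldsymbol{\Phi}_{t+\rho}\sum_{i=1}^{\rho}\boldsymbol{P}^{\rho-i}\mathbb{E}_{\widetilde{{\cal P}}}\{\boldsymbol{\varepsilon}_{t+i}^{L_S}\Lambda_t^{L_S}\,|\,\mathscr{Y}_t\}$ divided by $\mathbb{E}_{\widetilde{{\cal P}}}\{\Lambda_t^{L_S}\,|\,\mathscr{Y}_t\}$. Using that $\boldsymbol{P}$ is column-stochastic (so $\boldsymbol{P}^{\rho-i}$ is $\ell_1$-nonexpansive), that $\boldsymbol{\Phi}_{t+\rho}$ has entries uniformly bounded by $\sup_{\boldsymbol{x}}\left\Vert\boldsymbol{\phi}_{t+\rho}(\boldsymbol{x})\right\Vert_1<\infty$ (uniform in $t$ over the finite range $t\le T$), and the lower bound on $\mathbb{E}_{\widetilde{{\cal P}}}\{\Lambda_t^{L_S}\,|\,\mathscr{Y}_t\}$ valid on $\widehat{\Omega}_T$, this term is dominated by a constant multiple of $\sum_{i=1}^{\rho}\mathbb{E}_{\widetilde{{\cal P}}}\{\left\Vert\boldsymbol{\varepsilon}_{t+i}^{L_S}\right\Vert_1\}$, exactly as in the display preceding the theorem. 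Conditional regularity (Definition \ref{Cond_Reg}) and Lemma \ref{Doob_Lemma-1} bound $\left\Vert\boldsymbol{\varepsilon}_{t+i}^{L_S}\right\Vert_1$ by $\delta_{L_S}^{I}(X_{t+i-1})$ (CRT I) or $\left|b-a\right|^{M}\delta_{L_S}^{II}(X_{t+i-1})$ (CRT II); since these $\delta$-sequences are ${\cal P}_{X_{-1}}$-uniformly integrable and converge to zero a.e., uniform integrability plus a.e. convergence yields $\mathbb{E}_{\widetilde{{\cal P}}}\{\left\Vert\boldsymbol{\varepsilon}_{t+i}^{L_S}\right\Vert_1\}\to0$, and taking the finite sum over $i\le\rho$ and the supremum over $t\le T$ preserves this, so the bias vanishes uniformly. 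A triangle inequality as in \eqref{eq:filters_proof_7} then transfers the convergence from $\widetilde{{\cal E}}^{L_S}(\boldsymbol{\phi}_{t+\rho}(X_{t+\rho})\,|\,\mathscr{Y}_t)$ — the quantity computed by \eqref{eq:Spatial_2} — to $\mathbb{E}_{{\cal P}}\{\boldsymbol{\phi}_{t+\rho}(X_{t+\rho})\,|\,\mathscr{Y}_t\}$, establishing the claim. I expect the main obstacle to be precisely this last step: keeping the multi-step, $\boldsymbol{P}$- and $\boldsymbol{\Phi}$-weighted bias under control uniformly in $(t,\omega)\in\mathbb{N}_T\times\widehat{\Omega}_T$, which forces one to interlock the uniform-integrability argument for the $\delta$'s with the nondegeneracy of the normalizer on the high-probability event $\widehat{\Omega}_T$; by contrast, the functional ingredient, though conceptually the genuinely new element, is technically light once continuity (for the Continuous Mapping Theorem) and boundedness (for compact support and for bounding $\boldsymbol{\Phi}_{t+\rho}$) are invoked.
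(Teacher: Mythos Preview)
Your proposal is correct and follows essentially the same approach as the paper: the argument for Theorem \ref{Functionals} is sketched in the text of Section \ref{sec:CORE_SECTION}.C immediately preceding the statement, and your proof reproduces exactly those ingredients---the Continuous Mapping Theorem plus boundedness to handle the functional $\boldsymbol{\phi}_{t+\rho}$, the inductive MD-type expansion ${\cal Q}_{L_S}^{e}(X_{t+\rho}^{L_S})\equiv\boldsymbol{P}^{\rho}{\cal Q}_{L_S}^{e}(X_t^{L_S})+\sum_i\boldsymbol{P}^{\rho-i}\boldsymbol{{\cal M}}_{t+i}^{e}+\sum_i\boldsymbol{P}^{\rho-i}\boldsymbol{\varepsilon}_{t+i}^{L_S}$ for prediction, and the treatment of the accumulated bias term via the bound displayed just before the theorem together with the uniform-integrability machinery of Lemma \ref{Convergence_FAKE}. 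Your identification of the ``main obstacle'' (controlling the multi-step bias uniformly on $\mathbb{N}_T\times\widehat{\Omega}_T$) matches precisely what the paper flags as the extra error term to be handled ``in the fashion of Lemma \ref{Convergence_FAKE}.''
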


\begin{rem}
\label{rem:UNIFORMITY}As Theorem \ref{Functionals} clearly states,
for each choice of state functionals and any finite prediction horizon,
convergence of the respective approximate grid based filters is in
the sense of Theorem \ref{CONVERGENCE_THEOREM}. This implies the
existence of an exceptional measurable set of measure almost unity,
inside of which convergence is in the uniform sense. It is important
to emphasize that this exceptional event, $\widehat{\Omega}_{T}$,
as well as its measure, are \textit{independent of the particular
choice of both the bounded family $\left\{ \boldsymbol{\phi}_{t}\right\} _{t}$
and the prediction horizon $\rho$}. This fact can be easily verified
by a quick detour of the proof of Theorem \ref{CONVERGENCE_THEOREM}
in \cite{KalPetNonlinear_2015}. In particular, for any fixed choice
of $T$, $\widehat{\Omega}_{T}$ characterizes exclusively the growth
of the observations ${\bf y}_{t}$, which are the same regardless
of filtering, prediction, or any functional imposed on the state.
Therefore, \textit{stochastically uniform (in $\widehat{\Omega}_{T}$)
convergence of one estimator implies stochastically uniform convergence
of any other estimator}, within any class of estimators, constructed
employing any uniformly bounded and continuous class of functionals
of the state and finite prediction horizons.\hfill{}\ensuremath{\blacksquare}
\end{rem}

\subsection{Filter Performance}

The uncertainty of a filtering estimator can be quantified via its
\textit{posterior} quadratic deviation from the true state, at each
time $t$. This information is encoded into the posterior covariance
matrix
\begin{align}
\hspace{-2pt}\hspace{-2pt}\hspace{-2pt}\hspace{-2pt}\hspace{-2pt}\hspace{-2pt}\mathbb{V}\hspace{-2pt}\left\{ \hspace{-2pt}\left.X_{t}\right|\mathscr{Y}_{t}\right\} \equiv\mathbb{E}\left\{ \hspace{-2pt}\left.X_{t}X_{t}^{\boldsymbol{T}}\right|\mathscr{Y}_{t}\right\} \hspace{-2pt}-\hspace{-2pt}\mathbb{E}\left\{ \hspace{-2pt}\left.X_{t}\right|\mathscr{Y}_{t}\right\} \hspace{-2pt}\left(\mathbb{E}\left\{ \hspace{-2pt}\left.X_{t}\right|\mathscr{Y}_{t}\right\} \right)^{\boldsymbol{T}}\hspace{-2pt}\hspace{-1pt},\hspace{-2pt}\hspace{-2pt}\hspace{-2pt}\hspace{-2pt}\hspace{-2pt}\label{eq:FIRST}
\end{align}
for all $t\in\mathbb{N}$. Next, in a general setting, we consider
asymptotically consistent approximations of $\mathbb{V}\left\{ \hspace{-2pt}\left.\boldsymbol{\phi}_{t+\rho}\left(X_{t+\rho}\right)\right|\mathscr{Y}_{t}\right\} $,
which, at the same time, admit finite dimensional representations.
In the following, $\left\Vert \cdot\right\Vert _{1}^{\mathsf{E}}$
denotes the \textit{entrywise} $\ell_{1}$-norm for matrices, which
upper bounds both the $\ell_{1}$-operator-induced and the Frobenius
norms.
\begin{thm}
\label{Covariance}\textbf{\textup{(Posterior Covariance Recursions)}}
Under the same setting as in Theorem \ref{Functionals}, the posterior
covariance matrix of the optimal filter of the transformed process
$\boldsymbol{\phi}_{t+\rho}\left(X_{t+\rho}\right)$ can be approximated
as
\begin{flalign}
{\cal V}^{L_{S}}\left(\hspace{-2pt}\left.\boldsymbol{\phi}_{t+\rho}\left(X_{t+\rho}\right)\right|\mathscr{Y}_{t}\right) & \triangleq\boldsymbol{\Phi}_{t+\rho}\mbox{\ensuremath{\left[\mathrm{diag}\hspace{-2pt}\left(\dfrac{\boldsymbol{P}^{\rho}\boldsymbol{E}_{t}}{\left\Vert \boldsymbol{E}_{t}\right\Vert _{1}}\right)\hspace{-2pt}-\hspace{-2pt}\dfrac{\boldsymbol{P}^{\rho}\boldsymbol{E}_{t}}{\left\Vert \boldsymbol{E}_{t}\right\Vert _{1}}\hspace{-2pt}\left(\dfrac{\boldsymbol{P}^{\rho}\boldsymbol{E}_{t}}{\left\Vert \boldsymbol{E}_{t}\right\Vert _{1}}\right)^{\boldsymbol{T}}\right]}}\boldsymbol{\Phi}_{t+\rho}^{\boldsymbol{T}},\label{eq:Spatial_2-1}
\end{flalign}
for all $t\hspace{-2pt}\in\hspace{-2pt}\mathbb{N}$. Under the appropriate
assumptions (Lemma \ref{Markovian_Convergence}/\ref{Convergence_FAKE}),
the approximate estimator is asymptotically optimal in the sense of
Theorem \ref{CONVERGENCE_THEOREM}.\end{thm}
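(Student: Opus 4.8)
The plan is to show that the right-hand side of \eqref{eq:Spatial_2-1} is nothing but the \emph{exact} posterior covariance of the grid based approximate model, i.e.\ the naive plug-in of the grid based approximate filters/predictors of $\boldsymbol{\phi}_{t+\rho}\left(X_{t+\rho}\right)$ and of its second moment $\boldsymbol{\phi}_{t+\rho}\left(X_{t+\rho}\right)\boldsymbol{\phi}_{t+\rho}\left(X_{t+\rho}\right)^{\boldsymbol{T}}$, and then to pass to the limit term by term, invoking Theorem \ref{Functionals} (hence Theorem \ref{CONVERGENCE_THEOREM}) for each of these two functionals on a \emph{common} exceptional event. Finite dimensionality and recursivity of ${\cal V}^{L_{S}}$ are then immediate, since its defining expression is built entirely from the recursively computable vector $\boldsymbol{E}_{t}$ of Theorem \ref{Markovian_Filter}, the matrix $\boldsymbol{P}$, and the finite matrix $\boldsymbol{\Phi}_{t+\rho}$.

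First I would isolate the single algebraic fact responsible for the $\mathrm{diag}\left(\cdot\right)$ structure: for every $\omega$, the vector ${\cal Q}_{L_{S}}^{e}\left(X_{t+\rho}^{L_{S}}\right)$ is one of the canonical basis vectors ${\bf e}_{i}^{L_{S}}$, so that ${\cal Q}_{L_{S}}^{e}\left(X_{t+\rho}^{L_{S}}\right){\cal Q}_{L_{S}}^{e}\left(X_{t+\rho}^{L_{S}}\right)^{\boldsymbol{T}}\equiv\mathrm{diag}\left({\cal Q}_{L_{S}}^{e}\left(X_{t+\rho}^{L_{S}}\right)\right)$; combining this with $\boldsymbol{\phi}_{t+\rho}\left(X_{t+\rho}^{L_{S}}\right)\equiv\boldsymbol{\Phi}_{t+\rho}{\cal Q}_{L_{S}}^{e}\left(X_{t+\rho}^{L_{S}}\right)$ yields
\[
\boldsymbol{\phi}_{t+\rho}\left(X_{t+\rho}^{L_{S}}\right)\boldsymbol{\phi}_{t+\rho}\left(X_{t+\rho}^{L_{S}}\right)^{\boldsymbol{T}}\equiv\boldsymbol{\Phi}_{t+\rho}\,\mathrm{diag}\left({\cal Q}_{L_{S}}^{e}\left(X_{t+\rho}^{L_{S}}\right)\right)\boldsymbol{\Phi}_{t+\rho}^{\boldsymbol{T}}.
\]
Applying the normalized change of measure $\mathbb{E}_{\widetilde{{\cal P}}}\left\{ \left.\cdot\,\Lambda_{t}^{L_{S}}\right|\mathscr{Y}_{t}\right\} /\mathbb{E}_{\widetilde{{\cal P}}}\left\{ \left.\Lambda_{t}^{L_{S}}\right|\mathscr{Y}_{t}\right\} $, pulling out the deterministic factor $\boldsymbol{\Phi}_{t+\rho}$, and using that $\mathrm{diag}\left(\cdot\right)$ commutes with conditional expectation, the very computation carried out in the derivation of Theorem \ref{Functionals} (eqs.\ \eqref{eq:Pred_2}--\eqref{eq:Spatial_2}) shows that the first term of \eqref{eq:Spatial_2-1} equals $\boldsymbol{\Phi}_{t+\rho}\mathrm{diag}\left(\boldsymbol{P}^{\rho}\boldsymbol{E}_{t}/\left\Vert \boldsymbol{E}_{t}\right\Vert _{1}\right)\boldsymbol{\Phi}_{t+\rho}^{\boldsymbol{T}}$, i.e.\ it \emph{is} the $L_{S}$-grid approximate filter/predictor, in the sense of Theorem \ref{Functionals}, of the (vectorized) bounded and continuous functional $\boldsymbol{x}\mapsto\boldsymbol{\phi}_{t+\rho}\left(\boldsymbol{x}\right)\boldsymbol{\phi}_{t+\rho}\left(\boldsymbol{x}\right)^{\boldsymbol{T}}$. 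Since, by \eqref{eq:Spatial_2}, the second term of \eqref{eq:Spatial_2-1} is exactly the outer product ${\cal E}^{L_{S}}\left(\left.\boldsymbol{\phi}_{t+\rho}\left(X_{t+\rho}\right)\right|\mathscr{Y}_{t}\right){\cal E}^{L_{S}}\left(\left.\boldsymbol{\phi}_{t+\rho}\left(X_{t+\rho}\right)\right|\mathscr{Y}_{t}\right)^{\boldsymbol{T}}$, the defining right-hand side of \eqref{eq:Spatial_2-1} is formally identical to \eqref{eq:FIRST}, with every optimal conditional expectation replaced by its grid based counterpart.

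Asymptotic optimality then follows term by term. The matrix-valued map $\boldsymbol{\phi}_{t+\rho}\left(\cdot\right)\boldsymbol{\phi}_{t+\rho}\left(\cdot\right)^{\boldsymbol{T}}$ is bounded and continuous precisely because $\boldsymbol{\phi}_{t+\rho}$ is, so Theorem \ref{Functionals} --- and hence Theorem \ref{CONVERGENCE_THEOREM} --- applies both to it and to $\boldsymbol{\phi}_{t+\rho}$, and, by Remark \ref{rem:UNIFORMITY}, on the \emph{same} exceptional set $\widehat{\Omega}_{T}$ with the same probability bound, since $\widehat{\Omega}_{T}$ depends only on the growth of ${\bf y}_{t}$. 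Hence, uniformly over $t\in\mathbb{N}_{T}$ and $\omega\in\widehat{\Omega}_{T}$, the first term of \eqref{eq:Spatial_2-1} converges to $\mathbb{E}_{{\cal P}}\left\{ \left.\boldsymbol{\phi}_{t+\rho}\left(X_{t+\rho}\right)\boldsymbol{\phi}_{t+\rho}\left(X_{t+\rho}\right)^{\boldsymbol{T}}\right|\mathscr{Y}_{t}\right\} $ while ${\cal E}^{L_{S}}\left(\left.\boldsymbol{\phi}_{t+\rho}\left(X_{t+\rho}\right)\right|\mathscr{Y}_{t}\right)\to\mathbb{E}_{{\cal P}}\left\{ \left.\boldsymbol{\phi}_{t+\rho}\left(X_{t+\rho}\right)\right|\mathscr{Y}_{t}\right\} $ in $\ell_{1}$. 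For the second, outer-product term one uses the elementary bound $\left\Vert \boldsymbol{a}\boldsymbol{a}^{\boldsymbol{T}}-\boldsymbol{b}\boldsymbol{b}^{\boldsymbol{T}}\right\Vert _{1}^{\mathsf{E}}\le\left\Vert \boldsymbol{a}-\boldsymbol{b}\right\Vert _{1}\left(\left\Vert \boldsymbol{a}\right\Vert _{1}+\left\Vert \boldsymbol{b}\right\Vert _{1}\right)$ together with the uniform estimate $\left\Vert {\cal E}^{L_{S}}\left(\left.\boldsymbol{\phi}_{t+\rho}\left(X_{t+\rho}\right)\right|\mathscr{Y}_{t}\right)\right\Vert _{1}\le\sup_{\boldsymbol{x}\in{\cal Z}}\left\Vert \boldsymbol{\phi}_{t+\rho}\left(\boldsymbol{x}\right)\right\Vert _{1}<\infty$ (and the identical bound for the true filter), so that the outer products converge as well, uniformly on $\widehat{\Omega}_{T}$ and compactly in $t$. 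A triangle inequality applied to $\left\Vert {\cal V}^{L_{S}}\left(\left.\boldsymbol{\phi}_{t+\rho}\left(X_{t+\rho}\right)\right|\mathscr{Y}_{t}\right)-\mathbb{V}\left\{ \left.\boldsymbol{\phi}_{t+\rho}\left(X_{t+\rho}\right)\right|\mathscr{Y}_{t}\right\} \right\Vert _{1}^{\mathsf{E}}$ --- a norm that dominates the $\ell_{1}$-operator-induced and Frobenius norms --- then closes the argument.

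The one step requiring genuine care, and the part I expect to be the main (if modest) obstacle, is the \emph{marginal} quantization: there $\boldsymbol{E}_{t}$ is the filter of the auxiliary chain $Z_{t}^{L_{S}}$, so ${\cal V}^{L_{S}}$ is really the plug-in built from $\widetilde{{\cal E}}^{L_{S}}$ rather than from ${\cal E}^{L_{S}}$, and one must re-run the estimate of Lemma \ref{Convergence_FAKE} with the extra bounded continuous factor $\boldsymbol{\phi}_{t+\rho}\left(\cdot\right)\boldsymbol{\phi}_{t+\rho}\left(\cdot\right)^{\boldsymbol{T}}$ present, together with the prediction-horizon correction $\sum_{i=1}^{\rho}\boldsymbol{P}^{\rho-i}\boldsymbol{\varepsilon}_{t+i}^{L_{S}}$ appearing in \eqref{eq:Pred_2}. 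Since the bias process $\boldsymbol{\varepsilon}_{t}^{L_{S}}$ of Lemma \ref{Doob_Lemma-1} vanishes $\widetilde{{\cal P}}$-a.s.\ under conditional regularity and does not involve the functional, this is a routine repetition of the proof of Lemma \ref{Convergence_FAKE}; all remaining steps are linear algebra.
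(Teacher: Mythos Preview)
Your proposal is correct and matches the paper's approach: split $\mathcal{V}^{L_S}-\mathbb{V}$ into a second-moment difference (handled by recognizing each entry as the approximate filter of a bounded continuous functional, so Theorem \ref{Functionals} applies on the common $\widehat{\Omega}_T$ via Remark \ref{rem:UNIFORMITY}) and an outer-product difference (handled by $\|\boldsymbol{a}\boldsymbol{a}^{\boldsymbol{T}}-\boldsymbol{b}\boldsymbol{b}^{\boldsymbol{T}}\|_1^{\mathsf{E}}\le(\|\boldsymbol{a}\|_1+\|\boldsymbol{b}\|_1)\|\boldsymbol{a}-\boldsymbol{b}\|_1$ together with uniform boundedness). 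The paper simplifies to $\boldsymbol{\phi}_t\equiv\mathrm{id}$ and verifies the $\mathrm{diag}$ form entrywise rather than via your cleaner identity ${\bf e}_i{\bf e}_i^{\boldsymbol{T}}=\mathrm{diag}({\bf e}_i)$, and is less explicit than you about the marginal-quantization caveat (it simply cites Lemma \ref{Convergence_FAKE}), but the substance is identical.
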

\begin{proof}[Proof of Theorem \ref{Covariance}]
See Appendix D.
\end{proof}

\section{Analytical Examples \& Some Simulations}

This section is centered around a discussion about the practical applicability
of the grid based filters under consideration, mainly in regard to
filter implementation, as well as the sufficient conditions for asymptotic
optimality presented and analyzed in Section \ref{sec:CORE_SECTION}.
In what follows, we consider a class of $1$-dimensional (for simplicity),
common and rather practically important \textit{additive} \textit{Nonlinear
AutoRegressions} \textit{(NARs)}, where $X_{t}$ evolves according
to the stochastic difference equation
\begin{equation}
X_{t}\equiv h\left(X_{t-1}\right)+W_{t},\quad\forall t\in\mathbb{N},\label{eq:NAR}
\end{equation}
$X_{-1}\sim{\cal P}_{X_{-1}}$, where $h\left(\cdot\right)$ constitutes
a uniformly bounded and at least continuous nonlinear functional and
$W_{t}$ is a white noise process with known measure. To ensure that
the state is bounded, we will assume that the white noise $W_{t}$
follows, for each $t\in\mathbb{N}$, a zero location (and mean), \textit{truncated
Gaussian distribution} in $\left[-\alpha,\alpha\right]$, with scale
$\sigma$ and with density
\begin{equation}
f_{W}\left(x\right)\triangleq\dfrac{\varphi\left(x/\sigma\right)}{2\sigma\varPhi\left(\alpha/\sigma\right)-\sigma}\mathds{1}_{\left[-\alpha,\alpha\right]}\left(x\right),\;\forall x\in\mathbb{R},
\end{equation}
where $\varphi\left(\cdot\right)$ and $\varPhi\left(\cdot\right)$
denote the standard Gaussian density and cumulative distribution functions,
respectively. Under these considerations, if $\sup_{x\in\mathbb{R}}\left|h\left(x\right)\right|\equiv B$,
then $\left|X_{t}\right|\le B+\alpha$ and, thus, ${\cal Z}$ is identified
as the set $\left[a,b\equiv-a\right]$, with $b\triangleq B+\alpha$.

\subsection{Markovian Filter}

In this case, the respective approximation of the state process is
given by the quantized stochastic difference equation
\begin{equation}
X_{t}^{L_{S}}\triangleq{\cal Q}_{L_{S}}\left(h\left(X_{t-1}^{L_{S}}\right)+W_{t}\right),\quad\forall t\in\mathbb{N},
\end{equation}
initialized as $X_{-1}^{L_{S}}\equiv{\cal Q}_{L_{S}}\left(X_{-1}\right)$,
with probability $1$. In order to guarantee asymptotic optimality
of the respective approximate filter described in Theorem \ref{Markovian_Filter},
the original process $X_{t}$ is required to at least satisfy the
basic Lipschitz condition of Assumption 3. Indeed, \textit{if we merely
assume that $h\left(\cdot\right)$ is additionally Lipschitz} with
constant $L_{h}>0$ (that is, regardless of the stochastic character
of $W_{t}$, in general), then the function
\begin{equation}
f\left(x,y\right)\triangleq h\left(x\right)+y,\quad\left(x,y\right)\in\left[-B,B\right]\times\left[-\alpha,\alpha\right]
\end{equation}
is also Lipschitz with respect to $x$ (for all $y$), with constant
$L_{h}$ as well. Therefore, under the mild Lipschitz assumption for
$h\left(\cdot\right)$, we have shown that the resulting Markovian
filter will indeed be asymptotically consistent. In practice, we expect
that a smaller constant $L_{h}$ would result in better performance
of the approximate filter, with best results if $h\left(\cdot\right)$
constitutes a contraction, which makes $f\left(\cdot,y\right)$ uniformly
contractive in $y$. The above is indeed true, since filtering is
essentially implemented via a stochastic difference equation itself,
and, in general, any discretized approximation to this difference
equation is subject to error accumulation.

Of course, in order for the Markovian filter to be realizable, both
the transition matrix $\boldsymbol{P}$ and the initial value $\boldsymbol{E}_{-1}$
have to be determined. In all cases, under our assumptions, $\boldsymbol{P}$
(and obviously $\boldsymbol{E}_{-1}$) may be determined during an
\textit{offline training phase}, and stored in memory. A brute force
way for estimating $\boldsymbol{P}$ is to simulate $X_{t}$ (recall
that the stochastic description of the transitions of $X_{t}$ is
known apriori). Then, $\boldsymbol{P}$ can be empirically estimated
using the Strong Law of Large Numbers (SLLN). The aforementioned procedure
results in excellent performance in practice \cite{KalPetChannelMarkov2014}.
Exactly the same idea may be employed in order to estimate $\boldsymbol{E}_{-1}$,
given the initial measure of $X_{t}$. Note that the above described
empirical method for the estimation of $\boldsymbol{P}$ and $\boldsymbol{E}_{-1}$
does not assume a specific model describing the temporal evolution
of $X_{t}$, or any particular choice of state quantization. Thus,
it is generally applicable.

However, for the specific (though general) class of systems discussed
above, we may also present an analytical construction for $\boldsymbol{P}$
(and $\boldsymbol{E}_{-1}$, assuming ${\cal P}_{X_{-1}}$ is known),
resulting in compact, closed form expressions. Indeed, by definition
of $X_{t}^{L_{S}}$, $\boldsymbol{P}\left(i,j\right)$ and each ${\cal Z}_{L_{S}}^{i}$,
whose center is $\boldsymbol{x}_{L_{S}}^{i}$, we get
\begin{flalign}
\boldsymbol{P}\left(i,j\right) & \equiv{\cal P}\left(\left.h\left(X_{t-1}^{L_{S}}\right)+W_{t}\in{\cal Z}_{L_{S}}^{i}\right|X_{t-1}^{L_{S}}\equiv\boldsymbol{x}_{L_{S}}^{j}\right)\nonumber \\
 & =\int_{{\cal Z}_{L_{S}}^{i}}\hspace{-2pt}f_{W}\hspace{-2pt}\left(x\hspace{-2pt}-\hspace{-2pt}h\left(\boldsymbol{x}_{L_{S}}^{j}\right)\hspace{-2pt}\right)\hspace{-2pt}\mathrm{d}x,
\end{flalign}
which, based on \eqref{eq:NAR}, can be written in closed form as
\begin{equation}
\boldsymbol{P}\hspace{-2pt}\left(i,j\right)\hspace{-2pt}\equiv\hspace{-2pt}\dfrac{\varPhi\hspace{-2pt}\left(\hspace{-2pt}\dfrac{p_{L_{S}}^{ij}\hspace{-2pt}\left(\alpha,B\right)}{\sigma}\hspace{-2pt}\right)\hspace{-2pt}\hspace{-2pt}-\hspace{-2pt}\varPhi\hspace{-2pt}\left(\hspace{-2pt}\dfrac{q_{L_{S}}^{ij}\hspace{-2pt}\left(\alpha,B\right)}{\sigma}\hspace{-2pt}\right)}{2\sigma\varPhi\left(\alpha/\sigma\right)\hspace{-2pt}-\hspace{-2pt}\sigma}\mathds{1}_{\left(-\infty,p\right)}\hspace{-2pt}\left(q\right),\hspace{-2pt}\hspace{-2pt}\label{eq:Markovian_CLOSED}
\end{equation}
for all $\left(i,j\right)\in\mathbb{N}_{L_{S}}^{+}\times\mathbb{N}_{L_{S}}^{+}$,
where
\begin{flalign}
p_{L_{S}}^{ij}\left(\alpha,B\right) & \triangleq\min\left\{ \alpha,\boldsymbol{x}_{L_{S}}^{i}\hspace{-2pt}-\hspace{-2pt}h\left(\boldsymbol{x}_{L_{S}}^{j}\right)\hspace{-2pt}+\hspace{-2pt}\dfrac{B+\alpha}{L_{S}}\right\} \;\text{and}\\
q_{L_{S}}^{ij}\left(\alpha,B\right) & \triangleq\max\left\{ -\alpha,\boldsymbol{x}_{L_{S}}^{i}\hspace{-2pt}-\hspace{-2pt}h\left(\boldsymbol{x}_{L_{S}}^{j}\right)\hspace{-2pt}-\hspace{-2pt}\dfrac{B+\alpha}{L_{S}}\right\} .
\end{flalign}
Consequently, via \eqref{eq:Markovian_CLOSED}, one may obtain the
whole matrix $\boldsymbol{P}$ for any set of parameters $\sigma,\alpha,B$
and for any resolution $L_{S}$. As far as the initial value $\boldsymbol{E}_{-1}$
is concerned, assuming that the initial measure of $X_{t}$, ${\cal P}_{X_{-1}}$,
is known and recalling that the mapping ${\cal Q}_{L_{S}}^{e}\left(\cdot\right)$
is bijective, it will be true that
\begin{align}
\boldsymbol{E}_{-1} & \equiv\sum_{j\in\mathbb{N}_{L_{S}}^{+}}{\bf e}_{j}^{L_{S}}{\cal P}\left(X_{-1}^{L_{S}}\equiv\boldsymbol{x}_{L_{S}}^{j}\right),\label{eq:E_-1}
\end{align}
where ${\cal P}\left(X_{-1}^{L_{S}}\equiv\boldsymbol{x}_{L_{S}}^{j}\right)\equiv\int_{{\cal Z}_{L_{S}}^{j}}{\cal P}_{X_{-1}}\hspace{-2pt}\left(\mathrm{d}x\right),$
for all $j\in\mathbb{N}_{L_{S}}^{+}$. Thus, $\boldsymbol{E}_{-1}$
can be evaluated in closed form, as long as the aforementioned integrals
can be analytically computed.

\textbf{\vspace{-20pt}
}

\subsection{Marginal Filter}

Marginal filters are, in general, slightly more complicated. However,
at least in theory, they are provably more powerful than Markovian
filters, as the following result suggests.
\begin{thm}
\label{NAR_CR}\textbf{\textup{(Additive NARs are }}\textbf{Almost}\textbf{\textup{
CRT II)}} Let $X_{t}\in\mathbb{R}$ evolve as in \eqref{eq:NAR},
with $X_{-1}\sim{\cal P}_{X_{-1}}$, and where
\begin{itemize}
\item $h\left(\cdot\right)$ is continuous and uniformly bounded by $B>0$.
\item $W_{t}$ follows the truncated Gaussian law in $\left[-\alpha,\alpha\right]$,
$\alpha>0$, with scale zero and location $\sigma>0$.
\end{itemize}
Then, for any quantizer ${\cal Q}_{L_{S}}\left(\cdot\right)$ and
any initial measure ${\cal P}_{X_{-1}}$, $X_{t}$ is \textbf{almost}
conditionally regular, in the sense that
\begin{equation}
\underset{y\in\mathbb{R}}{\mathrm{ess}\hspace{0.2em}\mathrm{sup}}\left|\kappa\hspace{-2pt}\left(\hspace{-1pt}\left.y\right|x\right)\hspace{-2pt}-\hspace{-2pt}\kappa_{t}\hspace{-2pt}\left(\hspace{-1pt}\left.y\right|\hspace{-2pt}\in\hspace{-2pt}{\cal Z}_{L_{S}}\hspace{-2pt}\left(x\right)\right)\right|\hspace{-2pt}\le\hspace{-2pt}\delta_{L_{S}}^{II}\left(x\right)\hspace{-2pt}+\hspace{-2pt}f_{W}\left(\alpha\right),
\end{equation}
for some uniformly bounded, time invariant, nonnegative sequence $\left\{ \delta_{n}^{II}\left(\cdot\right)\right\} _{n\in\mathbb{N}^{+}}$,
converging to zero ${\cal P}_{X_{t}}$-almost everywhere, for all
$t\in\left\{ -1\right\} \cup\mathbb{N}$.\end{thm}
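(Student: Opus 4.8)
The plan is to argue directly from the explicit transition kernel density of the additive NAR, $\kappa\left(\left.y\right|x\right)\equiv f_W\left(y-h\left(x\right)\right)$, and to compare it with its cumulative version $\kappa_t\left(\left.y\right|\in{\cal Z}_{L_S}\left(x\right)\right)\equiv\mathbb{E}\left\{\left.f_W\left(y-h\left(X_{t-1}\right)\right)\,\right|X_{t-1}\in{\cal Z}_{L_S}\left(x\right)\right\}$, for each fixed conditioning point $x$. First I would record that, since $\left|h\right|\le B$ and $\left|W_t\right|\le\alpha$, the state takes values in the compact interval ${\cal Z}\equiv\left[a,b\right]$ with $b-a=2\left(B+\alpha\right)$, so $h$ restricted to ${\cal Z}$ is uniformly continuous; let $\omega_h$ denote a modulus of continuity for it, so that $\omega_h\left(\delta\right)\downarrow0$ as $\delta\downarrow0$, with $\omega_h\left(b-a\right)\le2B$. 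Because every cell of $\Pi_{L_S}$ is an interval of length $\left(b-a\right)/L_S$ and $x\in{\cal Z}_{L_S}\left(x\right)$, any $\theta\in{\cal Z}_{L_S}\left(x\right)$ satisfies $\left|\theta-x\right|\le\left(b-a\right)/L_S$, whence $\left|h\left(\theta\right)-h\left(x\right)\right|\le\eta_{L_S}\triangleq\omega_h\left(\left(b-a\right)/L_S\right)$.

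The heart of the argument is a deterministic, $y$-uniform estimate of $\left|f_W\left(y-h\left(x\right)\right)-f_W\left(y-h\left(\theta\right)\right)\right|$ with $\theta$ ranging over the cell of $x$. Setting $z_1\triangleq y-h\left(x\right)$ and $z_2\triangleq y-h\left(\theta\right)$, we have $\left|z_1-z_2\right|\le\eta_{L_S}$, and $f_W$ is even, nonincreasing in $\left|z\right|$, identically $0$ off $\left[-\alpha,\alpha\right]$, and Lipschitz on $\left[-\alpha,\alpha\right]$ with a finite constant $L_{f_W}$ depending only on $\sigma,\alpha$ (since $\varphi'\left(u\right)=-u\varphi\left(u\right)$ is bounded). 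A split on the positions of $z_1,z_2$ relative to $\left[-\alpha,\alpha\right]$ then gives: if both lie outside, the difference vanishes; if both lie inside, it is at most $L_{f_W}\eta_{L_S}$ by the mean value theorem; and if exactly one, say $z_1$, lies inside, then $z_1$ is within $\eta_{L_S}$ of $\pm\alpha$, so $f_W\left(z_1\right)\le f_W\left(\alpha\right)+L_{f_W}\eta_{L_S}$ while $f_W\left(z_2\right)=0$. Consequently, in every case,
\begin{equation}
\left|f_W\left(y-h\left(x\right)\right)-f_W\left(y-h\left(\theta\right)\right)\right|\le f_W\left(\alpha\right)+L_{f_W}\,\eta_{L_S},\qquad\forall\,y\in\mathbb{R},\quad\forall\,\theta\in{\cal Z}_{L_S}\left(x\right).
\end{equation}

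To conclude, I would pass this bound through the conditioning: for each $y$, the triangle inequality for conditional expectations gives
\begin{equation}
\left|\kappa\left(\left.y\right|x\right)-\kappa_t\left(\left.y\right|\in{\cal Z}_{L_S}\left(x\right)\right)\right|\le\mathbb{E}\left\{\left.\left|f_W\left(y-h\left(x\right)\right)-f_W\left(y-h\left(X_{t-1}\right)\right)\right|\,\right|X_{t-1}\in{\cal Z}_{L_S}\left(x\right)\right\}\le f_W\left(\alpha\right)+L_{f_W}\,\eta_{L_S},
\end{equation}
since $X_{t-1}$ lies in the cell of $x$ on the conditioning event and the final bound is free of $y$; taking the essential supremum over $y$ preserves it. One then sets $\delta_{L_S}^{II}\left(x\right)\triangleq L_{f_W}\,\omega_h\left(\left(b-a\right)/L_S\right)$, which is constant in $x$, nonnegative, uniformly bounded (by, e.g., $2BL_{f_W}$), time invariant, and tends to $0$ as $L_S\to\infty$ by uniform continuity of $h$, hence a fortiori ${\cal P}_{X_t}$-almost everywhere; this yields exactly the claimed inequality for every $t$. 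The one genuine obstacle, and the reason the conclusion is only \emph{almost} CRT II, is the jump of $f_W$ at $\pm\alpha$: the mixed case of the split cannot be pushed below $f_W\left(\alpha\right)$, which is precisely the irreducible additive term appearing in the statement; a noise density vanishing continuously at $\pm\alpha$ would remove it and restore genuine CRT II regularity.
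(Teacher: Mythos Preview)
Your proof is correct and follows essentially the same route as the paper: both reduce the problem to bounding $\left|f_W\left(y-h\left(x\right)\right)-f_W\left(y-h\left(\theta\right)\right)\right|$ uniformly over $y$ and $\theta\in{\cal Z}_{L_S}\left(x\right)$, then split off the jump of $f_W$ at $\pm\alpha$ (yielding the irreducible $f_W\left(\alpha\right)$) from the Lipschitz part of $\varphi$ (yielding the vanishing term). The only cosmetic difference is that the paper uses the algebraic decomposition $\left|a_1\mathds{1}_1-a_2\mathds{1}_2\right|\le\min\left(a_1,a_2\right)\left|\mathds{1}_1-\mathds{1}_2\right|+\left|a_1-a_2\right|$ and defines $\delta_{L_S}^{II}\left(x\right)$ through $\sup_{\theta\in{\cal Z}_{L_S}\left(x\right)}\left|h\left(x\right)-h\left(\theta\right)\right|$, whereas you use a direct three-case analysis and the modulus of continuity $\omega_h$ to obtain an $x$-independent $\delta_{L_S}^{II}$; both choices satisfy the required uniform boundedness and almost-everywhere convergence.
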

\begin{proof}[Proof of Theorem \ref{NAR_CR}]
See Appendix E.
\end{proof}
As Theorem \ref{NAR_CR} suggests, regardless of the respective initial
measures and without any additional assumptions on the nature of $h\left(\cdot\right)$,
except for continuity, the truncated Gaussian NARs under consideration
are \textit{almost} conditionally regular of type II, in the sense
that the relevant condition on the respective stochastic kernel is
modified by adding \textit{the drift} $f_{W}\left(\alpha\right)$.
In general, this drift parameter might cause error accumulation during
the implementation of the marginal filter. On the other hand though,
it is true that for any fixed scale parameter $\sigma$, $f_{W}\left(\alpha\right)\equiv{\cal O}\hspace{-2pt}\left(\hspace{-2pt}\exp\left(-\alpha^{2}\right)\hspace{-2pt}\right)$.
Thus, for sufficiently large $\alpha$, $f_{W}\left(\alpha\right)$
will not essentially affect filter performance.

Nevertheless, technically, this drift error can vanish, if one considers
a white noise $W_{t}$ following a distribution admitting a finitely
supported and essentially Lipschitz in $\left[-\alpha,\alpha\right]$
density, \textit{taking zero values at} $\pm\alpha$. This is possible
by observing that the proof to Theorem \ref{NAR_CR} in fact works
for such densities, without significant modifications. Then, $f_{W}\left(\pm\alpha\right)\equiv0$
and, hence, the resulting NAR will be CRT II. Such densities exist
and are, in fact, popular; examples are the Logit-Normal and the Raised
Cosine densities, which constitute nice truncated approximations to
the Gaussian density, or more interesting choices, such as the Beta
and Kumarasawmy densities.

Regarding the implementation of the marginal filter, unlike the Markovian
case, closed forms for the elements of $\boldsymbol{P}_{\left(t\right)}$
are very difficult to obtain, because they explicitly depend on the
marginal measures of $X_{t}$, for each $t$, as \eqref{eq:CUM_KERNEL}
suggests. Even if ${\cal P}_{X_{-1}}$ is an invariant measure, implying
that the transition matrix is time invariant, the closed form determination
of $\boldsymbol{P}\left(i,j\right)$ requires proper choice of ${\cal P}_{X_{-1}}$,
which, in most cases, cannot be made by the user. Therefore, in most
cases, $\boldsymbol{P}_{\left(t\right)}$ has to be computed via,
for instance, simulation, and employing the SLLN. As restated above,
this simple technique gives excellent empirical results. Also, assuming
knowledge of the initial measure ${\cal P}_{X_{-1}}$, $\boldsymbol{E}_{-1}$
is again given by \eqref{eq:E_-1}.

In order to demonstrate the applicability of the marginal filter,
as well as empirically evaluate the training-by-simulation technique
advocated above, below we present some additional experimental results
(note that the following also holds for the Markovian filter, under
the appropriate assumptions). As we shall see, these results will
also confirm some aspects of the particular mode of convergence advocated
in Theorem \ref{CONVERGENCE_THEOREM}. Specifically, consider an additive
NAR of the form discussed above, where $h\left(x\right)\equiv\tanh\left(1.3x\right)\in\left(-1,1\right)$,
that is, $B\equiv1$, and where $\alpha\equiv1$ and $\sigma\equiv0.3$.
Additionally, the resulting state process $X_{t}$ is observed via
the nonlinear functional ${\bf y}_{t}\equiv\left[X_{t}\right]^{3}\boldsymbol{1}_{N}+\boldsymbol{w}_{t}$
($\boldsymbol{1}_{n}$ being the $n$-by-$1$ all-ones vector), where
$\boldsymbol{w}_{t}\overset{i.i.d}{\sim}{\cal N}\left(0,\sigma_{\boldsymbol{w}}^{2}{\bf I}_{N}\right)$,
$\sigma_{\boldsymbol{w}}^{2}\equiv2$, for all $t\in\mathbb{N}$.
In order to stress test the marginal approximation approach, we set
${\cal P}_{X_{-1}}\hspace{-2pt}\equiv{\cal U}\left[-2,2\right]$ and
we arbitrarily assume stationarity of $X_{t}$, regardless of ${\cal P}_{X_{-1}}$
being an invariant measure or not. This is a common tactic in practice.
Under this setting, $\boldsymbol{E}_{-1}\equiv L_{S}^{-1}{\bf 1}_{L_{S}}$,
whereas a \textit{single} $\boldsymbol{P}$ is estimated \textit{offline}
from $3\cdot10^{5}$ samples of a \textit{single} simulated version
of $X_{t}$.

As Theorem \ref{CONVERGENCE_THEOREM} suggests, one should be interested
in the approximation error between the approximate and exact filters
of $X_{t}$. However, the exact nonlinear filter of $X_{t}$ is impossible
to compute in a reasonable manner; besides, this is the motive for
developing approximate filters. For that reason, we will further approximate
the approximation error by replacing the optimal filter of $X_{t}$
by a\textit{ particle filter} (an also approximate \textit{global}
method), but employing a very high number of particles. The resampling
step of the particle filter is implemented using \textit{systematic
resampling}, known to minimize Monte Carlo (MC) variation \cite{PARTICLE2002tutorial}.
In our simulations, $5000$ particles are employed in each filtering
iteration.

In the above fashion, Fig. \ref{fig:ERRORS} shows, for each filtering
resolution $L_{S}$, ranging from $2$ to $50$, the \textit{worst}
absolute approximation error, chosen amongst $10$ realizations (MC
trials) of the (approximate) filtering process, where the filtering
horizon was chosen as $T\equiv150$ time steps. The error process
depicted in Fig. \ref{fig:ERRORS} provides a good approximation to
the exact uniform approximation error of \eqref{eq:UAE} in Theorem
\ref{CONVERGENCE_THEOREM}.
\begin{figure}
\centering\includegraphics[scale=0.61]{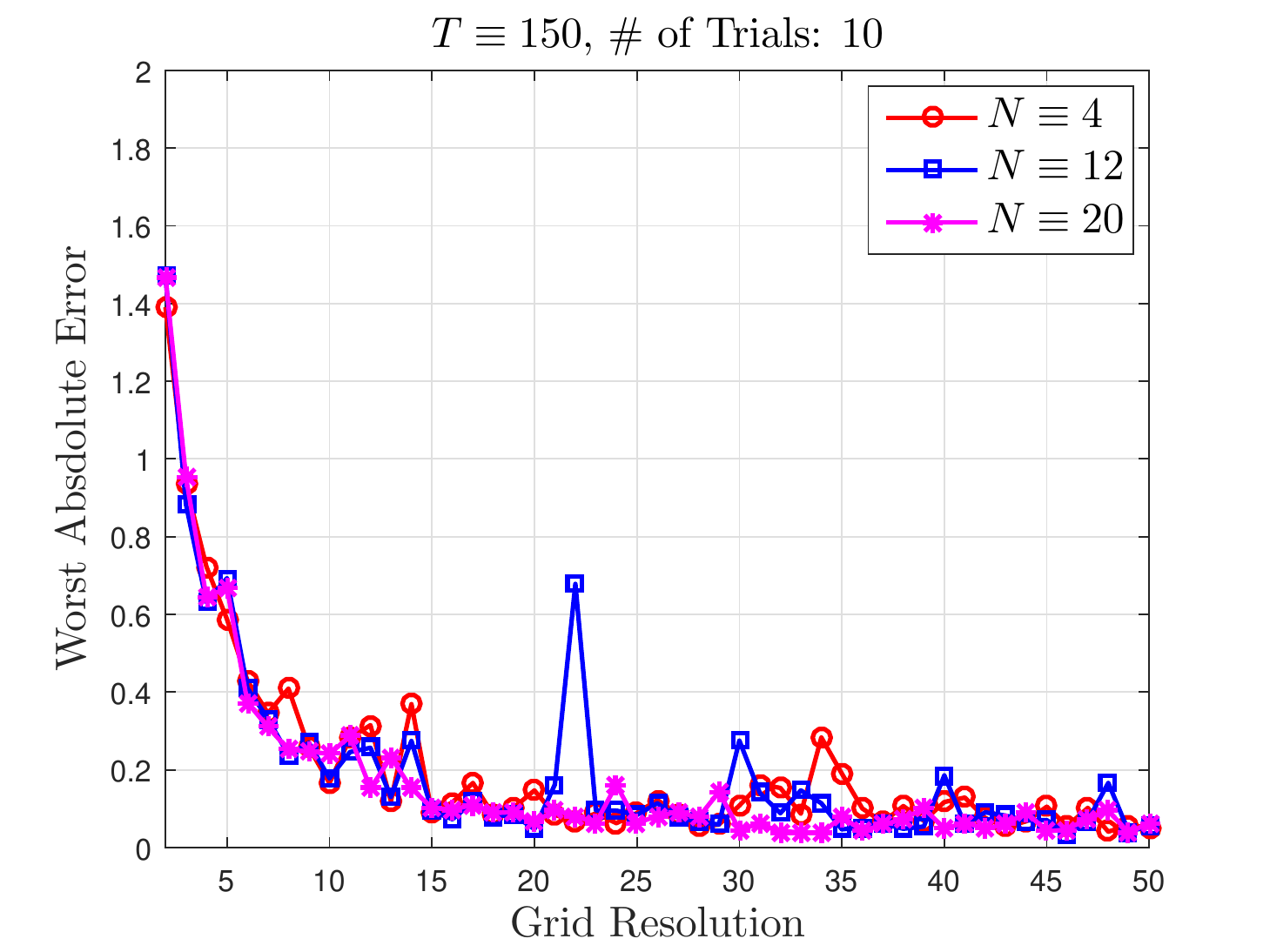}\caption{\label{fig:ERRORS}Marginal filter: worst error with respect to filter
resolution ($L_{S}$), over $10$ trials and for different values
of $N$.}
\vspace{-11pt}
\end{figure}

From the figure, we observe that convergence of the worst approximation
error is confirmed; for all values of $N$, a clear \textit{strictly
decreasing error trend} is identified, as $L_{S}$ increases. This
roughly justifies Theorem \ref{CONVERGENCE_THEOREM}. What is more,
at least for the $10$ realizations collected for each combination
of $N$ and $L_{S}$, the decay of the approximation error is \textit{superstable,
for all values of} $N$. This indicates that, in practice, the realizations
of the approximate filtering process, which will ever be observed
by the user, will be such that convergence to the optimal filter is
indeed uniform, and \textit{almost monotonic} (the ``outliers''
present at $L_{S}\equiv22,30,34$ are most probably due to the use
of a particle filter -a randomized estimator- for emulating the true
filter of $X_{t}$). In the language of Theorem \ref{CONVERGENCE_THEOREM},
it will ``always'' be the case that $\omega\in\widehat{\Omega}_{T}$
(an event occurring with high probability). This in turn implies that,
although general, Theorem \ref{CONVERGENCE_THEOREM} might be somewhat
looser than reality for ``good'' hidden model setups. Finally, another
practically significant detail, which is revealed via Fig. \ref{fig:ERRORS},
and seems to be a common feature of grid based methods, is that the
uniform error bound of the approximate filters \textit{does not increase
as a function of} $N$. Note that this fact cannot be verified via
Theorem \ref{CONVERGENCE_THEOREM}.

In addition to the above, the reader is referred to \cite{KalPetChannelMarkov2014,KalPet_SPAWC_2015},
where complementary simulation results are presented, in the context
of channel estimation in wireless sensor networks.

\section{Conclusion}

We have presented a comprehensive treatment of grid based approximate
nonlinear filtering of discrete time Markov processes observed in
conditionally Gaussian noise, relying on Markovian and marginal approximations
of the state. For the Markovian case, it has been shown that the resulting
approximate filter is strongly asymptotically optimal as long as the
transition mapping of the state is Lipschitz. For the marginal case,
the novel concept of conditional regularity was proposed as a sufficient
condition for ensuring asymptotic optimality. Conditional regularity
is proven to be potentially more relaxed, compared to the state of
the art in grid based filtering, revealing the potential strength
of the grid based approach, and also justifying its good performance
in applications. For both state approximation cases, convergence to
the optimal filter has been proven to be in a strong sense, i.e.,
compact in time and uniform in a fully characterized event occurring
almost certainly. Additionally, typical but important extensions of
our results were discussed and justified. The whole theoretical development
was based on a novel methodological scheme, especially for marginal
state approximations. This focused more on the use of linear-algebraic
techniques and less on measure theoretic arguments, making the presentation
more tangible and easier to grasp. In a companion paper \cite{KalPetChannelMarkov2014},
the results presented herein have been successfully exploited, providing
theoretical guarantees in the context of channel estimation in mobile
wireless sensor networks.

\section*{Appendix A: Proof of Lemma \ref{Markovian_Convergence}}

Consider the event ${\cal E}\triangleq\left\{ \omega\in\Omega\left|X_{t}\left(\omega\right)\in{\cal Z},\ \forall t\in\mathbb{N}\right.\right\} $
of unity probability measure, that is, with ${\cal P}\left({\cal E}\right)\equiv1$.
Of course, by our assumptions so far, ${\cal P}\left({\cal E}^{c}\right)\equiv0$,
with 
\begin{equation}
{\cal E}^{c}\triangleq\left\{ \omega\in\Omega\left|X_{t}\left(\omega\right)\notin{\cal Z},\ \text{for some }t\in\mathbb{N}\right.\right\} 
\end{equation}
being an ``impossible'' measurable set. Then, for $\omega\in{\cal E}$,
we have $X_{t}\left(\omega\right)\in{\cal Z}$ for all $t\in\mathbb{N}$
and we may rewrite \eqref{eq:Approx_Process} as
\begin{equation}
X_{t}^{L_{S}}\left(\omega\right)=f\left(X_{t-1}^{L_{S}}\left(\omega\right),W_{t}\left(\omega\right)\right)+\varepsilon_{t}^{L_{S}}\left(\omega\right),
\end{equation}
for some bounded process $\varepsilon_{t}^{L_{S}}\left(\omega\right)$.
By Assumption 3,
\begin{flalign}
\left\Vert X_{t}^{L_{S}}\left(\omega\right)-X_{t}\left(\omega\right)\right\Vert _{1}\le & K\hspace{-2pt}\left(W_{t}\left(\omega\right)\right)\hspace{-2pt}\left\Vert X_{t-1}^{L_{S}}\hspace{-2pt}\left(\omega\right)\hspace{-2pt}-\hspace{-2pt}X_{t-1}\hspace{-2pt}\left(\omega\right)\right\Vert _{1}\hspace{-2pt}+\hspace{-2pt}\left\Vert \varepsilon_{t}^{L_{S}}\left(\omega\right)\right\Vert _{1}\hspace{-2pt},\label{eq:Big_Inequality}
\end{flalign}
for all $t\in\mathbb{N}$. By construction of the quantizer ${\cal Q}_{L_{S}}\left(\cdot\right)$,
it is easy to show that, for all $\omega\in{\cal E}$, $\left\Vert \varepsilon_{t}^{L_{S}}\left(\omega\right)\right\Vert _{1}\le M\left|b-a\right|/2L_{S}$,
for all $t\in\mathbb{N}$. Then, iterating the right hand side of
\eqref{eq:Big_Inequality} and using induction, it can be easily shown
that
\begin{flalign}
 & \hspace{-2pt}\hspace{-2pt}\hspace{-2pt}\hspace{-2pt}\hspace{-2pt}\hspace{-2pt}\hspace{-2pt}\hspace{-2pt}\hspace{-2pt}\left\Vert X_{t}^{L_{S}}\left(\omega\right)-X_{t}\left(\omega\right)\right\Vert _{1}\nonumber \\
 & \le\left(\prod_{i=0}^{t}K\left(W_{i}\left(\omega\right)\right)\right)\left\Vert X_{-1}^{L_{S}}\left(\omega\right)-X_{-1}\left(\omega\right)\right\Vert _{1}+\dfrac{M\left|b-a\right|}{2L_{S}}\left(1+\sum_{j=1}^{t}\prod_{i=j}^{t}K\left(W_{i}\left(\omega\right)\right)\right),\label{eq:TWO_TERMS}
\end{flalign}
where $X_{-1}^{L_{S}}\left(\omega\right)$ and $X_{-1}\left(\omega\right)$
constitute the initial values of the processes $X_{t}^{L_{S}}\left(\omega\right)$
and $X_{t}\left(\omega\right)$, respectively. Let us focus on the
second term on the RHS of \eqref{eq:TWO_TERMS}. Since, by assumption,
the respective Lipschitz constants are bounded with respect to the
supremum norm in ${\cal E}$ and for all $t\in\mathbb{N}$, it holds
that
\begin{align}
\sum_{j=1}^{t}\prod_{i=j}^{t}K\left(W_{i}\left(\omega\right)\right) & \le\sup_{\omega\in{\cal E}}\sum_{j=1}^{t}\prod_{i=j}^{t}K\left(W_{i}\left(\omega\right)\right)\triangleq\sum_{j=1}^{t}\prod_{i=j}^{t}K\left(W_{i}^{*}\right).\label{eq:SUP_in_omega-1}
\end{align}
Note, however, that the supremum of \eqref{eq:SUP_in_omega-1} in
$t\in\mathbb{N}$ indeed might not be finite. Likewise, regarding
the first term on the RHS of \eqref{eq:TWO_TERMS}, we have
\begin{equation}
\prod_{i=0}^{t}K\hspace{-2pt}\left(W_{i}\left(\omega\right)\right)\le\sup_{\omega\in{\cal E}}\prod_{i=0}^{t}K\hspace{-2pt}\left(W_{i}\left(\omega\right)\right)\triangleq\prod_{i=0}^{t}K\hspace{-2pt}\left(W_{i}^{\star}\right)\hspace{-2pt}.
\end{equation}
As a result, assuming only Lipschitz continuity of $f\left(\cdot,\cdot\right)$
and recalling that $X_{-1}^{L_{S}}\equiv{\cal Q}_{L_{S}}\left(X_{-1}\right)$,
taking the supremum on both sides \eqref{eq:TWO_TERMS} yields
\begin{flalign}
\sup_{\omega\in{\cal E}}\left\Vert X_{t}^{L_{S}}\left(\omega\right)-\hspace{-2pt}X_{t}\left(\omega\right)\right\Vert _{1}\hspace{-2pt} & \equiv\underset{\omega\in\Omega}{\mathrm{ess}\hspace{0.2em}\mathrm{sup}}\left\Vert X_{t}^{L_{S}}\left(\omega\right)-\hspace{-2pt}X_{t}\left(\omega\right)\right\Vert _{1}\nonumber \\
 & \le\hspace{-2pt}\dfrac{M\left|b-a\right|}{2L_{S}}\hspace{-2pt}\left(\hspace{-2pt}1\hspace{-2pt}+\hspace{-2pt}\sum_{j=1}^{t}\prod_{i=j}^{t}\hspace{-2pt}K\hspace{-2pt}\left(W_{i}^{*}\right)\hspace{-2pt}+\hspace{-2pt}\prod_{i=0}^{t}\hspace{-2pt}K\hspace{-2pt}\left(W_{i}^{\star}\right)\hspace{-2pt}\right)\hspace{-2pt}\hspace{-2pt}\underset{L_{S}\rightarrow\infty}{\longrightarrow}\hspace{-2pt}\hspace{-2pt}0,\hspace{-2pt}
\end{flalign}
where the convergence rate may depend on each finite $t$, therefore
only guaranteeing convergence of $X_{t}^{L_{S}}\left(\omega\right)$
in the pointwise sense in $t$ and uniformly almost everywhere in
$\omega$. Now, if $f$$\left(\cdot,\cdot\right)$ is uniformly contractive
for all $\omega\in{\cal E}$, and for all $t\in\mathbb{N}$, then
it will be true that $K\left(W_{t}\left(\omega\right)\right)\in\left[0,1\right)$,
surely in ${\cal E}$ and everywhere in time as well. Consequently,
focusing on the second term on the RHS of \eqref{eq:TWO_TERMS}, it
should be true that 
\begin{flalign}
1+\sum_{j=1}^{t}\prod_{i=j}^{t}K\left(W_{i}\left(\omega\right)\right) & \le1+\sum_{j=1}^{t}\prod_{i=j}^{t}\hspace{0.2em}\sup_{l\in\mathbb{N}}\hspace{0.2em}\sup_{\omega\in{\cal E}}K\left(W_{l}\left(\omega\right)\right)\nonumber \\
 & \triangleq\hspace{-2pt}1+\sum_{j=1}^{t}\prod_{i=j}^{t}K_{*}\hspace{-2pt}\equiv\hspace{-2pt}\sum_{j=0}^{t}K_{*}^{j}\hspace{-2pt}=\hspace{-2pt}\dfrac{1-K_{*}^{t+1}}{1-K_{*}}\hspace{-2pt}\le\hspace{-2pt}\dfrac{1}{1-K_{*}},
\end{flalign}
where $K_{*}\in\left[0,1\right)$ constitutes a global ``Lipschitz
constant'' for $f\left(\cdot,\cdot\right)$ in ${\cal E}$ and for
all $t\in\mathbb{N}$. The situation is of course similar for the
simpler first term on the RHS of \eqref{eq:TWO_TERMS}. As a result,
we readily get that
\begin{align}
\sup_{t\in\mathbb{N}}\hspace{0.2em}\underset{\omega\in\Omega}{\mathrm{ess}\hspace{0.2em}\mathrm{sup}}\left\Vert X_{t}^{L_{S}}\hspace{-2pt}\left(\omega\right)\hspace{-2pt}-\hspace{-2pt}X_{t}\hspace{-2pt}\left(\omega\right)\right\Vert _{1}\hspace{-2pt} & \le\hspace{-2pt}\dfrac{M\left|b\hspace{-2pt}-\hspace{-2pt}a\right|}{2L_{S}}\dfrac{2\hspace{-2pt}-\hspace{-2pt}K_{*}}{1\hspace{-2pt}-\hspace{-2pt}K_{*}}\hspace{-2pt},
\end{align}
where the RHS vanishes as $L_{S}\rightarrow\infty$, thus proving
the second part of the lemma.\hfill{}\ensuremath{\blacksquare}

\section*{Appendix B: Proof of Lemma \ref{Doob_Lemma-1}}

Since the mapping ${\cal Q}_{L_{S}}^{e}\left(\cdot\right)$ is bijective
and using the Markov property of $X_{t}$, it is true that
\begin{flalign}
\mathbb{E}_{\widetilde{{\cal P}}}\left\{ \left.{\cal Q}_{L_{S}}^{e}\left(X_{t}^{L_{S}}\right)\right|\mathscr{X}_{t-1}\right\}  & \equiv\mathbb{E}_{\widetilde{{\cal P}}}\left\{ \left.{\cal Q}_{L_{S}}^{e}\left(X_{t}^{L_{S}}\right)\right|X_{t-1}\right\} \nonumber \\
 & \equiv\sum_{j\in\mathbb{N}_{L_{S}}^{+}}{\bf e}_{j}^{L_{S}}\widetilde{{\cal P}}\left(\left.X_{t}\in{\cal Z}_{L_{S}}^{j}\right|X_{t-1}\right).\label{eq:filters_proof_4}
\end{flalign}

First, let us consider the case where $X_{t}$ is CRT II. Then, assuming
the existence of a stochastic kernel density, it follows that there
is a nonnegative sequence $\left\{ \delta_{L_{S}}^{II}\left(\cdot\right)\right\} _{L_{S}\in\mathbb{N}^{+}}$
converging almost everywhere to $0$ as $L_{S}\rightarrow\infty$,
such that, for all $\boldsymbol{y}\in\mathbb{R}^{M\times1}$, $\kappa\left(\left.\boldsymbol{y}\right|\boldsymbol{x}\right)\le\delta_{L_{S}}^{II}\left(\boldsymbol{x}\right)+\kappa\left(\left.\boldsymbol{y}\right|\hspace{-2pt}\in\hspace{-2pt}{\cal Z}_{L_{S}}\left(\boldsymbol{x}\right)\right)$.
Thus, for each particular choice of $\left(\boldsymbol{y},\boldsymbol{x}\right)$,
there exists a process $\varepsilon_{L_{S}}\left(\boldsymbol{y},\boldsymbol{x}\right)\in\left[-\delta_{L_{S}}^{II}\left(\boldsymbol{x}\right),\delta_{L_{S}}^{II}\left(\boldsymbol{x}\right)\right]$,
such that
\begin{equation}
\kappa\left(\left.\boldsymbol{y}\right|\boldsymbol{x}\right)\equiv\varepsilon_{L_{S}}\left(\boldsymbol{y},\boldsymbol{x}\right)+\kappa\left(\left.\boldsymbol{y}\right|\hspace{-2pt}\in\hspace{-2pt}{\cal Z}_{L_{S}}\left(\boldsymbol{x}\right)\right).
\end{equation}
Consequently, \eqref{eq:filters_proof_4} can be expressed as
\begin{flalign}
\mathbb{E}_{\widetilde{{\cal P}}}\left\{ \hspace{-2pt}\left.{\cal Q}_{L_{S}}^{e}\left(X_{t}^{L_{S}}\right)\right|\mathscr{X}_{t-1}\right\}  & =\sum_{j\in\mathbb{N}_{L_{S}}^{+}}\hspace{-2pt}\hspace{-2pt}{\bf e}_{j}^{L_{S}}\hspace{-2pt}\int_{{\cal Z}_{L_{S}}^{j}}\hspace{-2pt}\hspace{-2pt}\kappa\left(\left.\boldsymbol{x}_{t}\right|X_{t-1}\left(\omega\right)\right)\mathrm{d}\boldsymbol{x}_{t}\nonumber \\
 & =\sum_{j\in\mathbb{N}_{L_{S}}^{+}}{\bf e}_{j}^{L_{S}}\int_{{\cal Z}_{L_{S}}^{j}}\kappa\left(\left.\boldsymbol{x}_{t}\right|\hspace{-2pt}\in\hspace{-2pt}{\cal Z}_{L_{S}}\left(X_{t-1}\left(\omega\right)\right)\right)\mathrm{d}\boldsymbol{x}_{t}+\boldsymbol{\varepsilon}_{t}^{L_{S}}\nonumber \\
 & \equiv\sum_{j\in\mathbb{N}_{L_{S}}^{+}}{\bf e}_{j}^{L_{S}}\widetilde{{\cal P}}\left(\left.X_{t}^{L_{S}}\equiv\boldsymbol{x}_{L_{S}}^{j}\right|X_{t-1}^{L_{S}}\right)+\boldsymbol{\varepsilon}_{t}^{L_{S}},\label{eq:filters_proof_5}
\end{flalign}
where the $\left\{ \mathscr{X}_{t}\right\} $-predictable error process
$\boldsymbol{\varepsilon}_{t}^{L_{S}}\in\mathbb{R}^{L_{S}\times1}$
is defined as\renewcommand{\arraystretch}{1.3}
\begin{equation}
\boldsymbol{\varepsilon}_{t}^{L_{S}}\triangleq\left[\left\{ {\displaystyle \int_{{\cal Z}_{L_{S}}^{j}}\varepsilon_{L_{S}}\left(\boldsymbol{x}_{t},X_{t-1}\right)\mathrm{d}\boldsymbol{x}_{t}}\right\} _{j\in\mathbb{N}_{L_{S}}^{+}}\right]^{\boldsymbol{T}}.
\end{equation}
\renewcommand{\arraystretch}{1}Then, since the state space of ${\cal Q}_{L_{S}}^{e}\left(X_{t}^{L_{S}}\right)$
is finite with cardinality $L_{S}$, we can write
\begin{equation}
\mathbb{E}_{\widetilde{{\cal P}}}\left\{ \left.{\cal Q}_{L_{S}}^{e}\left(X_{t}^{L_{S}}\right)\right|\mathscr{X}_{t-1}\right\} \equiv\boldsymbol{P}{\cal Q}_{L_{S}}^{e}\left(X_{t-1}^{L_{S}}\right)+\boldsymbol{\varepsilon}_{t}^{L_{S}},
\end{equation}
or, equivalently,
\begin{align}
\mathbb{E}_{\widetilde{{\cal P}}}\left\{ \left.{\cal Q}_{L_{S}}^{e}\left(X_{t}^{L_{S}}\right)-\boldsymbol{P}{\cal Q}_{L_{S}}^{e}\left(X_{t-1}^{L_{S}}\right)-\boldsymbol{\varepsilon}_{t}^{L_{S}}\right|\mathscr{X}_{t-1}\right\}  & \triangleq\mathbb{E}_{\widetilde{{\cal P}}}\left\{ \left.\boldsymbol{{\cal M}}_{t}^{e}\right|\mathscr{X}_{t-1}\right\} \equiv0.
\end{align}
As far as the quantity $\left\Vert \boldsymbol{\varepsilon}_{t}^{L_{S}}\right\Vert _{1}$
is concerned, it is true that
\begin{align}
\left\Vert \boldsymbol{\varepsilon}_{t}^{L_{S}}\right\Vert _{1} & \le\sum_{j\in\mathbb{N}_{L_{S}}^{+}}\int_{{\cal Z}_{L_{S}}^{j}}\left|{\displaystyle \varepsilon_{L_{S}}\left(\boldsymbol{x}_{t},X_{t-1}\right)}\right|\mathrm{d}\boldsymbol{x}_{t}\nonumber \\
 & \le\sum_{j\in\mathbb{N}_{L_{S}}^{+}}\int_{{\cal Z}_{L_{S}}^{j}}\delta_{L_{S}}^{II}\left(X_{t-1}\right)\mathrm{d}\boldsymbol{x}_{t}\nonumber \\
 & =\left|b-a\right|^{M}\delta_{L_{S}}^{II}\left(X_{t-1}\right)\underset{L_{S}\rightarrow\infty}{\longrightarrow}0,\quad\widetilde{{\cal P}}-a.s.
\end{align}
and for all $t\in\mathbb{N}$.

For the case where $X_{t}$ constitutes a CRT I process, the situation
is similar. Specifically, \eqref{eq:filters_proof_4} can be expressed
as
\begin{align}
\mathbb{E}_{\widetilde{{\cal P}}}\left\{ \left.{\cal Q}_{L_{S}}^{e}\left(X_{t}^{L_{S}}\right)\right|\mathscr{X}_{t-1}\right\}  & \equiv\sum_{j\in\mathbb{N}_{L_{S}}^{+}}{\bf e}_{j}^{L_{S}}\widetilde{{\cal P}}\left(\left.X_{t}^{L_{S}}\equiv\boldsymbol{x}_{L_{S}}^{j}\right|X_{t-1}^{L_{S}}\right)+\boldsymbol{\varepsilon}_{t}^{L_{S}},\label{eq:filters_proof_6}
\end{align}
where the process $\boldsymbol{\varepsilon}_{t}^{L_{S}}\in\mathbb{R}^{L_{S}\times1}$
is defined similarly to the previous case as\renewcommand{\arraystretch}{1.3}
\begin{equation}
\boldsymbol{\varepsilon}_{t}^{L_{S}}\triangleq\left[\varepsilon_{L_{S}}\left({\cal Z}_{L_{S}}^{1},X_{t-1}\right)\,\ldots\,{\displaystyle \varepsilon_{L_{S}}\left({\cal Z}_{L_{S}}^{L_{S}},X_{t-1}\right)}\right]^{\boldsymbol{T}},
\end{equation}
\renewcommand{\arraystretch}{1}with
\begin{align}
\left\Vert \boldsymbol{\varepsilon}_{t}^{L_{S}}\right\Vert _{1} & \equiv\sum_{j\in\mathbb{N}_{L_{S}}^{+}}\left|\varepsilon_{L_{S}}\left({\cal Z}_{L_{S}}^{j},X_{t-1}\right)\right|\nonumber \\
 & \le\sum_{j\in\mathbb{N}_{L_{S}}^{+}}\dfrac{\delta_{L_{S}}^{I}\left(X_{t-1}\right)}{L_{S}}\equiv\delta_{L_{S}}^{I}\left(X_{t-1}\right)\underset{L_{S}\rightarrow\infty}{\longrightarrow}0,
\end{align}
$\widetilde{{\cal P}}-a.s.$ and for all $t\in\mathbb{N}$. The proof
is complete.\hfill{}\ensuremath{\blacksquare}

\section*{Appendix C: Proof of Lemma \ref{Convergence_FAKE}}

Let us first recall some identifications. First, it can be easily
shown that
\begin{flalign}
\mathbb{E}_{\widetilde{{\cal P}}}\left\{ \left.\Lambda_{t}^{L_{S}}\right|\mathscr{Y}_{t}\right\}  & \equiv\left\Vert \mathbb{E}_{\widetilde{{\cal P}}}\left\{ \left.{\cal Q}_{L_{S}}^{e}\left(X_{t}^{L_{S}}\right)\Lambda_{t}^{L_{S}}\right|\mathscr{Y}_{t}\right\} \right\Vert _{1}\nonumber \\
 & \triangleq\left\Vert \boldsymbol{E}_{t}^{X}\right\Vert _{1},\quad\text{and}\\
\mathbb{E}_{\widetilde{{\cal P}}}\left\{ \left.\Lambda_{t}^{Z,L_{S}}\right|\mathscr{Y}_{t}\right\}  & \equiv\left\Vert \mathbb{E}_{\widetilde{{\cal P}}}\left\{ \left.Z_{t}^{L_{S}}\Lambda_{t}^{Z,L_{S}}\right|\mathscr{Y}_{t}\right\} \right\Vert _{1}\nonumber \\
 & \triangleq\left\Vert \boldsymbol{E}_{t}^{Z}\right\Vert _{1}.
\end{flalign}
Then, we can write\footnote{Here, $\left\Vert {\bf A}\right\Vert _{1}$ denotes the operator norm
induced by the $\ell_{1}$ vector norm.}
\begin{align}
\left\Vert {\cal E}^{L_{S}}\left(\left.X_{t}\right|\mathscr{Y}_{t}\right)-\widetilde{{\cal E}}^{L_{S}}\left(\left.X_{t}\right|\mathscr{Y}_{t}\right)\right\Vert _{1} & \le\left\Vert {\bf X}\right\Vert _{1}\dfrac{\left|\left\Vert \boldsymbol{E}_{t}^{Z}\right\Vert _{1}-\left\Vert \boldsymbol{E}_{t}^{X}\right\Vert _{1}\right|+\left\Vert \boldsymbol{E}_{t}^{X}-\boldsymbol{E}_{t}^{Z}\right\Vert _{1}}{\left\Vert \boldsymbol{E}_{t}^{X}\right\Vert _{1}}.
\end{align}
Since $\left\Vert {\bf X}\right\Vert _{1}\equiv M\max\left\{ \left|a\right|,\left|b\right|\right\} \triangleq M\gamma$
and using the reverse triangle inequality, we get
\begin{equation}
\left\Vert {\cal E}^{L_{S}}\left(\left.X_{t}\right|\mathscr{Y}_{t}\right)\hspace{-2pt}-\hspace{-2pt}\widetilde{{\cal E}}^{L_{S}}\left(\left.X_{t}\right|\mathscr{Y}_{t}\right)\right\Vert _{1}\hspace{-2pt}\le\hspace{-2pt}2M\gamma\dfrac{\left\Vert \boldsymbol{E}_{t}^{X}\hspace{-2pt}-\hspace{-2pt}\boldsymbol{E}_{t}^{Z}\right\Vert _{1}}{\left\Vert \boldsymbol{E}_{t}^{X}\right\Vert _{1}}.\label{eq:filters_proof_8}
\end{equation}

Since $Z_{t}^{L_{S}}$ is a Markov chain, it can be readily shown
that $\boldsymbol{E}_{t}^{Z}$ satisfies the linear recursion $\boldsymbol{E}_{t}^{Z}=\boldsymbol{\Lambda}_{t}\boldsymbol{P}\boldsymbol{E}_{t-1}^{Z}$,
for all $t\in\mathbb{N}$ (also see Theorem \ref{Markovian_Filter}).
Similarly, using the martingale difference type representation given
in Lemma \ref{Doob_Lemma-1}, it easy to show that $\boldsymbol{E}_{t}^{X}$
satisfies another recursion of the form
\begin{equation}
\boldsymbol{E}_{t}^{X}=\boldsymbol{\Lambda}_{t}\boldsymbol{P}\boldsymbol{E}_{t-1}^{X}+\boldsymbol{\Lambda}_{t}\mathbb{E}_{\widetilde{{\cal P}}}\left\{ \left.\boldsymbol{\varepsilon}_{t}^{L_{S}}\Lambda_{t-1}^{L_{S}}\right|\mathscr{Y}_{t-1}\right\} ,\:\forall t\in\mathbb{N}.
\end{equation}
Then, by induction, the error process $\boldsymbol{E}_{t}^{Z}-\boldsymbol{E}_{t}^{X}$
satisfies 
\begin{multline}
\boldsymbol{E}_{t}^{Z}-\boldsymbol{E}_{t}^{X}\\
=\left(\prod_{i\in\mathbb{N}_{t}}\left(\boldsymbol{\Lambda}_{t-i}\boldsymbol{P}\right)\right)\left(\boldsymbol{E}_{-1}^{Z}-\boldsymbol{E}_{-1}^{X}\right)-\sum_{j\in\mathbb{N}_{t}}\left(\prod_{i=0}^{j-1}\left(\boldsymbol{\Lambda}_{t-i}\boldsymbol{P}\right)\right)\boldsymbol{\Lambda}_{t-j}\mathbb{E}_{\widetilde{{\cal P}}}\left\{ \left.\boldsymbol{\varepsilon}_{t-j}^{L_{S}}\Lambda_{t-j-1}^{L_{S}}\right|\mathscr{Y}_{t-j-1}\right\} ,
\end{multline}
for all $t\in\mathbb{N}$. Setting
\begin{equation}
\boldsymbol{E}_{-1}^{Z}\equiv\mathbb{E}_{\widetilde{{\cal P}}}\left\{ Z_{-1}^{L_{S}}\right\} \equiv\mathbb{E}_{{\cal P}}\left\{ {\cal Q}_{L_{S}}^{e}\left(X_{-1}^{L_{S}}\right)\right\} \equiv\boldsymbol{E}_{-1}^{X}
\end{equation}
and taking the $\ell_{1}$-norm of $\boldsymbol{E}_{t}^{Z}-\boldsymbol{E}_{t}^{X}$,
it is true that
\begin{flalign}
\left\Vert \boldsymbol{E}_{t}^{Z}\hspace{-2pt}-\hspace{-2pt}\boldsymbol{E}_{t}^{X}\right\Vert _{1} & \le\hspace{-2pt}\sum_{\tau\in\mathbb{N}_{t}}\hspace{-2pt}\left(\prod_{i=0}^{t-\tau-1}\left\Vert \widehat{\boldsymbol{\Lambda}}_{t-i}\right\Vert _{1}\right)\hspace{-2pt}\left\Vert \widehat{\boldsymbol{\Lambda}}_{\tau}\right\Vert _{1}\mathbb{E}_{\widetilde{{\cal P}}}\hspace{-2pt}\left\{ \left.\widehat{\Lambda}_{\tau-1}^{L_{S}}\left\Vert \boldsymbol{\varepsilon}_{\tau}^{L_{S}}\right\Vert _{1}\right|\mathscr{Y}_{\tau-1}\right\} \nonumber \\
 & \le\hspace{-2pt}\sum_{\tau\in\mathbb{N}_{t}}\hspace{-2pt}\sqrt{\lambda_{inf}^{-N\left(t-\tau+1\right)}}\sqrt{\lambda_{inf}^{-N\tau}}\mathbb{E}_{\widetilde{{\cal P}}}\hspace{-2pt}\left\{ \left.\left\Vert \boldsymbol{\varepsilon}_{\tau}^{L_{S}}\right\Vert _{1}\right|\mathscr{Y}_{\tau-1}\right\} \nonumber \\
 & \equiv\hspace{-2pt}\sqrt{\lambda_{inf}^{-N\left(t+1\right)}}{\displaystyle \sum_{\tau\in\mathbb{N}_{t}}\hspace{-2pt}\mathbb{E}_{\widetilde{{\cal P}}}\hspace{-2pt}\left\{ \left\Vert \boldsymbol{\varepsilon}_{\tau}^{L_{S}}\right\Vert _{1}\right\} },
\end{flalign}
since the process $\boldsymbol{\varepsilon}_{t}^{L_{S}}$ is $\left\{ \mathscr{X}_{t}\right\} $-predictable
and, under $\widetilde{{\cal P}}$, the processes $X_{t}$ and ${\bf y}_{t}$
are statistically independent. 

Now, assuming, for example, that $X_{t}$ is CRT II (the case where
$X_{t}$ is CRT I is similar), we get
\begin{flalign}
\left\Vert \boldsymbol{E}_{t}^{Z}\hspace{-2pt}-\hspace{-2pt}\boldsymbol{E}_{t}^{X}\right\Vert _{1} & \hspace{-2pt}\le\hspace{-2pt}\sqrt{\lambda_{inf}^{-N\left(t+1\right)}}\left|b-a\right|^{M}\hspace{-2pt}\sum_{\tau\in\mathbb{N}_{t}}\hspace{-2pt}\mathbb{E}_{\widetilde{{\cal P}}}\hspace{-2pt}\left\{ {\displaystyle \delta_{L_{S}}^{II}}\hspace{-2pt}\left(X_{\tau-1}\right)\hspace{-2pt}\right\} \nonumber \\
 & \hspace{-2pt}\le\hspace{-2pt}{\displaystyle \dfrac{\left|b-a\right|^{M}}{N\log\left(\lambda_{inf}\right)}\sup_{\tau\in\mathbb{N}_{t}}\mathbb{E}_{\widetilde{{\cal P}}}\left\{ {\displaystyle \delta_{L_{S}}^{II}}\hspace{-2pt}\left(X_{\tau-1}\right)\right\} },
\end{flalign}
$\widetilde{{\cal P}}-a.s.$ and for all $t\in\mathbb{N}$. Regarding
the denominator on the RHS of \eqref{eq:filters_proof_8}, in (\cite{KalPetNonlinear_2015},
last part of the proof of Theorem 3 - Theorem \ref{CONVERGENCE_THEOREM}
in this paper), the authors have shown that, in general, for any fixed
$T<\infty$,
\begin{equation}
\inf_{t\in\mathbb{N}_{T}}\inf_{\omega\in\widehat{\Omega}_{T}}{\displaystyle \inf_{L_{S}\in\mathbb{N}}\mathbb{E}_{\widetilde{{\cal P}}}\left\{ \left.\Lambda_{t}^{L_{S}}\right|\mathscr{Y}_{t}\right\} \left(\omega\right)}>0,
\end{equation}
where $\widehat{\Omega}_{T}\hspace{-2pt}\subseteq\hspace{-2pt}\Omega$
constitutes exactly the same measurable set of Theorem \ref{CONVERGENCE_THEOREM},
occurring with ${\cal P}$-probability at least $1-$ $\left(T\hspace{-2pt}+\hspace{-2pt}1\right)^{1-CN}\hspace{-2pt}\exp\left(-CN\right)$,
for $C\ge1$. Thus, \eqref{eq:filters_proof_8} becomes
\begin{align}
\left\Vert {\cal E}^{L_{S}}\left(\left.X_{t}\right|\mathscr{Y}_{t}\right)-\widetilde{{\cal E}}^{L_{S}}\left(\left.X_{t}\right|\mathscr{Y}_{t}\right)\right\Vert _{1} & \le\dfrac{2M\gamma{\displaystyle \left|b-a\right|^{M}\sup_{\tau\in\mathbb{N}_{t}}\mathbb{E}_{\widetilde{{\cal P}}}\left\{ {\displaystyle \delta_{L_{S}}^{II}}\left(X_{\tau-1}\right)\right\} }}{N\log\left(\lambda_{inf}\right){\displaystyle \inf_{L_{S}\in\mathbb{N}}\left\Vert \boldsymbol{E}_{t}^{X}\right\Vert _{1}}}
\end{align}
and taking the supremum both with respect to $\omega\in\widehat{\Omega}_{T}$
and $t\in\mathbb{N}_{T}$ on both sides, we get
\begin{align}
{\displaystyle \sup_{t\in\mathbb{N}_{T}}}{\displaystyle \sup_{\omega\in\widehat{\Omega}_{T}}}\left\Vert {\cal E}^{L_{S}}\left(\left.X_{t}\right|\mathscr{Y}_{t}\right)-\widetilde{{\cal E}}^{L_{S}}\left(\left.X_{t}\right|\mathscr{Y}_{t}\right)\right\Vert _{1}\left(\omega\right) & \le\dfrac{{\displaystyle 2M\gamma\left|b-a\right|^{M}\sup_{\tau\in\mathbb{N}_{T}}\mathbb{E}_{\widetilde{{\cal P}}}\left\{ {\displaystyle \delta_{L_{S}}^{II}}\left(X_{\tau-1}\right)\right\} }}{N\log\left(\lambda_{inf}\right){\displaystyle \inf_{t\in\mathbb{N}_{T}}\inf_{\omega\in\widehat{\Omega}_{T}}\inf_{L_{S}\in\mathbb{N}}\left\Vert \boldsymbol{E}_{t}^{X}\right\Vert _{1}\left(\omega\right)}}.\label{eq:FINAL_INEQ}
\end{align}
Since the sequence $\left\{ \delta_{L_{S}}^{II}\left(\cdot\right)\right\} _{L_{S}}$
is ${\cal P}_{X_{-1}}\hspace{-2pt}\equiv\hspace{-2pt}{\cal P}_{X_{t}}\hspace{-2pt}-UI$,
it is trivial that the sequence $\left\{ \delta_{L_{S}}^{II}\left(X_{t-1}\left(\cdot\right)\right)\right\} _{L_{S}}$
is uniformly integrable, for all $t\in\mathbb{N}_{T}$. Then, because
$\delta_{L_{S}}^{II}\left(X_{t-1}\left(\cdot\right)\right)\stackrel[L_{S}\rightarrow\infty]{a.e.}{\longrightarrow}0$
(with respect to $\widetilde{{\cal P}}$), Vitali's Convergence Theorem
implies that $\mathbb{E}_{\widetilde{{\cal P}}}\left\{ {\displaystyle \delta_{L_{S}}^{II}}\left(X_{t-1}\right)\right\} \underset{L_{S}\rightarrow\infty}{\longrightarrow}0$,
for all $t\in\mathbb{N}_{T}$, which in turn implies that $\sup_{\tau\in\mathbb{N}_{T}}\mathbb{E}_{\widetilde{{\cal P}}}\left\{ {\displaystyle \delta_{L_{S}}^{II}}\left(X_{\tau-1}\right)\right\} \underset{L_{S}\rightarrow\infty}{\longrightarrow}0$.
Thus, the RHS of \eqref{eq:FINAL_INEQ} converges, and so does its
LHS as well.\hfill{}\ensuremath{\blacksquare}

\section*{Appendix D: Proof of Theorem \ref{Covariance}}

For simplicity and clarity in the exposition, we consider the standard
case where $\boldsymbol{\phi}_{t}\left(X_{t}\right)\equiv X_{t}$,
for all $t\in\mathbb{N}$ and $\rho\equiv1$. Starting with the definitions,
since $\mathbb{V}\hspace{-2pt}\left\{ \hspace{-2pt}\left.X_{t}\right|\mathscr{Y}_{t}\right\} $
is given by \eqref{eq:FIRST}, for all $t\in\mathbb{N}$, it is reasonable
to define the grid based ``filter''
\begin{flalign}
{\cal V}^{L_{S}}\left(\hspace{-2pt}\left.X_{t}\right|\mathscr{Y}_{t}\right) & \triangleq{\cal E}^{L_{S}}\hspace{-2pt}\left(\hspace{-2pt}\left.X_{t}X_{t}^{\boldsymbol{T}}\right|\mathscr{Y}_{t}\right)\hspace{-2pt}-{\cal E}^{L_{S}}\hspace{-2pt}\left(\hspace{-2pt}\left.X_{t}\right|\mathscr{Y}_{t}\right)\hspace{-2pt}\left({\cal E}^{L_{S}}\hspace{-2pt}\left(\hspace{-2pt}\left.X_{t}\right|\mathscr{Y}_{t}\right)\right)^{\boldsymbol{T}}\hspace{-2pt},\label{eq:SECOND}
\end{flalign}
for all $t\in\mathbb{N}$, where ${\cal E}^{L_{S}}\hspace{-2pt}\left(\hspace{-2pt}\left.X_{t}X_{t}^{\boldsymbol{T}}\right|\mathscr{Y}_{t}\right)$
constitutes an \textit{entrywise} operator on the matrix $X_{t}X_{t}^{\boldsymbol{T}}\in\mathbb{R}^{M\times M}$,
defined as 
\begin{flalign}
{\cal E}^{L_{S}}\hspace{-2pt}\left(\hspace{-2pt}\left.X_{t}X_{t}^{\boldsymbol{T}}\right|\mathscr{Y}_{t}\right)\left(i,j\right) & \triangleq\dfrac{1}{\left\Vert \boldsymbol{E}_{t}\right\Vert _{1}}\sum_{l\in\mathbb{N}_{L_{S}}^{+}}\boldsymbol{x}_{L_{S}}^{l}\left(i\right)\boldsymbol{x}_{L_{S}}^{l}\left(j\right)\boldsymbol{E}_{t}\left(l\right)\label{eq:IMPLEMENT}\\
 & \triangleq\dfrac{1}{\left\Vert \boldsymbol{E}_{t}\right\Vert _{1}}\sum_{l\in\mathbb{N}_{L_{S}}^{+}}\boldsymbol{\phi}^{ij}\left(\boldsymbol{x}_{L_{S}}^{l}\right)\boldsymbol{E}_{t}\left(l\right)\triangleq\boldsymbol{\Phi}^{ij}\dfrac{\boldsymbol{E}_{t}}{\left\Vert \boldsymbol{E}_{t}\right\Vert _{1}},
\end{flalign}
for all $\left(i,j\right)\in\mathbb{N}_{M}^{+}\times\mathbb{N}_{M}^{+}$.
In the above, the function(al) $\boldsymbol{\phi}^{ij}:\mathbb{R}^{L_{S}\times1}\rightarrow\mathbb{R}$
is obviously bounded as continuous. Then, making use of the triangle
inequality, the entrywise $\ell_{1}$-norm of ${\cal V}^{L_{S}}\left\{ \hspace{-2pt}\left.X_{t}\right|\mathscr{Y}_{t}\right\} -\mathbb{V}\left\{ \hspace{-2pt}\left.X_{t}\right|\mathscr{Y}_{t}\right\} $
may be bounded from above by the sum of the entrywise $\ell_{1}$-norms
of the differences between the first (Difference 1) and the second
(Difference 2) terms on the RHSs of \eqref{eq:FIRST} and \eqref{eq:SECOND},
respectively. For Difference 1,
\begin{flalign}
\left\Vert {\cal E}^{L_{S}}\hspace{-2pt}\left(\hspace{-2pt}\left.X_{t}X_{t}^{\boldsymbol{T}}\right|\mathscr{Y}_{t}\right)\hspace{-2pt}-\hspace{-2pt}\mathbb{E}\hspace{-2pt}\left\{ \hspace{-2pt}\left.X_{t}X_{t}^{\boldsymbol{T}}\right|\mathscr{Y}_{t}\right\} \right\Vert _{1}^{\mathsf{E}} & \le M^{2}\hspace{-2pt}\sup_{\left(i,j\right)\in\mathbb{N}_{M}^{+}\times\mathbb{N}_{M}^{+}}\left|\mathbb{E}\hspace{-2pt}\left\{ \hspace{-2pt}\left.\boldsymbol{\phi}^{ij}\left(X_{t}\right)\right|\mathscr{Y}_{t}\right\} \hspace{-2pt}-\hspace{-2pt}\boldsymbol{\Phi}^{ij}\dfrac{\boldsymbol{E}_{t}}{\left\Vert \boldsymbol{E}_{t}\right\Vert _{1}}\right|,
\end{flalign}
for all $t\in\mathbb{N}$, where we have exploited the definitions
above and which means that Difference 1 converges to zero as $L_{S}\rightarrow\infty$,
in the sense of Theorem \ref{Functionals}, for any fixed natural
$T<\infty$ and for the same measurable set $\widehat{\Omega}_{T}$
of Theorem \ref{Functionals} (also see Remark \ref{rem:UNIFORMITY}).
For Difference 2, it is easy to show that
\begin{multline}
\left\Vert {\cal E}^{L_{S}}\hspace{-2pt}\left(\hspace{-2pt}\left.X_{t}\right|\mathscr{Y}_{t}\right)\hspace{-2pt}\left({\cal E}^{L_{S}}\hspace{-2pt}\left(\hspace{-2pt}\left.X_{t}\right|\mathscr{Y}_{t}\right)\right)^{\boldsymbol{T}}\hspace{-2pt}\hspace{-2pt}\hspace{-2pt}-\hspace{-2pt}\mathbb{E}\hspace{-2pt}\left\{ \hspace{-2pt}\left.X_{t}\right|\mathscr{Y}_{t}\right\} \hspace{-2pt}\left(\mathbb{E}\hspace{-2pt}\left\{ \hspace{-2pt}\left.X_{t}\right|\mathscr{Y}_{t}\right\} \right)^{\boldsymbol{T}}\right\Vert _{1}^{\mathsf{E}}\\
\le\left(\left\Vert {\cal E}^{L_{S}}\hspace{-2pt}\left(\hspace{-2pt}\left.X_{t}\right|\mathscr{Y}_{t}\right)\right\Vert _{1}+\left\Vert \mathbb{E}\hspace{-2pt}\left\{ \hspace{-2pt}\left.X_{t}\right|\mathscr{Y}_{t}\right\} \right\Vert _{1}\right)\left\Vert {\cal E}^{L_{S}}\hspace{-2pt}\left(\hspace{-2pt}\left.X_{t}\right|\mathscr{Y}_{t}\right)\hspace{-2pt}-\hspace{-2pt}\mathbb{E}\hspace{-2pt}\left\{ \hspace{-2pt}\left.X_{t}\right|\mathscr{Y}_{t}\right\} \right\Vert _{1}\\
\le2M\gamma\left\Vert {\cal E}^{L_{S}}\hspace{-2pt}\left(\hspace{-2pt}\left.X_{t}\right|\mathscr{Y}_{t}\right)\hspace{-2pt}-\hspace{-2pt}\mathbb{E}\hspace{-2pt}\left\{ \hspace{-2pt}\left.X_{t}\right|\mathscr{Y}_{t}\right\} \right\Vert _{1},
\end{multline}
for all $t\in\mathbb{N}$, where we recall that $\gamma\equiv\max\left\{ \left|a\right|,\left|b\right|\right\} $.
Again, Difference 2 converges to zero as $L_{S}\rightarrow\infty$,
exactly in the same sense as Difference 1 above. Consequently, putting
it altogether, we have shown that
\begin{equation}
\sup_{t\in\mathbb{N}_{T}}\sup_{\omega\in\widehat{\Omega}_{T}}\left\Vert {\cal V}^{L_{S}}\left(\hspace{-2pt}\left.X_{t}\right|\mathscr{Y}_{t}\right)-\mathbb{V}\left\{ \hspace{-2pt}\left.X_{t}\right|\mathscr{Y}_{t}\right\} \right\Vert _{1}^{\mathsf{E}}\underset{L_{S}\rightarrow\infty}{\longrightarrow}0,
\end{equation}
proving asymptotic consistency of the approximate estimator.

Now, in order to show that ${\cal V}^{L_{S}}\left(\hspace{-2pt}\left.X_{t}\right|\mathscr{Y}_{t}\right)$
indeed has the form advocated in Theorem \ref{Covariance}, it suffices
to observe that \eqref{eq:IMPLEMENT} in fact coincides with the $\left(i,j\right)$-th
element of the matrix
\begin{equation}
{\bf X}\text{diag}\hspace{-2pt}\left(\dfrac{\boldsymbol{E}_{t}}{\left\Vert \boldsymbol{E}_{t}\right\Vert _{1}}\right)\hspace{-2pt}{\bf X}^{\boldsymbol{T}}\equiv{\cal E}^{L_{S}}\hspace{-2pt}\left(\hspace{-2pt}\left.X_{t}X_{t}^{\boldsymbol{T}}\right|\mathscr{Y}_{t}\right),
\end{equation}
for all $t\in\mathbb{N}$. The proof is now complete.\hfill{}\ensuremath{\blacksquare}

\section*{Appendix E: Proof of Theorem \ref{NAR_CR}}

\begin{figure*}[!b]
\normalsize
\vspace*{-10pt}
\hrulefill
\vspace*{-10pt}

\begin{flalign}
\hspace{-2pt}\hspace{-2pt}\hspace{-2pt}\hspace{-2pt}\hspace{-2pt}\hspace{-2pt}\hspace{-2pt}\hspace{-2pt}\hspace{-2pt}\underset{y\in\mathbb{R}}{\mathrm{ess}\hspace{0.2em}\mathrm{sup}}\hspace{-1pt}\left|\kappa\hspace{-2pt}\left(\hspace{-1pt}\left.y\right|x\right)\hspace{-2pt}-\hspace{-2pt}\kappa_{t}\hspace{-2pt}\left(\hspace{-1pt}\left.y\right|\hspace{-2pt}\in\hspace{-2pt}{\cal Z}_{L_{S}}\hspace{-2pt}\left(x\right)\right)\right| & \hspace{-2pt}\nonumber \\
 & \hspace{-2pt}\hspace{-2pt}\hspace{-2pt}\hspace{-2pt}\hspace{-2pt}\hspace{-2pt}\hspace{-2pt}\hspace{-2pt}\hspace{-2pt}\hspace{-2pt}\hspace{-2pt}\hspace{-2pt}\hspace{-2pt}\hspace{-2pt}\hspace{-2pt}\hspace{-2pt}\hspace{-2pt}\hspace{-2pt}\hspace{-2pt}\hspace{-2pt}\hspace{-2pt}\hspace{-2pt}\hspace{-2pt}\hspace{-2pt}\hspace{-2pt}\hspace{-2pt}\hspace{-2pt}\hspace{-2pt}\hspace{-2pt}\hspace{-2pt}\hspace{-2pt}\hspace{-2pt}\hspace{-2pt}\hspace{-2pt}\hspace{-2pt}\hspace{-2pt}\hspace{-2pt}\hspace{-2pt}\hspace{-2pt}\hspace{-2pt}\hspace{-2pt}\hspace{-2pt}\hspace{-2pt}\hspace{-2pt}\hspace{-2pt}\hspace{-2pt}\hspace{-2pt}\hspace{-2pt}\hspace{-2pt}\hspace{-2pt}\hspace{-2pt}\hspace{-2pt}\hspace{-2pt}\hspace{-2pt}\hspace{-2pt}\hspace{-2pt}\hspace{-2pt}\hspace{-2pt}\hspace{-2pt}\hspace{-2pt}\hspace{-2pt}\hspace{-2pt}\hspace{-2pt}\hspace{-2pt}\hspace{-2pt}\hspace{-2pt}\hspace{-2pt}\hspace{-2pt}\hspace{-2pt}\hspace{-2pt}\hspace{-2pt}\hspace{-2pt}\hspace{-2pt}\hspace{-2pt}\hspace{-2pt}\equiv\hspace{-2pt}\underset{y\in\mathbb{R}}{\mathrm{ess}\hspace{0.2em}\mathrm{sup}}\hspace{-1pt}\left|\kappa\hspace{-2pt}\left(\hspace{-1pt}\left.y\right|x\right)\hspace{-2pt}-\hspace{-2pt}\dfrac{{\displaystyle \int_{\hspace{-1pt}{\cal Z}_{L_{S}}\left(x\right)}\hspace{-2pt}\hspace{-2pt}\kappa\hspace{-2pt}\left(\hspace{-1pt}\left.y\right|\theta\right){\cal P}_{X_{t-1}}\hspace{-2pt}\left(\text{d}\theta\right)}}{{\cal P}\left(X_{t-1}\in{\cal Z}_{L_{S}}\left(x\right)\right)}\right|\hspace{-2pt}\equiv\hspace{-2pt}\underset{y\in\mathbb{R}}{\mathrm{ess}\hspace{0.2em}\mathrm{sup}}\hspace{-1pt}\left|\dfrac{{\displaystyle \int_{\hspace{-1pt}{\cal Z}_{L_{S}}\left(x\right)}\hspace{-2pt}\hspace{-2pt}\kappa\hspace{-2pt}\left(\hspace{-1pt}\left.y\right|x\right)\hspace{-2pt}-\hspace{-2pt}\kappa\hspace{-2pt}\left(\hspace{-1pt}\left.y\right|\theta\right){\cal P}_{X_{t-1}}\hspace{-2pt}\left(\text{d}\theta\right)}}{{\cal P}\left(X_{t-1}\in{\cal Z}_{L_{S}}\left(x\right)\right)}\right|\nonumber \\
 & \hspace{-2pt}\hspace{-2pt}\hspace{-2pt}\hspace{-2pt}\hspace{-2pt}\hspace{-2pt}\hspace{-2pt}\hspace{-2pt}\hspace{-2pt}\hspace{-2pt}\hspace{-2pt}\hspace{-2pt}\hspace{-2pt}\hspace{-2pt}\hspace{-2pt}\hspace{-2pt}\hspace{-2pt}\hspace{-2pt}\hspace{-2pt}\hspace{-2pt}\hspace{-2pt}\hspace{-2pt}\hspace{-2pt}\hspace{-2pt}\hspace{-2pt}\hspace{-2pt}\hspace{-2pt}\hspace{-2pt}\hspace{-2pt}\hspace{-2pt}\hspace{-2pt}\hspace{-2pt}\hspace{-2pt}\hspace{-2pt}\hspace{-2pt}\hspace{-2pt}\hspace{-2pt}\hspace{-2pt}\hspace{-2pt}\hspace{-2pt}\hspace{-2pt}\hspace{-2pt}\hspace{-2pt}\hspace{-2pt}\hspace{-2pt}\hspace{-2pt}\hspace{-2pt}\hspace{-2pt}\hspace{-2pt}\hspace{-2pt}\hspace{-2pt}\hspace{-2pt}\hspace{-2pt}\hspace{-2pt}\hspace{-2pt}\hspace{-2pt}\hspace{-2pt}\hspace{-2pt}\hspace{-2pt}\hspace{-2pt}\hspace{-2pt}\hspace{-2pt}\hspace{-2pt}\hspace{-2pt}\hspace{-2pt}\hspace{-2pt}\hspace{-2pt}\hspace{-2pt}\hspace{-2pt}\hspace{-2pt}\hspace{-2pt}\hspace{-2pt}\hspace{-2pt}\hspace{-2pt}\hspace{-2pt}\le\underset{y,\theta\in{\cal Z}_{L_{S}}\left(x\right)}{\mathrm{ess}\hspace{0.2em}\mathrm{sup}}\left|\kappa\left(\left.y\right|x\right)-\kappa\left(\left.y\right|\theta\right)\right|\equiv\underset{y,\theta\in{\cal Z}_{L_{S}}\left(x\right)}{\mathrm{ess}\hspace{0.2em}\mathrm{sup}}\left|f_{W}\left(y-h\left(x\right)\right)-f_{W}\left(y-h\left(\theta\right)\right)\right|\nonumber \\
 & \hspace{-2pt}\hspace{-2pt}\hspace{-2pt}\hspace{-2pt}\hspace{-2pt}\hspace{-2pt}\hspace{-2pt}\hspace{-2pt}\hspace{-2pt}\hspace{-2pt}\hspace{-2pt}\hspace{-2pt}\hspace{-2pt}\hspace{-2pt}\hspace{-2pt}\hspace{-2pt}\hspace{-2pt}\hspace{-2pt}\hspace{-2pt}\hspace{-2pt}\hspace{-2pt}\hspace{-2pt}\hspace{-2pt}\hspace{-2pt}\hspace{-2pt}\hspace{-2pt}\hspace{-2pt}\hspace{-2pt}\hspace{-2pt}\hspace{-2pt}\hspace{-2pt}\hspace{-2pt}\hspace{-2pt}\hspace{-2pt}\hspace{-2pt}\hspace{-2pt}\hspace{-2pt}\hspace{-2pt}\hspace{-2pt}\hspace{-2pt}\hspace{-2pt}\hspace{-2pt}\hspace{-2pt}\hspace{-2pt}\hspace{-2pt}\hspace{-2pt}\hspace{-2pt}\hspace{-2pt}\hspace{-2pt}\hspace{-2pt}\hspace{-2pt}\hspace{-2pt}\hspace{-2pt}\hspace{-2pt}\hspace{-2pt}\hspace{-2pt}\hspace{-2pt}\hspace{-2pt}\hspace{-2pt}\hspace{-2pt}\hspace{-2pt}\hspace{-2pt}\hspace{-2pt}\hspace{-2pt}\hspace{-2pt}\hspace{-2pt}\hspace{-2pt}\hspace{-2pt}\hspace{-2pt}\hspace{-2pt}\hspace{-2pt}\hspace{-2pt}\hspace{-2pt}\hspace{-2pt}\hspace{-2pt}\le\underset{y,\theta\in{\cal Z}_{L_{S}}\left(x\right)}{\mathrm{ess}\hspace{0.2em}\mathrm{sup}}\dfrac{\left|\varphi\left(\hspace{-2pt}\dfrac{y-h\left(x\right)}{\sigma}\hspace{-2pt}\right)\mathds{1}_{\left[-\alpha,\alpha\right]}\left(y-h\left(x\right)\right)-\varphi\left(\hspace{-2pt}\dfrac{y-h\left(\theta\right)}{\sigma}\hspace{-2pt}\right)\mathds{1}_{\left[-\alpha,\alpha\right]}\left(y-h\left(\theta\right)\right)\right|}{2\sigma\varPhi\left(\alpha/\sigma\right)-\sigma},\nonumber \\
 & \hspace{-2pt}\hspace{-2pt}\hspace{-2pt}\hspace{-2pt}\hspace{-2pt}\hspace{-2pt}\hspace{-2pt}\hspace{-2pt}\hspace{-2pt}\hspace{-2pt}\hspace{-2pt}\hspace{-2pt}\hspace{-2pt}\hspace{-2pt}\hspace{-2pt}\hspace{-2pt}\hspace{-2pt}\hspace{-2pt}\hspace{-2pt}\hspace{-2pt}\hspace{-2pt}\hspace{-2pt}\hspace{-2pt}\hspace{-2pt}\hspace{-2pt}\hspace{-2pt}\hspace{-2pt}\hspace{-2pt}\hspace{-2pt}\hspace{-2pt}\hspace{-2pt}\hspace{-2pt}\hspace{-2pt}\hspace{-2pt}\hspace{-2pt}\hspace{-2pt}\hspace{-2pt}\hspace{-2pt}\hspace{-2pt}\hspace{-2pt}\hspace{-2pt}\hspace{-2pt}\hspace{-2pt}\hspace{-2pt}\hspace{-2pt}\hspace{-2pt}\hspace{-2pt}\hspace{-2pt}\hspace{-2pt}\hspace{-2pt}\hspace{-2pt}\hspace{-2pt}\hspace{-2pt}\hspace{-2pt}\hspace{-2pt}\hspace{-2pt}\hspace{-2pt}\hspace{-2pt}\hspace{-2pt}\hspace{-2pt}\hspace{-2pt}\hspace{-2pt}\hspace{-2pt}\hspace{-2pt}\hspace{-2pt}\hspace{-2pt}\hspace{-2pt}\hspace{-2pt}\hspace{-2pt}\hspace{-2pt}\hspace{-2pt}\hspace{-2pt}\hspace{-2pt}\hspace{-2pt}\hspace{-2pt}\le\hspace{-2pt}\underset{y,\theta\in{\cal Z}_{L_{S}}\left(x\right)}{\mathrm{ess}\hspace{0.2em}\mathrm{sup}}\hspace{-2pt}\left[\vphantom{\dfrac{\min\left\{ \hspace{-2pt}\varphi\hspace{-2pt}\left(\hspace{-2pt}\dfrac{y\hspace{-2pt}-\hspace{-2pt}h\left(x\right)}{\sigma}\hspace{-2pt}\right)\hspace{-2pt},\varphi\hspace{-2pt}\left(\hspace{-2pt}\dfrac{y\hspace{-2pt}-\hspace{-2pt}h\left(\theta\right)}{\sigma}\hspace{-2pt}\right)\hspace{-2pt}\right\} }{2\sigma\varPhi\left(\alpha/\sigma\right)-\sigma}}\right.\dfrac{\min\left\{ \hspace{-2pt}\varphi\hspace{-2pt}\left(\hspace{-2pt}\dfrac{y\hspace{-2pt}-\hspace{-2pt}h\left(x\right)}{\sigma}\hspace{-2pt}\right)\hspace{-2pt},\varphi\hspace{-2pt}\left(\hspace{-2pt}\dfrac{y\hspace{-2pt}-\hspace{-2pt}h\left(\theta\right)}{\sigma}\hspace{-2pt}\right)\hspace{-2pt}\right\} \hspace{-2pt}\left|\mathds{1}_{\left[-\alpha,\alpha\right]}\hspace{-2pt}\left(y\hspace{-2pt}-\hspace{-2pt}h\left(x\right)\right)\hspace{-2pt}-\hspace{-2pt}\mathds{1}_{\left[-\alpha,\alpha\right]}\hspace{-2pt}\left(y\hspace{-2pt}-\hspace{-2pt}h\left(\theta\right)\right)\right|}{2\sigma\varPhi\left(\alpha/\sigma\right)-\sigma}\nonumber \\
 & \quad\quad\quad\quad\quad\quad\quad\quad\quad\quad\quad+\dfrac{\left|\varphi\hspace{-2pt}\left(\hspace{-2pt}\dfrac{y\hspace{-2pt}-\hspace{-2pt}h\left(x\right)}{\sigma}\hspace{-2pt}\right)\hspace{-2pt}\hspace{-2pt}-\hspace{-2pt}\varphi\hspace{-2pt}\left(\hspace{-2pt}\dfrac{y\hspace{-2pt}-\hspace{-2pt}h\left(\theta\right)}{\sigma}\hspace{-2pt}\right)\right|}{2\sigma\varPhi\left(\alpha/\sigma\right)-\sigma}\left.\vphantom{\dfrac{\min\left\{ \hspace{-2pt}\varphi\hspace{-2pt}\left(\hspace{-2pt}\dfrac{y\hspace{-2pt}-\hspace{-2pt}h\left(x\right)}{\sigma}\hspace{-2pt}\right)\hspace{-2pt},\varphi\hspace{-2pt}\left(\hspace{-2pt}\dfrac{y\hspace{-2pt}-\hspace{-2pt}h\left(\theta\right)}{\sigma}\hspace{-2pt}\right)\hspace{-2pt}\right\} }{2\sigma\varPhi\left(\alpha/\sigma\right)-\sigma}}\right]\label{eq:LOOONG-1}
\end{flalign}
\vspace*{-20pt}
\end{figure*}

By Definition \ref{Cond_Reg} and the additive model under consideration,
it is obvious that we are interested in CRT II, which, for the case
of an arbitrary initial measure ${\cal P}_{X_{-1}}$, is equivalent
to the strengthened \textit{global} demand that 
\begin{equation}
\underset{\boldsymbol{y}\in\mathbb{R}^{M\times1}}{\mathrm{ess}\hspace{0.2em}\mathrm{sup}}\left|\kappa\left(\left.\boldsymbol{y}\right|\boldsymbol{x}\right)-\kappa_{t}\left(\left.\boldsymbol{y}\right|\hspace{-2pt}\in\hspace{-2pt}{\cal Z}_{L_{S}}\left(\boldsymbol{x}\right)\right)\right|\le\delta_{L_{S},t}^{II}\left(\boldsymbol{x}\right),
\end{equation}
being true ${\cal P}_{X_{t}}\hspace{-2pt}\hspace{-2pt}-\hspace{-1pt}a.e.$,
for some ${\cal P}_{X_{t}}\hspace{-2pt}\hspace{-2pt}-UI$, nonnegative
sequence $\left\{ \delta_{n,t}^{II}\left(\cdot\right)\right\} _{n\in\mathbb{N}^{+}}$,
with $\delta_{n,t}^{II}\left(\cdot\right)\underset{n\rightarrow\infty}{\longrightarrow}0$,
${\cal P}_{X_{t}}\hspace{-2pt}\hspace{-2pt}-\hspace{-2pt}a.e$, \textbf{\textit{for
all}} $t\in\left\{ -1\right\} \cup\mathbb{N}_{T}$, for some desired
$T\in\left[0,\infty\right]$. Of course, $\kappa_{t}\left(\left.\cdot\right|\hspace{-2pt}\in\hspace{-2pt}{\cal Z}_{L_{S}}\left(\cdot\right)\right)$
is defined exactly as in \eqref{eq:CUM_DENSITY}, but with an explicit
subscript ``$t$'', indicating possible temporal variability.

Then, in regard to the additive NAR under consideration and using
the respective definitions, it is true that (see \eqref{eq:LOOONG-1})
\begin{flalign}
\underset{y\in\mathbb{R}}{\mathrm{ess}\hspace{0.2em}\mathrm{sup}}\hspace{-1pt}\left|\kappa\hspace{-2pt}\left(\hspace{-1pt}\left.y\right|x\right)\hspace{-2pt}-\hspace{-2pt}\kappa_{t}\hspace{-2pt}\left(\hspace{-1pt}\left.y\right|\hspace{-2pt}\in\hspace{-2pt}{\cal Z}_{L_{S}}\hspace{-2pt}\left(x\right)\right)\right| & \le\underset{y\in\mathbb{R},\theta\in{\cal Z}_{L_{S}}\left(x\right)}{\mathrm{ess}\hspace{0.2em}\mathrm{sup}}\dfrac{\varphi\left(\dfrac{\alpha}{\sigma}\right)+\left|\varphi\hspace{-2pt}\left(\hspace{-2pt}\dfrac{y\hspace{-2pt}-\hspace{-2pt}h\left(x\right)}{\sigma}\hspace{-2pt}\right)\hspace{-2pt}\hspace{-2pt}-\hspace{-2pt}\varphi\hspace{-2pt}\left(\hspace{-2pt}\dfrac{y\hspace{-2pt}-\hspace{-2pt}h\left(\theta\right)}{\sigma}\hspace{-2pt}\right)\right|}{2\sigma\varPhi\left(\alpha/\sigma\right)-\sigma}\nonumber \\
 & \equiv f_{W}\left(\alpha\right)+\underset{y\in\mathbb{R},\theta\in{\cal Z}_{L_{S}}\left(x\right)}{\mathrm{ess}\hspace{0.2em}\mathrm{sup}}\dfrac{\left|\varphi\hspace{-2pt}\left(\hspace{-2pt}\dfrac{y\hspace{-2pt}-\hspace{-2pt}h\left(x\right)}{\sigma}\hspace{-2pt}\right)\hspace{-2pt}\hspace{-2pt}-\hspace{-2pt}\varphi\hspace{-2pt}\left(\hspace{-2pt}\dfrac{y\hspace{-2pt}-\hspace{-2pt}h\left(\theta\right)}{\sigma}\hspace{-2pt}\right)\right|}{2\sigma\varPhi\left(\alpha/\sigma\right)-\sigma}\nonumber \\
 & \le f_{W}\left(\alpha\right)+\dfrac{{\displaystyle \sup_{\theta\in{\cal Z}_{L_{S}}\left(x\right)}\left|h\left(x\right)-h\left(\theta\right)\right|}}{\left(2\sigma^{2}\varPhi\left(\alpha/\sigma\right)-\sigma^{2}\right)\sqrt{2e\pi}},\:{\cal P}_{X_{t}}\hspace{-2pt}\hspace{-2pt}-\hspace{-1pt}a.e.,\label{eq:PRE_Final}
\end{flalign}
for all $t\in\left\{ -1\right\} \cup\mathbb{N}_{T}$. From \eqref{eq:PRE_Final},
it is almost obvious that $\sup_{\theta\in{\cal Z}_{L_{S}}\left(x\right)}\left|h\left(x\right)-h\left(\theta\right)\right|$
vanishes as $L_{S}\rightarrow\infty$. Indeed, for each fixed $x$,
by definition of ${\cal Z}_{L_{S}}\left(x\right)$, it follows that
\begin{flalign}
{\displaystyle \sup_{\theta\in{\cal Z}_{L_{S}}\left(x\right)}\hspace{-2pt}\left|h\left(x\right)\hspace{-2pt}-\hspace{-2pt}h\left(\theta\right)\right|} & \equiv{\displaystyle \sup_{\left|\theta-{\cal Q}_{L_{S}}\left(x\right)\right|\le\frac{B+\alpha}{L_{S}}}\left|h\left(x\right)\hspace{-2pt}-\hspace{-2pt}h\left(\theta\right)\right|}\nonumber \\
 & \equiv\left|h\left(x\right)\hspace{-2pt}-\hspace{-2pt}h\left(\theta_{L_{S}}^{*}\left(x\right)\right)\right|,
\end{flalign}
where $\theta_{L_{S}}^{*}\left(x\right)\underset{L_{S}\rightarrow\infty}{\longrightarrow}x$,
${\cal P}_{X_{t}}\hspace{-2pt}\hspace{-2pt}-\hspace{-1pt}a.e.$. Thus,
due to the continuity of $h\left(\cdot\right)$, $\sup_{\theta\in{\cal Z}_{L_{S}}\left(x\right)}\left|h\left(x\right)-h\left(\theta\right)\right|\underset{L_{S}\rightarrow\infty}{\longrightarrow}0$,
${\cal P}_{X_{t}}\hspace{-2pt}\hspace{-2pt}-\hspace{-1pt}a.e.$, for
all $t\in\left\{ -1\right\} \cup\mathbb{N}_{T}$. Now, note that $\sup_{\theta\in{\cal Z}_{L_{S}}\left(x\right)}\left|h\left(x\right)-h\left(\theta\right)\right|\le2B$,
set
\begin{equation}
\delta_{L_{S}}^{II}\left(x\right)\triangleq\dfrac{{\displaystyle \sup_{\theta\in{\cal Z}_{L_{S}}\left(x\right)}\left|h\left(x\right)-h\left(\theta\right)\right|}}{\left(2\sigma^{2}\varPhi\left(\alpha/\sigma\right)-\sigma^{2}\right)\sqrt{2e\pi}}
\end{equation}
and choose $T\equiv\infty$. The proof is complete.\hfill{}\ensuremath{\blacksquare}

\section*{Appendix F: Proof of Lemma \ref{P_Equivalence}}

This is a technical proof and requires a deeper appeal to the theoretics
of change of probability measures. Until now, we have made use of
the so called \textbf{\textit{reverse}} \cite{Elliott1994Hidden}
change of measure formula
\begin{equation}
\mathbb{E}_{{\cal P}}\left\{ \left.X_{t}\right|\mathscr{Y}_{t}\right\} \equiv\dfrac{\mathbb{E}_{\widetilde{{\cal P}}}\left\{ \left.X_{t}\Lambda_{t}\right|\mathscr{Y}_{t}\right\} }{\mathbb{E}_{\widetilde{{\cal P}}}\left\{ \left.\Lambda_{t}\right|\mathscr{Y}_{t}\right\} },\quad\forall t\in\mathbb{N}.\label{eq:CoM_basic}
\end{equation}
Formula \eqref{eq:CoM_basic} is characterized as reverse, simply
because it provides a representation for the conditional expectation
of $X_{t}$ under the original base measure ${\cal P}$ via operations
performed exclusively under another auxiliary, hypothetical base measure
$\widetilde{{\cal P}}$. In full generality, the likelihood ratio
process $\Lambda_{t}$ on the RHS of \eqref{eq:CoM_basic} may be
expressed as \cite{KalPetNonlinear_2015} 
\begin{flalign}
\Lambda_{t} & \equiv\dfrac{{\displaystyle \prod_{i\in\mathbb{N}_{t}}}\exp\left(\dfrac{1}{2}\left\Vert {\bf y}_{i}\right\Vert _{2}^{2}-\dfrac{1}{2}\left({\bf y}_{i}-\boldsymbol{\mu}_{i}\left(X_{i}\right)\right)^{\boldsymbol{T}}\left(\boldsymbol{\Sigma}_{i}\left(X_{i}\right)+\sigma_{\xi}^{2}{\bf I}_{N\times N}\right)^{-1}\left({\bf y}_{i}-\boldsymbol{\mu}_{i}\left(X_{i}\right)\right)\right)}{{\displaystyle \prod_{i\in\mathbb{N}_{t}}}\sqrt{\det\left(\boldsymbol{\Sigma}_{i}\left(X_{i}\right)+\sigma_{\xi}^{2}{\bf I}_{N\times N}\right)}}\nonumber \\
 & \equiv{\displaystyle \prod_{i\in\mathbb{N}_{t}}}\dfrac{\sqrt{\left(2\pi\right)^{N}}}{\exp\left(-\dfrac{1}{2}\left\Vert {\bf y}_{i}\right\Vert _{2}^{2}\right)}\dfrac{\exp\left(-\dfrac{1}{2}\left({\bf y}_{i}-\boldsymbol{\mu}_{i}\left(X_{i}\right)\right)^{\boldsymbol{T}}\left(\boldsymbol{\Sigma}_{i}\left(X_{i}\right)+\sigma_{\xi}^{2}{\bf I}_{N\times N}\right)^{-1}\left({\bf y}_{i}-\boldsymbol{\mu}_{i}\left(X_{i}\right)\right)\right)}{\sqrt{\left(2\pi\right)^{N}}\sqrt{\det\left(\boldsymbol{\Sigma}_{i}\left(X_{i}\right)+\sigma_{\xi}^{2}{\bf I}_{N\times N}\right)}}\nonumber \\
 & \equiv{\displaystyle \prod_{i\in\mathbb{N}_{t}}\dfrac{{\cal N}\left({\bf y}_{i};\boldsymbol{\mu}_{i}\left(X_{i}\right),{\bf C}_{i}\left(X_{i}\right)\right)}{{\cal N}\left({\bf y}_{i};{\bf 0},{\bf I}\right)}}\nonumber \\
 & \triangleq{\displaystyle \prod_{i\in\mathbb{N}_{t}}}\mathsf{L}_{i}\left(X_{i},{\bf y}_{i}\right)\in\mathbb{R}_{++},\label{eq:RD}
\end{flalign}
for all $t\in\mathbb{N}$. Also, $\Lambda_{-1}\equiv1$. Note that
we have slightly overloaded the definition of the $\Lambda_{i}$'s
and $\mathsf{L}_{i}$'s, compared to \eqref{eq:Changed_Expectation}.
But this is fine, since the term $\exp\left(-\left\Vert {\bf y}_{t}\right\Vert _{2}^{2}/2\right)$
is $\left\{ \mathscr{Y}_{t}\right\} $-adapted. Here, $\Lambda_{t}$,
\textit{as defined in \eqref{eq:RD}}, is interpreted precisely as
the restriction of the Radon-Nikodym derivative $\text{d}{\cal P}/\text{d}\widetilde{{\cal P}}$
on the filtration $\left\{ \mathscr{H}_{t}\right\} _{t\in\mathbb{N}}$,
generated by both $X_{t}$ (including $X_{-1}$ in $\mathscr{H}_{0}$)
and ${\bf y}_{t}$. That is,
\begin{flalign}
\left.\dfrac{\text{d}{\cal P}}{\text{d}\widetilde{{\cal P}}}\right|_{\mathscr{H}_{t}} & \equiv\Lambda_{t},\quad\forall t\in\mathbb{N}\cup\left\{ -1\right\} \quad\text{with}\\
1 & \equiv\Lambda_{-1}.
\end{flalign}
Observe that, for at $t\in\mathbb{N}$, $\mathscr{Y}_{t}\subset\mathscr{H}_{t}$
and, thus, \eqref{eq:CoM_basic} is a valid expression. In other words,
the Radon-Nikodym Theorem is applied accordingly on the measurable
space $\left(\Omega,\mathscr{H}_{t}\right)$, for each $t\in\mathbb{N}$.

However, because the base measures ${\cal P}$ and $\widetilde{{\cal P}}$
are \textit{equivalent on} $\mathscr{H}_{t}$ (that is, the one is
absolutely continuous with respect to the other), it is possible,
in exactly the same fashion as above, to ``start'' under ${\cal P}$
and express conditional expectations under $\widetilde{{\cal P}}$
via a \textbf{\textit{forward}} change of measure formula. In particular,
it is true that
\begin{equation}
\mathbb{E}_{\widetilde{{\cal P}}}\left\{ \left.X_{t}\right|\mathscr{Y}_{t}\right\} \equiv\dfrac{\mathbb{E}_{{\cal P}}\left\{ \left.X_{t}\Lambda_{t}^{-1}\right|\mathscr{Y}_{t}\right\} }{\mathbb{E}_{{\cal P}}\left\{ \left.\Lambda_{t}^{-1}\right|\mathscr{Y}_{t}\right\} },\quad\forall t\in\mathbb{N}\label{eq:CoM_Forward}
\end{equation}
where, as it is natural, this time we have
\begin{flalign}
\left.\dfrac{\text{d}\widetilde{{\cal P}}}{\text{d}{\cal P}}\right|_{\mathscr{H}_{t}} & \equiv\Lambda_{t}^{-1},\quad\forall t\in\mathbb{N}\cup\left\{ -1\right\} \quad\text{with}\\
1 & \equiv\Lambda_{-1}^{-1}.
\end{flalign}
From the above, one may realize that the ``mechanics'' of the change
of measure procedures (forward and reverse), at least in discrete
time, are very well structured and much simpler than they may initially
seem to be at a first glance. In more generality, it is true that
if $\mathscr{C}_{t}$ is a sub $\sigma$-algebra of $\mathscr{H}_{t}$
and for a $\left\{ \mathscr{H}_{t}\right\} $-adapted process $H_{t}$
\cite{Elliott1994Hidden,Elliott1994Exact,ElliottAggoun2004Measure},
\begin{equation}
\mathbb{E}_{\widetilde{{\cal P}}}\left\{ \left.H_{t}\right|\mathscr{C}_{t}\right\} \equiv\dfrac{\mathbb{E}_{{\cal P}}\left\{ \left.H_{t}\Lambda_{t}^{-1}\right|\mathscr{C}_{t}\right\} }{\mathbb{E}_{{\cal P}}\left\{ \left.\Lambda_{t}^{-1}\right|\mathscr{C}_{t}\right\} },\quad\forall t\in\mathbb{N}.
\end{equation}
And, of course, we can even evaluate (conditional) probabilities under
$\widetilde{{\cal P}}$ as
\begin{equation}
\widetilde{{\cal P}}\left(\left.H_{t}\in{\cal A}\right|\mathscr{C}_{t}\right)\equiv\mathbb{E}_{\widetilde{{\cal P}}}\left\{ \left.\mathds{1}_{\left\{ H_{t}\in{\cal A}\right\} }\right|\mathscr{C}_{t}\right\} \equiv\dfrac{\mathbb{E}_{{\cal P}}\left\{ \left.\mathds{1}_{\left\{ H_{t}\in{\cal A}\right\} }\Lambda_{t}^{-1}\right|\mathscr{C}_{t}\right\} }{\mathbb{E}_{{\cal P}}\left\{ \left.\Lambda_{t}^{-1}\right|\mathscr{C}_{t}\right\} },\quad\forall t\in\mathbb{N},
\end{equation}
for any Borel set ${\cal A}$.

Now, consider the process $X_{t}\equiv f\left(X_{t-1},W_{t}\right),t\in\mathbb{N}$.
As assumed throughout the paper, $X_{t}$ is Markov under ${\cal P}$,
with $W_{t}$ being a white noise (i.i.d.) innovations process. Also,
under $\widetilde{{\cal P}}$, $X_{t}$ is again Markov with exactly
the same dynamics, \textit{but independent of} ${\bf y}_{t}$. However,
at this point nothing is known regarding the nature of $W_{t}$ (distribution,
whiteness) and how it is related to $X_{-1}$ and ${\bf y}_{t}$.
The proof of the remarkable fact that, without any other modification,
$\widetilde{{\cal P}}$ may be chosen such that $W_{t}$ indeed satisfies
the aforementioned properties under question, follows.

\textit{Without changing the respective Radon-Nikodym derivatives
for either the forward or reverse change of measure formulas presented
above}, let us \textit{enlarge} the measurable space for which the
change of measure procedure is valid, by defining $\left\{ \mathscr{H}_{t}\right\} _{t\in\mathbb{N}}$
to be the joint filtration generated by, ${\bf y}_{t}$, the initial
condition $X_{-1}$ and the innovations process $W_{t}$ (Why enlarged?).
Our goal in the following will be to show the following, regarding
the base measure $\widetilde{{\cal P}}$, defined, for each $t\in\mathbb{N}$,
on the enlarged measurable space $\left(\Omega,\mathscr{H}_{t}\right)$:
\begin{enumerate}
\item First, we will show that, under $\widetilde{{\cal P}}$, the observations
process ${\bf y}_{t}$ is mutually independent of both $X_{-1}$ and
$W_{t}$ and therefore also independent of the state $X_{t}$.
\item Second, we will show that, under $\widetilde{{\cal P}}$, $W_{t}$
is white and identically distributed as as under ${\cal P}$ (in addition
to it being independent of ${\bf y}_{t}$ from (1)).
\item Third, we will show that, under $\widetilde{{\cal P}}$, $X_{t}$
is Markov with the same dynamics as under ${\cal P}$ (in addition
to it being independent of ${\bf y}_{t}$ from (1)). 
\end{enumerate}
In order to embark on the rigorous proof of the above, define, for
each $t\in\mathbb{N}$, the auxiliary $\sigma$-algebra $\mathscr{H}_{t}^{-}$,
generated by $\left\{ {\bf y}_{i}\right\} _{i\in\mathbb{N}_{t-1}},$
$X_{-1}$ and $\left\{ W_{i}\right\} _{i\in\mathbb{N}_{t}}$.

\noindent \textbf{1}. For any $\alpha\in\mathbb{R}^{N\times1}$, it
is true that (the ``$\le$'' operator is interpreted in the elementwise
sense)
\begin{flalign}
\widetilde{{\cal P}}\left({\bf y}_{t}\le\alpha\left|\mathscr{H}_{t}^{-}\right.\right) & \equiv\mathbb{E}_{\widetilde{{\cal P}}}\left\{ \left.\mathds{1}_{\left\{ {\bf y}_{t}\le\alpha\right\} }\right|\mathscr{H}_{t}^{-}\right\} \nonumber \\
 & \equiv\dfrac{\mathbb{E}_{{\cal P}}\left\{ \left.\mathds{1}_{\left\{ {\bf y}_{t}\le\alpha\right\} }\Lambda_{t}^{-1}\right|\mathscr{H}_{t}^{-}\right\} }{\mathbb{E}_{{\cal P}}\left\{ \left.\Lambda_{t}^{-1}\right|\mathscr{H}_{t}^{-}\right\} }\nonumber \\
 & =\dfrac{\mathbb{E}_{{\cal P}}\left\{ \left.\mathds{1}_{\left\{ {\bf y}_{t}\le\alpha\right\} }\mathsf{L}_{t}^{-1}\right|\mathscr{H}_{t}^{-}\right\} }{\mathbb{E}_{{\cal P}}\left\{ \left.\mathsf{L}_{t}^{-1}\right|\mathscr{H}_{t}^{-}\right\} },\quad\forall t\in\mathbb{N}.
\end{flalign}
Let us consider the denominator $\mathbb{E}_{{\cal P}}\left\{ \left.\mathsf{L}_{t}^{-1}\right|\mathscr{H}_{t}^{-}\right\} $.
We have
\begin{flalign}
\mathbb{E}_{{\cal P}}\left\{ \left.\mathsf{L}_{t}^{-1}\right|\mathscr{H}_{t}^{-}\right\}  & \equiv\mathbb{E}_{{\cal P}}\left\{ \left.\dfrac{{\cal N}\left({\bf y}_{t};{\bf 0},{\bf I}\right)}{{\cal N}\left({\bf y}_{t};\boldsymbol{\mu}_{t}\left(X_{t}\right),{\bf C}_{t}\left(X_{t}\right)\right)}\right|\mathscr{H}_{t}^{-}\right\} \nonumber \\
 & =\mathbb{E}_{{\cal P}}\left\{ \left.\dfrac{{\cal N}\left(\boldsymbol{\mu}_{t}\left(X_{t}\right)+\sqrt{{\bf C}_{t}\left(X_{t}\right)}\boldsymbol{u}_{t};{\bf 0},{\bf I}\right)}{{\cal N}\left(\boldsymbol{\mu}_{t}\left(X_{t}\right)+\sqrt{{\bf C}_{t}\left(X_{t}\right)}\boldsymbol{u}_{t};\boldsymbol{\mu}_{t}\left(X_{t}\right),{\bf C}_{t}\left(X_{t}\right)\right)}\right|\mathscr{H}_{t}^{-}\right\} ,
\end{flalign}
and given the facts that knowledge of $X_{-1}$ and $\left\{ W_{i}\right\} _{i\in\mathbb{N}_{t}}$
completely determines $\left\{ X_{i}\right\} _{i\in\mathbb{N}_{t}}$
and that the observations are conditionally independent given the
states $\left\{ X_{i}\right\} _{i\in\mathbb{N}_{t}}$, we get
\begin{equation}
\mathbb{E}_{{\cal P}}\left\{ \left.\mathsf{L}_{t}^{-1}\right|\mathscr{H}_{t}^{-}\right\} =\int\dfrac{{\cal N}\left({\bf y}_{t};{\bf 0},{\bf I}\right)}{{\cal N}\left({\bf y}_{t};\boldsymbol{\mu}_{t}\left(X_{t}\right),{\bf C}_{t}\left(X_{t}\right)\right)}{\cal N}\left({\bf y}_{t};\boldsymbol{\mu}_{t}\left(X_{t}\right),{\bf C}_{t}\left(X_{t}\right)\right)\text{d}{\bf y}_{t}\equiv1,\quad\forall t\in\mathbb{N}.
\end{equation}
Likewise, concerning the numerator $\mathbb{E}_{{\cal P}}\left\{ \left.\mathds{1}_{\left\{ {\bf y}_{t}\le\alpha\right\} }\mathsf{L}_{t}^{-1}\right|\mathscr{H}_{t}^{-}\right\} $,
it is true that
\begin{flalign}
\mathbb{E}_{{\cal P}}\left\{ \left.\mathds{1}_{\left\{ {\bf y}_{t}\le\alpha\right\} }\mathsf{L}_{t}^{-1}\right|\mathscr{H}_{t}^{-}\right\}  & \equiv\mathbb{E}_{{\cal P}}\left\{ \left.\mathds{1}_{\left\{ {\bf y}_{t}\le\alpha\right\} }\dfrac{{\cal N}\left({\bf y}_{t};{\bf 0},{\bf I}\right)}{{\cal N}\left({\bf y}_{t};\boldsymbol{\mu}_{t}\left(X_{t}\right),{\bf C}_{t}\left(X_{t}\right)\right)}\right|\mathscr{H}_{t}^{-}\right\} \nonumber \\
 & =\int\dfrac{\mathds{1}_{\left\{ {\bf y}_{t}\le\alpha\right\} }{\cal N}\left({\bf y}_{t};{\bf 0},{\bf I}\right)}{{\cal N}\left({\bf y}_{t};\boldsymbol{\mu}_{t}\left(X_{t}\right),{\bf C}_{t}\left(X_{t}\right)\right)}{\cal N}\left({\bf y}_{t};\boldsymbol{\mu}_{t}\left(X_{t}\right),{\bf C}_{t}\left(X_{t}\right)\right)\text{d}{\bf y}_{t}\nonumber \\
 & \equiv\int\mathds{1}_{\left\{ {\bf y}_{t}\le\alpha\right\} }{\cal N}\left({\bf y}_{t};{\bf 0},{\bf I}\right)\text{d}{\bf y}_{t},
\end{flalign}
or, equivalently,
\begin{equation}
\widetilde{{\cal P}}\left({\bf y}_{t}\le\alpha\left|\mathscr{H}_{t}^{-}\right.\right)\equiv\widetilde{{\cal P}}\left({\bf y}_{t}\le\alpha\right),\quad\forall t\in\mathbb{N}
\end{equation}
and for any $\alpha\in\mathbb{R}^{N\times1}$. Therefore, ${\bf y}_{t}$
is white standard normal under $\widetilde{{\cal P}}$ and, additionally,
mutually independent of $X_{-1}$ and $W_{t}$ and, therefore, mutually
independent of $X_{t}$, too.

\noindent \textbf{2}. Similarly, concerning the innovations process
$W_{t}$, for any $\alpha\in\mathbb{R}^{M_{W}\times1}$, it is true
that
\begin{align}
\widetilde{{\cal P}}\left(W_{t}\le\alpha\left|\mathscr{H}_{t-1}\right.\right) & \equiv\dfrac{\mathbb{E}_{{\cal P}}\left\{ \left.\mathds{1}_{\left\{ W_{t}\le\alpha\right\} }\mathsf{L}_{t}^{-1}\right|\mathscr{H}_{t-1}\right\} }{\mathbb{E}_{{\cal P}}\left\{ \left.\mathsf{L}_{t}^{-1}\right|\mathscr{H}_{t-1}\right\} },\quad\forall t\in\mathbb{N}.
\end{align}
In this case, for the denominator, we again have
\begin{align}
\mathbb{E}_{{\cal P}}\left\{ \left.\mathsf{L}_{t}^{-1}\right|\mathscr{H}_{t-1}\right\}  & \equiv\mathbb{E}_{{\cal P}}\left\{ \left.\dfrac{{\cal N}\left(\boldsymbol{\mu}_{t}\left(X_{t}\right)+\sqrt{{\bf C}_{t}\left(X_{t}\right)}\boldsymbol{u}_{t};{\bf 0},{\bf I}\right)}{{\cal N}\left(\boldsymbol{\mu}_{t}\left(X_{t}\right)+\sqrt{{\bf C}_{t}\left(X_{t}\right)}\boldsymbol{u}_{t};\boldsymbol{\mu}_{t}\left(X_{t}\right),{\bf C}_{t}\left(X_{t}\right)\right)}\right|\mathscr{H}_{t-1}\right\} ,
\end{align}
but because $X_{t}\equiv f\left(X_{t-1},W_{t}\right)$, knowledge
of $X_{-1}$ and $\left\{ W_{i}\right\} _{i\in\mathbb{N}_{t-1}}$
completely determines $\left\{ X_{i}\right\} _{i\in\mathbb{N}_{t-1}}$,
the processes $W_{t}$ and $\boldsymbol{u}_{t}$ are mutually independent
and since the random variable $W_{t}$ is independent of $\left\{ {\bf y}_{i}\right\} _{i\in\mathbb{N}_{t-1}^{+}}$,
we get
\begin{flalign}
\mathbb{E}_{{\cal P}}\left\{ \left.\mathsf{L}_{t}^{-1}\right|\mathscr{H}_{t-1}\right\}  & =\int_{W_{t}}\int_{\boldsymbol{u}_{t}}\dfrac{{\cal N}\left(\boldsymbol{\mu}_{t}\left(X_{t}\right)+\sqrt{{\bf C}_{t}\left(X_{t}\right)}\boldsymbol{u}_{t};{\bf 0},{\bf I}\right)}{{\cal N}\left(\boldsymbol{\mu}_{t}\left(X_{t}\right)+\sqrt{{\bf C}_{t}\left(X_{t}\right)}\boldsymbol{u}_{t};\boldsymbol{\mu}_{t}\left(X_{t}\right),{\bf C}_{t}\left(X_{t}\right)\right)}{\cal N}\left(\boldsymbol{u}_{t};{\bf 0},{\bf I}\right)\text{d}\boldsymbol{u}_{t}{\cal P}_{W_{t}}\left(\text{d}W_{t}\right)\nonumber \\
 & =\int_{W_{t}}\int_{\boldsymbol{u}_{t}}\sqrt{\det\left({\bf C}_{t}\left(X_{t}\right)\right)}{\cal N}\left(\boldsymbol{\mu}_{t}\left(X_{t}\right)+\sqrt{{\bf C}_{t}\left(X_{t}\right)}\boldsymbol{u}_{t};{\bf 0},{\bf I}\right)\text{d}\boldsymbol{u}_{t}{\cal P}_{W_{t}}\left(\text{d}W_{t}\right)\nonumber \\
 & \equiv\int_{W_{t}}\int_{\boldsymbol{u}_{t}}\det\left(\sqrt{{\bf C}_{t}\left(X_{t}\right)}\right){\cal N}\left(\boldsymbol{\mu}_{t}\left(X_{t}\right)+\sqrt{{\bf C}_{t}\left(X_{t}\right)}\boldsymbol{u}_{t};{\bf 0},{\bf I}\right)\text{d}\boldsymbol{u}_{t}{\cal P}_{W_{t}}\left(\text{d}W_{t}\right)\nonumber \\
 & =\int_{W_{t}}\int_{\boldsymbol{u}_{t}}{\cal N}\left(\boldsymbol{\mu}_{t}\left(X_{t}\right)+\sqrt{{\bf C}_{t}\left(X_{t}\right)}\boldsymbol{u}_{t};{\bf 0},{\bf I}\right)\text{d}\left[\boldsymbol{\mu}_{t}\left(X_{t}\right)+\sqrt{{\bf C}_{t}\left(X_{t}\right)}\boldsymbol{u}_{t}\right]{\cal P}_{W_{t}}\left(\text{d}W_{t}\right)\nonumber \\
 & \equiv\int_{W_{t}}{\cal P}_{W_{t}}\left(\text{d}W_{t}\right)\nonumber \\
 & \equiv1.
\end{flalign}
Likewise, the numerator can be expanded as
\begin{flalign}
 & \hspace{-2pt}\hspace{-2pt}\hspace{-2pt}\hspace{-2pt}\hspace{-2pt}\hspace{-2pt}\hspace{-2pt}\hspace{-2pt}\hspace{-2pt}\hspace{-2pt}\hspace{-2pt}\hspace{-2pt}\hspace{-2pt}\hspace{-2pt}\hspace{-2pt}\mathbb{E}_{{\cal P}}\left\{ \left.\mathds{1}_{\left\{ W_{t}\le\alpha\right\} }\mathsf{L}_{t}^{-1}\right|\mathscr{H}_{t-1}\right\} \nonumber \\
 & \equiv\int_{W_{t}}\mathds{1}_{\left\{ W_{t}\le\alpha\right\} }\int_{\boldsymbol{u}_{t}}\dfrac{{\cal N}\left(\boldsymbol{\mu}_{t}\left(X_{t}\right)+\sqrt{{\bf C}_{t}\left(X_{t}\right)}\boldsymbol{u}_{t};{\bf 0},{\bf I}\right){\cal N}\left(\boldsymbol{u}_{t};{\bf 0},{\bf I}\right)}{{\cal N}\left(\boldsymbol{\mu}_{t}\left(X_{t}\right)+\sqrt{{\bf C}_{t}\left(X_{t}\right)}\boldsymbol{u}_{t};\boldsymbol{\mu}_{t}\left(X_{t}\right),{\bf C}_{t}\left(X_{t}\right)\right)}\text{d}\boldsymbol{u}_{t}{\cal P}_{W_{t}}\left(\text{d}W_{t}\right)\nonumber \\
 & \equiv\int_{W_{t}}\mathds{1}_{\left\{ W_{t}\le\alpha\right\} }{\cal P}_{W_{t}}\left(\text{d}W_{t}\right),
\end{flalign}
or, equivalently,
\begin{equation}
\widetilde{{\cal P}}\left(W_{t}\le\alpha\left|\mathscr{H}_{t-1}\right.\right)\equiv\widetilde{{\cal P}}\left(W_{t}\le\alpha\right)\equiv{\cal P}\left(W_{t}\le\alpha\right),\quad\forall t\in\mathbb{N}
\end{equation}
and for any $\alpha\in\mathbb{R}^{N\times1}$. Therefore, $W_{t}$
is white under $\widetilde{{\cal P}}$, in addition to it being independent
of ${\bf y}_{t}$ and with the same distribution as under ${\cal P}$.

\noindent \textbf{3}. It suffices to show that, under $\widetilde{{\cal P}}$,
the initial condition $X_{-1}$ has the same distribution as under
${\cal P}$. If this is true, then, given all the above facts, under
$\widetilde{{\cal P}}$, the process $X_{t}\equiv f\left(X_{t-1},W_{t}\right),t\in\mathbb{N}$
is Markov with the same dynamics as under ${\cal P}$. Indeed, for
any $\alpha\in\mathbb{R}^{M\times1}$, it is trivially true that
\begin{flalign}
\widetilde{{\cal P}}\left(X_{-1}\le\alpha\right) & \equiv\widetilde{{\cal P}}\left(X_{-1}\le\alpha\left|\left\{ \emptyset,\Omega\right\} \right.\right)\nonumber \\
 & =\dfrac{\mathbb{E}_{{\cal P}}\left\{ \mathds{1}_{\left\{ X_{-1}\le\alpha\right\} }\Lambda_{t}^{-1}\right\} }{\mathbb{E}_{{\cal P}}\left\{ \Lambda_{t}^{-1}\right\} },\quad\forall t\in\mathbb{N}\cup\left\{ -1\right\} .
\end{flalign}
Simply, choose $t\equiv-1$. QED.\hfill{}\ensuremath{\blacksquare}

\bibliographystyle{ieeetr}
\bibliography{IEEEabrv}

\end{document}